%2multibyte Version: 5.50.0.2890 CodePage: 1253
%οΏ½οΏ½letter οΏ½οΏ½οΏ½\documentclass[letterpaper,12pt]{article}
%οΏ½οΏ½legal οΏ½οΏ½οΏ½\documentclass[legalpaper,12pt]{article} 
%\usepackage{lucidabr}%
%\usepackage{amsmath}
%\usepackage[all]{xy}%
%\newcommand{\disunion}{\mathbin{\rotatebox[origin=c]{90}{$\disunion$}}}
%\newenvironment{proof}[1][Proof]{\textbf{#1.}}{\ \rule{0.5em}{0.5em}}
%\newenvironment{proof}[1][Proof]{\textbf{#1.} }{\  \rule{0.5em}{0.5em}}
%,amsthm}
%\usepackage{amsmath,amsthm}
%\usepackage{amsmath,amsthm}%
%,amsthm}%
%\usepackage[pagewise]{lineno}\linenumbers
%\newenvironment{proof}[1][Proof]{\textbf{#1.} }{\  \rule{0.5em}{0.5em}}

\documentclass[12pt]{article}%
\usepackage{graphicx}
\usepackage{color} 
\usepackage{a4}
\usepackage{amsfonts}
\usepackage{amssymb}
\usepackage{xcolor}
\usepackage{amsmath,amsthm}
\usepackage{amsmath}%
\setcounter{MaxMatrixCols}{30}
%TCIDATA{OutputFilter=latex2.dll}
%TCIDATA{Version=5.50.0.2890}
%TCIDATA{Codepage=1253}
%TCIDATA{CSTFile=LaTeX article (bright).cst}
%TCIDATA{Created=Thu Sep 16 15:43:27 2004}
%TCIDATA{LastRevised=Thursday, August 23, 2018 17:30:04}
%TCIDATA{<META NAME="GraphicsSave" CONTENT="32">}
%TCIDATA{<META NAME="SaveForMode" CONTENT="1">}
%TCIDATA{BibliographyScheme=Manual}
%TCIDATA{<META NAME="DocumentShell" CONTENT="Journal Articles\Standard LaTeX Article">}
%TCIDATA{Language=American English}
%TCIDATA{ComputeDefs=
%$\sigma$
%}
%BeginMSIPreambleData
\providecommand{\U}[1]{\protect \rule{.1in}{.1in}}
%EndMSIPreambleData
\newtheorem{theorem}{Theorem}
\newtheorem*{lemmaa}{Lemma A}
\newtheorem*{lemmab}{Lemma B}
\newtheorem*{lemmac}{Lemma C}
\newtheorem*{theorema}{Theorem A}
\newtheorem*{theoremb}{Theorem B}

\newtheorem{corollary}[theorem]{Corollary}

\newtheorem{definition}[theorem]{Definition}

\newtheorem{lemma}[theorem]{Lemma}

\newtheorem{notation}[theorem]{Notation}

\newtheorem{proposition}[theorem]{Proposition}

\newtheorem{terminology}[theorem]{Terminology}

\begin{document} 

\title{On the Complex of Separating meridians in Handlebodies}
\author{Charalampos Charitos, Ioannis Papadoperakis, Georgios Tsapogas \\ 
Agricultural University of Athens}
\maketitle

\begin{abstract}
For a handlebody of genus $g\geq6$ it is shown that every automorphism of the
complex of separating meridians can be extended to an automorphism on the
complex of all meridians and, in consequence, it is geometric. \newline%
\textit{{2010 Mathematics Subject Classification:} 57M50, 53C22 }\newline
Keywords: Separating meridian, Handlebody, geometric automorphism, Medirian Complex.

\end{abstract}

\section{Definitions and statements of results}
For a compact surface $S,$ the complex of curves $\mathcal{C}\left(  S\right)
,$ introduced by Harvey in \cite{Har}, has vertices the isotopy classes of
essential, non-boundary-parallel simple closed curves in $S.$ A collection of
vertices spans a simplex exactly when any two of them may be represented by
disjoint curves. The complex of curves $\mathcal{C}\left(  S\right)$ as well as
several subcomplexes of $\mathcal{C}\left(  S\right)$ have played an important 
role in the study  of the mapping class group of $S.$  
Ivanov (see for example \cite{Iva}) was the first to prove that every automorphism
of  $\mathcal{C}\left(  S\right)$ is geometric, that is, it is induced by an element of
the mapping class group of $S.$

A prominent subcomplex 
of $\mathcal{C}\left(  S\right)$ is the subcomplex $\mathcal{SC}\left(  S\right)$ 
whose vertices are separating curves. See below, at the end of this section, for a discussion
concerning the subcomplex $\mathcal{SC}\left(  S\right)$ as well as motivation for this work.
The arc complex of $S,$ denoted by $\mathcal{A}\left(  S\right)  ,$ is defined
analogously, with curves replaced by arcs. The arc complex has been studied by
several authors (see \cite{Ha}, \cite{IrMC}, \cite{Iva}).

Similarly, for a $3-$manifold $M,$ the disk complex $\mathcal{M}\left(
M\right)  $ is defined by using the proper isotopy classes of compressing
disks for $M$ as the vertices. It was introduced in \cite{MC}, where it was
used in the study of mapping class groups of $3-$manifolds. In \cite{M-M}, it
was shown to be a quasi-convex subset of $\mathcal{C}\left(  \partial
M\right)  .$ By $H_{g}$ we denote the $3-$dimensional handlebody of genus $g.$
We regard $\mathcal{M}\left(  H_{g}\right)  $ as a subcomplex of
$\mathcal{C}\left(  \partial H_{g}\right)  .$

\begin{definition}
\label{protos} Let $\mathcal{SM}\left(  H_{g}\right)  $ be the simplicial
complex with vertex set being the isotopy classes of separating meridians in
$H_{g}.$ The $k-$simplices are given by collections of $k+1$ vertices having
disjoint representatives.
\end{definition}

Note that the dimension of $\mathcal{SM}\left(  H_{g}\right)  $ is $2g-4.$
Hence, for $g=2$ the complex $\mathcal{SM}\left(  H_{g}\right)  $ is just an
infinite set of vertices so we assume that $g\geq3.$ It is well-known that
$\mathcal{SM}\left(  H_{g}\right)  $ is connected for $g\geq3.$ We will not
need this result in the sequel, however, an easy proof of this result can be
obtained by the general technique presented in \cite[Lemma 2.1]{Put}
considering the action of the mapping class group of $H_{g}$ on $\mathcal{SM}%
\left(  H_{g}\right)  $ and using the specific set of generators for the
mapping class group of $H_{g}$ given in \cite[Corollary 3.4, page 99]{FoMa}.
If $D$ is a separating meridian in $H_{g},$ the vertex containing $D,$ i.e.
the isotopy class containing $D$ will again be denoted by $D.$

The aim of this paper is to show

\begin{theorem}
\label{fi_ext}Every automorphism of $\mathcal{SM}\left(  H_{g}\right)  $
extends uniquely to an automorphism of the complex of all meridians
$\mathcal{M}\left(  H_{g}\right)  ,$ provided that $g\geq6.$
\end{theorem}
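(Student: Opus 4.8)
The plan is to reconstruct the non-separating meridians, together with all their incidences, purely from the combinatorics of $\mathcal{SM}\left( H_{g}\right) $, so that an arbitrary automorphism $\phi$ of $\mathcal{SM}\left( H_{g}\right) $ is forced to permute them compatibly with disjointness, and then to declare the resulting permutation to be the extension. The key is to find a combinatorial fingerprint for each non-separating meridian inside $\mathcal{SM}\left( H_{g}\right) $ and to show $\phi$ respects it.

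First I would recover the topological type of each separating meridian. If $E$ cuts $H_{g}$ into handlebodies of genus $p$ and $g-p$, then any separating meridian disjoint from $E$ lies entirely in one of the two pieces, and a solid torus carries no separating meridian; hence the link $\mathrm{Lk}\left( E\right) $ in $\mathcal{SM}\left( H_{g}\right) $ is the simplicial join $\mathcal{SM}\left( H_{p}\right) \ast \mathcal{SM}\left( H_{g-p}\right) $ when $p,g-p\ge 2$, and equals $\mathcal{SM}\left( H_{g-1}\right) $ (one empty factor) when $p=1$. Since $g\ge 6$ gives $g-1\ge 3$, the intersection graph of $\mathcal{SM}\left( H_{g-1}\right) $ is connected, so $\mathcal{SM}\left( H_{g-1}\right) $ is \emph{not} a nontrivial join. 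Therefore $E$ cuts off a solid torus if and only if $\mathrm{Lk}\left( E\right) $ is join--irreducible. As any complex isomorphism carries links to links and respects the unique maximal join decomposition, $\phi$ preserves the class of genus--one separating meridians, and in fact preserves each type $\left( p,g-p\right) $ since the factor dimensions $2p-4$ and $2\left( g-p\right) -4$ recover $\left\{ p,g-p\right\} $.

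Next, to a genus--one separating meridian $E$ I attach its \emph{core} $\mu\left( E\right) $, the unique meridian of the solid torus it bounds; this is a non-separating meridian, and every non-separating meridian is $\mu\left( E\right) $ for many $E$. Two disjoint genus--one meridians have disjoint and distinct cores (their solid tori are disjoint), so $\mu\left( E\right) =\mu\left( E^{\prime}\right) $ forces $E,E^{\prime}$ to be pairwise nonadjacent in $\mathcal{SM}\left( H_{g}\right) $; thus each fibre of $\mu$ is an independent set whose members mutually intersect. The central step is to characterize these fibres \emph{intrinsically}: I would show $\mu\left( E\right) =\mu\left( E^{\prime}\right) $ precisely when $E$ and $E^{\prime}$ are linked in a prescribed pattern — concretely, that there is a type $\left( 2,g-2\right) $ meridian $G$ whose genus--two side $U$ contains both $E$ and $E^{\prime}$ as genus--one meridians sharing a core, a condition that inside the fixed handlebody $U$ reduces to a bounded, explicitly recognizable configuration of separating meridians. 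Granting this, the relation $\mu\left( E\right) =\mu\left( E^{\prime}\right) $ is $\phi$--invariant, so $\phi$ descends through $\mu$ to a well-defined bijection $\bar{\phi}$ of the set of non-separating meridians.

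Finally I would set $\widehat{\phi}$ to agree with $\phi$ on separating meridians and with $\bar{\phi}$ on non-separating ones, and verify that $\widehat{\phi}$ is a simplicial automorphism of $\mathcal{M}\left( H_{g}\right) $. The only substantive point is that disjointness is preserved for every pair, including pairs involving non-separating meridians; this is again encoded by realizing disjoint cores through disjoint genus--one separating meridians, where the room afforded by $g\ge 6$ lets me place the auxiliary meridians in complementary pieces of genus at least $3$ without forced intersections. Uniqueness is automatic, since each non-separating meridian is a $\mu$--image and $\widehat{\phi}$ is thereby determined by $\phi$. I expect the fibre characterization of $\mu$ — pinning down in purely link/join--theoretic terms exactly when two intersecting genus--one meridians share a core — to be the main obstacle, and the hypothesis $g\ge 6$ to enter precisely in guaranteeing that the complementary handlebodies used there and in the disjointness verification have genus large enough for their separating--meridian complexes to be connected and join--irreducible.
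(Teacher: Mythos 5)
Your overall skeleton is the same as the paper's: recover the type $\left(p,g-p\right)$ of a separating meridian from the join (splitting) structure of its link (the paper's Lemma \ref{preserves} --- note, though, that the factors are complexes of \emph{spotted} handlebodies, of dimensions $2p-3$ and $2\left(g-p\right)-3$, not $2p-4$); attach to each genus-one meridian $E$ its core $\mu\left(E\right)$, written $\delta\left(E\right)$ in the paper; extend $\phi$ by declaring $\widehat{\phi}\left(\delta\left(E\right)\right):=\delta\left(\phi\left(E\right)\right)$; and reduce everything to showing that the relation $\delta\left(E\right)=\delta\left(E'\right)$ is $\phi$-invariant (the paper's Theorem \ref{maindelta}). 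The genuine gap is in the step you yourself call central. Your proposed fingerprint --- $\delta\left(E\right)=\delta\left(E'\right)$ if and only if some $\left(2,g-2\right)$-meridian $G$ has both $E$ and $E'$ in its genus-two side --- is false in the forward direction. Write $E=D\cup_{\sigma}D$ and $E'=D\cup_{\sigma'}D$, where $\sigma,\sigma'$ are simple closed curves in $\partial H_{g}$ meeting $\partial D$ once. If $E,E'\subset H_{2}\left(G\right)$, then both solid tori $H_{1}\left(E\right),H_{1}\left(E'\right)$ lie in $H_{2}\left(G\right)$, so the subsurface of $\partial H_{g}$ filled by $\partial D\cup\sigma\cup\sigma'$ must embed in the genus-two surface $\partial H_{2}\left(G\right)$. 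But taking $E'=h\left(E\right)$ for a homeomorphism $h$ of $H_{g}$ fixing $D$ (say, a suitable product of twists about meridians disjoint from $D$), one obtains pairs with $\delta\left(E\right)=\delta\left(E'\right)=D$ for which $\partial D\cup\sigma\cup\sigma'$ fills a subsurface of arbitrarily large genus, so no such $G$ exists. Hence ``sharing a core'' is only the \emph{transitive closure} of your local relation, and the real work is to prove that finitely many local moves connect any two genus-one meridians with the same core. That is exactly what the paper's machinery does and your proposal omits: reduction of intersections with a fixed separating cut system by stripe eliminations (Lemma \ref{lemmadelta}), then Wajnryb's connectivity of the cut-system complex (Theorem B) together with Lemma \ref{lemmadeltaadjacent} to pass between arbitrary cut systems.

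Moreover, even the local step you defer --- that $\phi$ respects shared cores for two genus-one meridians inside a fixed genus-two handlebody --- is not a routine check of ``a bounded, explicitly recognizable configuration''; it is Proposition \ref{mainlemmaH2}, the technical heart of the paper. Its proof requires Lemma \ref{presrvesNUMintersection} (images of joined meridians are joined along arcs with the same intersection pattern; this is where the hypothesis $g\geq6$ actually enters), the construction of an induced automorphism of the arc complex of $\Sigma_{2,2}$ (Lemma \ref{extensionF}), the Irmak--McCarthy theorem that arc-complex automorphisms are geometric, and finally a $\pi_{1}$-commutator argument showing that the image of the core bounds a disk. Nothing in your proposal substitutes for any of this. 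Two lesser gaps: in checking that $\widehat{\phi}$ is simplicial you treat only disjointness of two non-separating meridians, whereas the mixed case of a non-separating meridian against a separating one requires its own argument (the paper's (\ref{oneseponenon})); and uniqueness is not ``automatic'' --- one must show that \emph{any} simplicial extension agreeing with $\phi$ on $\mathcal{SM}\left(H_{g}\right)$ is forced to send $D$ to $\delta\left(\phi\left(Z\right)\right)$, which the paper proves by a short contradiction argument.
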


It is shown in \cite{KS} that the automorphism group of $\mathcal{M}\left(
H_{g}\right)  $ is isomorphic to the mapping class group of $H_{g}.$ In
particular, every automorphism of $\mathcal{M}\left(  H_{g}\right)  $ is
geometric. Thus, we have

\begin{corollary}
\label{geometric_f}Every automorphism of $\mathcal{SM}\left(  H_{g}\right)  $
is geometric i.e., it is induced by an element of the mapping class group of
$H_{g},$ provided that $g\geq6.$
\end{corollary}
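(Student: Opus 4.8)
To prove the corollary it suffices to prove Theorem~\ref{fi_ext}, since the rest is formal: given an automorphism $\phi$ of $\mathcal{SM}(H_g)$, Theorem~\ref{fi_ext} produces an automorphism $\tilde\phi$ of $\mathcal{M}(H_g)$ restricting to $\phi$, and by \cite{KS} we have $\mathrm{Aut}(\mathcal{M}(H_g))\cong\mathrm{MCG}(H_g)$, so $\tilde\phi$ is induced by a mapping class $f$; as $\mathcal{SM}(H_g)\subset\mathcal{M}(H_g)$ and $\tilde\phi|_{\mathcal{SM}(H_g)}=\phi$, the class $f$ induces $\phi$, which is the assertion. Thus the whole weight is on the extension theorem, and the plan is to reconstruct the non-separating meridians (the vertices of $\mathcal{M}(H_g)$ not lying in $\mathcal{SM}(H_g)$) purely from the combinatorics of $\mathcal{SM}(H_g)$.

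The first step is to recover topological type combinatorially. A separating meridian $S$ of type $\{h,g-h\}$ cuts $H_g$ into handlebodies $A,B$ of genera $h$ and $g-h$; a separating meridian disjoint from $S$ lies in $A$ or in $B$, and meridians on opposite sides are automatically disjoint, so the link of $S$ in $\mathcal{SM}(H_g)$ is a join $L_A * L_B$, where $L_A,L_B$ are the separating-meridian complexes of the two pieces. Since a solid torus carries no essential separating meridian (the dimension formula $2g-4$ gives the empty complex for genus $1$), a meridian $S$ is of type $\{1,g-1\}$ precisely when one join factor is empty, i.e. when its link is \emph{not} a nontrivial join; for $g\ge 6$ the factor $L_B$ of a genus-$(g-1)$ piece is itself join-irreducible, so this characterization is unambiguous and hence preserved by $\phi$. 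For each genus-$1$ separating meridian $S$ let $D(S)$ denote the unique meridian of the solid-torus side of $S$, a non-separating meridian of $H_g$. Because $\phi$ preserves genus-$1$ type, $D(\phi(S))$ is again a well-defined non-separating meridian.

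The technical heart — and the step I expect to be the main obstacle — is a combinatorial characterization of the relation ``$D(S)=D(S')$'' between genus-$1$ separating meridians, i.e. of when the two solid-torus sides have isotopic meridian disks in $H_g$. I would establish this by realizing the relation through configurations of separating meridians of controlled type: typically one exhibits a genus-$2$ (or genus-$3$) separating meridian $T$ whose small-genus side contains both $S$ and $S'$ and inside which $S,S'$ become ``parallel'' in a way detectable from disjointness patterns in $\mathcal{SM}(H_g)$, reducing the question to a local model of small genus while keeping the complementary genus large enough that the type-detection of the previous step still applies. Once this relation is shown to be combinatorial (hence invariant under $\phi$), the formula $\tilde\phi(D):=D(\phi(S))$, for any genus-$1$ separating meridian $S$ with $D(S)=D$, is well defined on non-separating meridians, and every non-separating meridian arises as some $D(S)$, so $\tilde\phi$ is defined on all of $\mathcal{M}(H_g)$ and restricts to $\phi$.

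It then remains to check that $\tilde\phi$ is a simplicial automorphism and is unique. Simpliciality I would verify case by case: the separating/separating case is $\phi$ itself; for a separating $S$ disjoint from a non-separating $D=D(S_0)$ one chooses $S_0$ disjoint from $S$, so $\phi(S),\phi(S_0)$ are disjoint and the solid-torus meridian $D(\phi(S_0))$ is again off $\phi(S)$; for two disjoint non-separating meridians $D_1,D_2$ one realizes them as $D(S_1),D(S_2)$ with $S_1,S_2$ disjoint (thickening $D_1,D_2$ along disjoint arcs, which the bound $g\ge 6$ guarantees is possible), whence $\phi(S_1),\phi(S_2)$ are disjoint and so are their solid-torus meridians. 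Bijectivity follows by running the construction for $\phi^{-1}$, and uniqueness follows because in $\mathcal{M}(H_g)$ the meridian $D(S)$ is the unique non-separating meridian determined by its incidences with the separating meridians around $S$; hence any automorphism of $\mathcal{M}(H_g)$ extending $\phi$ is forced to agree with $\tilde\phi$. The genus hypothesis $g\ge 6$ is what makes the join-based type detection, the characterization of $D(S)=D(S')$, and the disjoint-realization arguments simultaneously available.
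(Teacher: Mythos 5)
Your opening reduction is exactly the paper's proof of this corollary: Theorem \ref{fi_ext} together with the Korkmaz--Schleimer theorem \cite{KS} that every automorphism of $\mathcal{M}(H_g)$ is geometric, plus the observation that the extension restricts to $\phi$ on the subcomplex $\mathcal{SM}(H_g)$. Had you stopped there, citing Theorem \ref{fi_ext} as given, the proposal would be correct and identical in approach to the paper.

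You instead undertake to prove Theorem \ref{fi_ext}, and there the proposal has a genuine gap, located exactly at the step you yourself call the technical heart. Your skeleton (detecting $(1,g-1)$-type by non-splitting of links, setting $\tilde\phi(D):=D(\phi(S))$, checking simpliciality and uniqueness) does track the paper's Lemma \ref{preserves} and its final Proposition. But the well-definedness of $\tilde\phi$, i.e. the invariance under $\phi$ of the relation $D(S)=D(S')$, is never actually argued: saying one "exhibits a genus-$2$ separating meridian whose small-genus side contains both $S$ and $S'$, inside which they become parallel in a way detectable from disjointness patterns" restates the problem rather than solving it. This invariance is Theorem \ref{maindelta}, and establishing it consumes essentially the whole paper: first Theorem A (via Korkmaz's theorem \cite{Kor} on $Aut\left(\mathcal{C}\left(S_{0,n}\right)\right)$) shows $\phi$ acts geometrically on separating meridians disjoint from a separating cut system; then the intersection-number preservation Lemma \ref{presrvesNUMintersection} (this is where $g\geq6$ genuinely enters); these feed into the construction of an induced automorphism of the arc complex of $\Sigma_{2,2}=\partial H_{2}\left(A,B\right)$, which is geometric by Irmak--McCarthy \cite{IrMC}, and a $\pi_{1}$-commutator computation then yields the local model, Proposition \ref{mainlemmaH2}. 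Moreover, even granting that local model, your reduction to it is also missing: two genus-one meridians with $D(S)=D(S')$ may intersect each other arbitrarily badly and need not lie in a common genus-two piece, which is why the paper must pass through cut systems and stripe eliminations (Lemma \ref{lemmadelta}) and invoke Wajnryb's connectivity of the cut-system complex \cite{Wa} (Lemma \ref{lemmadeltaadjacent}). Two smaller inaccuracies: realizing disjoint non-separating meridians as $\delta$ of disjoint genus-one meridians needs no hypothesis $g\geq6$; and uniqueness does not follow because "$D(S)$ is determined by its incidences" in any obvious sense (a non-separating meridian disjoint from $S$ need not equal $D(S)$, since it may lie in the genus $g-1$ side) --- the paper forces agreement with an explicit curve $\alpha$ meeting $\phi(D)$ once and the meridian $\phi\left(D\right)\cup_{\alpha}\phi\left(D\right)$.
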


Moreover, we have

\begin{corollary}
\label{karamain} The mapping class group of $H_{g}$ is isomorphic to the
automorphism group of $\mathcal{SM}\left(  H_{g}\right)  ,$ provided that
$g\geq6.$
\end{corollary}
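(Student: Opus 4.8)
The plan is to exhibit the natural representation of the mapping class group on $\mathcal{SM}\left(H_g\right)$ and prove it is an isomorphism by combining Theorem~\ref{fi_ext} with the identification of the automorphism group of $\mathcal{M}\left(H_g\right)$ obtained in \cite{KS}. First I would observe that every self-homeomorphism of $H_g$ carries separating meridians to separating meridians and preserves disjointness of their isotopy classes, hence induces a simplicial automorphism of $\mathcal{SM}\left(H_g\right)$; since isotopic homeomorphisms induce the same automorphism and composition is respected, this gives a homomorphism
$$\Psi\colon \func{MCG}\left(H_g\right)\longrightarrow \func{Aut}\mathcal{SM}\left(H_g\right).$$
The entire statement then reduces to proving that $\Psi$ is bijective.

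Surjectivity is immediate from Corollary~\ref{geometric_f}: every automorphism of $\mathcal{SM}\left(H_g\right)$ is geometric, i.e. lies in the image of $\Psi$. For injectivity I would take $\phi\in\func{MCG}\left(H_g\right)$ with $\Psi(\phi)=\mathrm{id}$ and let $\Phi$ be the automorphism of the full disk complex $\mathcal{M}\left(H_g\right)$ induced by $\phi$. Because $\phi$ fixes the isotopy class of every separating meridian, $\Phi$ restricts to the identity on the subcomplex $\mathcal{SM}\left(H_g\right)$, so $\Phi$ is an extension of $\mathrm{id}_{\mathcal{SM}(H_g)}$ to $\mathcal{M}\left(H_g\right)$; the identity of $\mathcal{M}\left(H_g\right)$ is a second such extension. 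By the uniqueness clause of Theorem~\ref{fi_ext} these extensions coincide, whence $\Phi=\mathrm{id}$. Since the natural homomorphism $\func{MCG}\left(H_g\right)\to\func{Aut}\mathcal{M}\left(H_g\right)$ is an isomorphism by \cite{KS}, and in particular injective, we conclude $\phi=\mathrm{id}$. Thus $\Psi$ is an isomorphism, which is the assertion of the corollary.

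Equivalently, and perhaps more transparently, one may package Theorem~\ref{fi_ext} as an extension map $E\colon \func{Aut}\mathcal{SM}\left(H_g\right)\to\func{Aut}\mathcal{M}\left(H_g\right)$; uniqueness of the extension shows $E$ is a homomorphism and that $E$ is injective, while the relation $E\circ\Psi$ equals the \cite{KS} isomorphism $\func{MCG}\left(H_g\right)\to\func{Aut}\mathcal{M}\left(H_g\right)$ follows because the $\mathcal{M}$-automorphism induced by a mapping class restricts to its $\mathcal{SM}$-automorphism. A short diagram chase then yields that $\Psi$ is an isomorphism. I do not expect a genuine obstacle here: the substantive difficulty, namely extending an automorphism of $\mathcal{SM}\left(H_g\right)$ across the whole meridian complex, has already been discharged by Theorem~\ref{fi_ext}. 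The only points demanding care are the verification that $\Psi$ is a well-defined homomorphism and the confirmation that the uniqueness clause applies to the identity extension and is compatible with the restriction map from \cite{KS}; both are routine once the main theorem is in hand.
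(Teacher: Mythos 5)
Your proposal is correct and follows exactly the route the paper intends: surjectivity of the natural map $\func{MCG}(H_g)\to\func{Aut}\,\mathcal{SM}(H_g)$ from Corollary \ref{geometric_f}, and injectivity from the uniqueness clause of Theorem \ref{fi_ext} combined with the isomorphism $\func{MCG}(H_g)\cong\func{Aut}\,\mathcal{M}(H_g)$ of \cite{KS}. The paper compresses this into a single sentence, so your write-up simply supplies the details (well-definedness of $\Psi$ and the identity-extension argument) that the authors leave implicit.
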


The proof of Theorem \ref{fi_ext} is based on the following basic property
(see Theorem \ref{maindelta}) of genus $1$ separating meridians preserved by
automorphisms of $\mathcal{SM}\left(  H_{g}\right)  $: if two genus
$1-$meridians have common non-separating meridian, so do their images under an
automorphism of $\mathcal{SM}\left(  H_{g}\right)  .$ This property allows the
extension of an automorphism of $\mathcal{SM}\left(  H_{g}\right)  $ to the
whole complex of meridians.

In Section \ref{properties_of_autom} we introduce terminology and prove that
certain intersection properties between separating meridians are preserved by
the automorphisms of $\mathcal{SM}\left(  H_{g}\right)  .$

In Section \ref{arc_auto} we restrict our attention inside a genus $2$
handlebody with $2$ spots on the boundary and we show (see Proposition
\ref{extensionF}) that an automorphism $\phi$ of $\mathcal{SM}\left(
H_{g}\right)  $ induces an automorphism on the arc complex of the boundary
surface of type $(2,2)$ which is known to be geometric (see \cite{IrMC} where
it is shown that the mapping class group of the boundary surface is isomorphic
to the group of automorphisms of the arc complex). Using this, we show the
above mentioned basic property for genus $1-$meridians which live inside a
genus $2$ handlebody with $2$ spots on the boundary. This is the point where 
the assumption $g\geq 6$ is required.

In Section \ref{terminal_extension} we show that the basic property holds in
the case where the two genus $1-$meridians live in the complement of a cut
system, that is, a maximal collection of pairwise disjoint non-separating
meridians which split the handlebody $H_{g}$ to a $3-$ball. Finally, we extend
the property to arbitrary genus $1-$meridians by using the fact that the
complex of cut systems is connected (see \cite{Wa}).

The motivation for this work comes from \cite{[BM]}. In that paper $S$
is a closed surface of genus $g\geq 4.$ The  Torelli subgroup $I(S)$  
of the extended mapping class group $Mod(S)$
of $S$ acts, by definition, trivially on the first homology group 
$H=H_{1}(S,Z)$ of $S.$ Let $K=K(S)$ be the subgroup of $I(S)$ which is
generated by Dehn twists about separating curves. The group $K$ arises
naturally as the kernel of the Johnson map from $I(S)$ to $\wedge ^{3}(H)/H$
(see \cite{[John]}) and it is related to the Heegaard splittings of the
homology 3-sphere (the reader may find additional information in \cite{[BM]}, 
as well as, in \cite{[FM]}). More precisely, in \cite{[BM]} the
commensurator group $Comm(K)$ is considered and, based on ideas of Farb and
Ivanov \cite{[FI]} which have proved a similar result for $I(S),$ the authors
proved that $Comm(K)\cong Aut(K)\cong Mod(S),$ (see Main Theorem 1 in 
\cite{[BM]}) and deduce significant corollaries. The proof of Main Theorem 1 in 
\cite{[BM]}  deploys a reformulation of group theoretic statements in terms
of maps of curve complexes and a basic step is Theorem 1.5 of \cite{[BM]}
which asserts that the automorphism of the complex $SC(S)$ of separating
curves of $S$ is isomorphic to $Mod(S).$ 

On the other hand, in \cite{[Hensel]} the mapping class group $Mod(H_{g})$
of a 3-dimensional handlebody $H_{g}$ of genus $g$ is studied in comparison
with the mapping class group of surfaces. The boundary surface $S=\partial
H_{g}$ is considered and many similarities and differences of $Mod(H_{g})$
are presented with respect to $Mod(S).$ An important topic commented in \cite%
{[Hensel]} is the action of $Mod(H_{g})$ on the first homology group of $%
H_{g}.$ Thus, similarly to the discussion above, the Torelli group $I(H_{g})$
and the group $K(H_{g})$ generated by Dehn twists about separating meridians
can be considered. In this frame, Corollary 4 above can be useful, as in
the case of surfaces, in an effort to relate $K(H_{g})$ with $Aut(K(H_{g}))$
and $Mod(H_{g}).$

\section{Properties of Automorphisms of $\mathcal{SM}\left(  H_{g}\right)  $
\label{properties_of_autom}}

We first give definitions and notation. Throughout this section $\phi$ will
denote an arbitrary automorphism of $\mathcal{SM}\left(  H_{g}\right)  .$ All
intersections between arcs and disks are assumed transverse and minimal. 
Arcs will always be in the boundary surface of the handlebody and disks will 
always be properly embedded. In particular, the boundary of a disk will be considered as 
a simple closed curve in the boundary surface of the handlebody.

By the absolute value $\lvert \cdot \rvert$ of an intersection between arcs and disks  
we will mean the number of components
which, in the case of an intersection between disks, it is a number of arcs and, 
in the case of an intersection between arcs, it is a number of singletons. 
By the intersection between an arc and a disk we will mean the intersection of 
the arc with the boundary curve of the disk.

\begin{definition}
Let $D$ be a separating meridian splitting $H_{g}$ into two handlebodies of
genus, say, $k$ and $g-k$ where $1\leq k\leq g-1.$ Such a separating meridian
will be called a $\left(  k,g-k\right)  -$meridian and the handlebodies will
be called the genus $k$ and $g-k$ components of $D$ and will be denoted by
$H_{k}\left(  D\right)  $ and $H_{g-k}\left(  D\right)  $
respectively.\newline For each $\left(  1,g-1\right)  -$meridian $D,$ the
genus $1$ handlebody $H_{1}\left(  D\right)  $ contains a unique
non-separating meridian disjoint from $D$ which will be denoted by
$\delta \left(  D\right)  .$\newline Let $D_{i}, i=1,2$ be disjoint
separating meridians of type $\left(  k_{i},g-k_{i}\right)  $
and let $\tau$ be an arc properly
embedded in $\partial H_{g}$ with one endpoint in $\partial D_{1}$ and the
other in $\partial D_{2}.$ By the union $D_{1}\cup_{\tau}D_{2}$ of $D_{1}$ and
$D_{2}$ along $\tau$ we mean the separating meridian whose boundary is
obtained by joining $\partial D_{1},\partial D_{2}$ along the arc $\tau$ where
it is implicit that the interior of $\tau$ is disjoint from both $\partial
D_{1},\partial D_{2}.$ We also say that $D_{1}\cup_{\tau}D_{2}$ is the
meridian obtained by joining $D_{1},D_{2}$ along $\tau.$\newline Let $A,B$ be
two disjoint separating meridians. Then $H_{g}\setminus \left(  A\cup B\right)
$ consists of 3 components: a handlebody $H_{k}\left(  A\right)  $ of genus
$k$ with one spot, namely $A,$ a handlebody $H_{k^{\prime}}\left(  B\right)  $
of genus $k^{\prime}$ with one spot, namely $B,$ and a genus $m=g-k-k^{\prime
}$ handlebody bounded by $A$ and $B,$ denoted by $H_{m}\left(  A,B\right)  .$
\end{definition}

We will also use the notion of the sum of two flag complexes. Recall that a
complex $K$ is \textit{a flag complex} if the following property holds: if
$\left \{  v_{0},\ldots,v_{n}\right \}  $ is a set of vertices with the property
the edge $\left(  v_{i},v_{j}\right)  $ exists for all $0\leq i,j\leq n,$ then
$\left \{  v_{0},\ldots,v_{n}\right \}  $ is a simplex. Observe that
$\mathcal{M}\left(  H_{g}\right)  $ as well as $\mathcal{SM}\left(
H_{g}\right)  $ are flag complexes.

If $K,L$ are simplicial complexes we will write $K\oplus L$ to denote the
(flag) complex defined as follows:

\begin{itemize}
\item[(1)] the vertices of $K\oplus L$ is the union of the vertices of $K$ and
the vertices of $L.$

\item[(2)] for every $k-$simplex $\left \{  v_{0},\ldots,v_{k}\right \}  ,$
$k\geq0$ in $K$ and $l-$simplex $\left \{  w_{0},\ldots,w_{l}\right \}  ,$
$l\geq0$ in $L$ there exists a $\left(  k+l+1\right)  -$simplex $\left \{
v_{0},\ldots,v_{k},w_{0},\ldots,w_{l}\right \}  $ in $K\oplus L.$ In other
words, by (1) and (2) the vertices and the edges are defined and then we
require $K\oplus L$ to be the (unique) flag complex generated by these.
\end{itemize}

\begin{definition}
We will say that a complex $M$ splits, equivalently $M$ admits a splitting, if
there exist subcomplexes $K,L$ such that $M=K\oplus L.$
\end{definition}

\begin{lemma}
\label{preserves}Let $\phi:\mathcal{SM}\left(  H_{g}\right)  \rightarrow
\mathcal{SM}\left(  H_{g}\right)  $ be an automorphism. \newline \textbf{(a)}
$D$ is a $\left(  k,g-k\right)  -$meridian in $H_{g},$ $1<k<g-1$ if and only
if the link $Lk\left(  D\right)  $ of $D$ in $\mathcal{SM}\left(
H_{g}\right)  $ splits as $K\oplus L$ where $\dim K=2k-3,$ $\dim L=2\left(
g-k\right)  -3$ and each of $K,L$ does not split.\newline$D$ is a $\left(
1,g-1\right)  -$meridian in $H_{g}$ if and only if $Lk\left(  D\right)  $ does
not split. \newline \textbf{(b)} If $D$ is a $\left(  k,g-k\right)  -$meridian
in $H_{g},0<k<g,$ then $\phi \left(  D\right)  $ is also a $\left(
k,g-k\right)  -$meridian.\newline \textbf{(c)} Let $D$ be a $\left(
k,g-k\right)  -$meridian in $H_{g},$ with $1<k<g-1$ with splitting $Lk\left(
D\right)  =K\oplus L.$ If $E,F\in K$ then $\phi \left(  E\right)  ,\phi \left(
F\right)  $ belong to the same summand of $Lk\left(  \phi \left(  D\right)
\right)  .$ Moreover, $Lk\left(  \phi \left(  D\right)  \right)  =\phi \left(
K\right)  \oplus \phi \left(  L\right)  .$
\end{lemma}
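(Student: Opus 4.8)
The plan is to read the type $(k,g-k)$ of a separating meridian $D$ off the join structure of its link, and then to transport that structure through $\phi$, which is a simplicial isomorphism. First I would identify $Lk(D)$ explicitly. A separating meridian $E$ disjoint from $D$ has boundary disjoint from $\partial D$, hence lies in exactly one of the two components $H_k(D)$, $H_{g-k}(D)$; conversely every separating meridian of $H_g$ contained in one component is disjoint from $D$. Writing $K$ and $L$ for the subcomplexes spanned by the vertices lying in $H_k(D)$ and in $H_{g-k}(D)$, any $E\in K$ and any $F\in L$ are disjoint, so they span an edge, and since $\mathcal{SM}(H_g)$ is a flag complex this forces $Lk(D)=K\oplus L$. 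Moreover a curve carried by $H_k(D)$ (a genus $k$ handlebody bearing the extra spot $D$) is separating in $H_g$ if and only if it is separating in that component, so $K$ is exactly the complex of separating meridians of the spotted handlebody $H_k(D)$, and similarly for $L$. By the same kind of decomposition argument that yields $\dim\mathcal{SM}(H_g)=2g-4$, but with one extra separating meridian available because of the spot $D$, a maximal collection of disjoint separating meridians in $H_k(D)$ has $2k-2$ members, so $\dim K=2k-3$ and likewise $\dim L=2(g-k)-3$. In particular, when $k=1$ the component $H_1(D)$ is a genus-$1$ handlebody with one spot, whose only separating curves are boundary-parallel, so $K=\emptyset$ and $Lk(D)=L$ does not split.

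The main obstacle is showing that each factor does not split further, i.e.\ that $\mathcal{SM}(V)$ does not split when $V$ is a (spotted) handlebody of genus $\geq2$. I would use the general fact that a flag complex splits as a join precisely when its complementary graph, the \emph{intersection graph} whose edges join pairs of meridians that cannot be made disjoint, is disconnected; equivalently, $\mathcal{SM}(V)$ does not split if and only if any two separating meridians of $V$ are joined by a chain of pairwise-intersecting separating meridians. The heart of the matter is therefore to prove that this intersection graph is connected: given two disjoint separating meridians $E,F$ of $V$, one must produce a separating meridian $G$ meeting both, so that $E,G,F$ form such a chain. This I would establish by induction on the genus, treating the spotted genus-$2$ handlebody as a base case by direct inspection of its one-dimensional separating-meridian complex, and in the inductive step using the room afforded by genus $\geq2$ to slide a separating meridian across the handles enclosed by $E$ and by $F$. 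This geometric construction is where the real work lies.

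Granting the non-splitting of the factors, uniqueness follows from a standard fact: the decomposition of a flag complex into join factors that are themselves non-splitting is unique and is given by the connected components of the intersection graph. Hence for $1<k<g-1$ the complex $Lk(D)$ has exactly two non-splitting join factors, of dimensions $2k-3$ and $2(g-k)-3$, while for $k=1$ it is itself non-splitting; since the multiset $\{2k-3,\,2(g-k)-3\}$ determines $\{k,g-k\}$, this proves both implications of part (a). For part (b) I observe that $\phi$ restricts to a simplicial isomorphism $Lk(D)\to Lk(\phi(D))$, and that splitting, non-splitting and the dimensions of the join factors are all invariants of a simplicial isomorphism; by the characterization in (a) the type of a meridian is determined by these invariants, so $\phi(D)$ has the same type $(k,g-k)$ as $D$.

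Finally, for part (c) the same isomorphism carries the decomposition $Lk(D)=K\oplus L$ to $Lk(\phi(D))=\phi(K)\oplus\phi(L)$. Each of $\phi(K),\phi(L)$ is non-splitting, being the image of a non-splitting complex under an isomorphism, so by the uniqueness of the non-splitting join decomposition of $Lk(\phi(D))$ the two subcomplexes $\phi(K),\phi(L)$ are precisely its two factors. Consequently, if $E,F\in K$ then $\phi(E),\phi(F)$ both lie in the single factor $\phi(K)$, and $Lk(\phi(D))=\phi(K)\oplus\phi(L)$, as claimed.
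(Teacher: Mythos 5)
Your treatment of (a) and (b) matches the paper's approach (the paper disposes of (a) as ``straightforward by dimension arguments'' and proves (b) by exactly your invariance remark), but your part (c) takes a genuinely different route, so a comparison is in order. The paper proves (c) by a direct geometric construction: for $E,F\in K$ it chooses a non-separating meridian $X\subset H_{k}(D)$ and a simple closed curve $\sigma$ meeting $X$ once, meeting both $E$ and $F$, and disjoint from $D$; then $X(\sigma)=X\cup_{\sigma}X$ is a separating meridian disjoint from $D$ but intersecting both $E$ and $F$, so if $\phi(E),\phi(F)$ lay in different summands of \emph{any} splitting $K'\oplus L'$ of $Lk(\phi(D))$, the vertex $\phi(X(\sigma))$ of $Lk(\phi(D))$, lying in $K'$ or in $L'$, would be joined to (hence disjoint from) one of $\phi(E),\phi(F)$, contradicting the fact that an isomorphism preserves non-adjacency. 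You instead push the splitting $K\oplus L$ through $\phi$ and invoke uniqueness of the decomposition of a flag complex into non-splitting join factors (connected components of the complement graph). Both arguments are sound, but there is a real trade-off: the paper's (c) needs only the \emph{existence} of some splitting of $Lk(\phi(D))$ and is completely independent of the irreducibility assertion in (a), whereas your (c) rests entirely on that irreducibility --- and the non-splitting of the separating-meridian complex of a spotted handlebody of genus at least two is precisely the step you leave as a sketch (``where the real work lies''; your reformulation via connectivity of the intersection graph is the right strategy for it). Since the paper never writes that step out either, your (a) is no less complete than the source's; but by routing (c) through uniqueness you make all three parts of the lemma hinge on the one unproven geometric fact, so you should either carry out the connectivity argument in full or adopt the paper's $X(\sigma)$ construction, which costs two lines and decouples (c) from it.
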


\begin{proof}
The proof of (a) is straightforward by dimension arguments on $Lk\left(
D\right)  .$\\
{\bf (b)} An automorphism $\phi$ maps $Lk\left(  D\right)  $ onto $Lk\left(
\phi \left(  D\right)  \right)  $ isomorphically and, thus, preserves the
splitting (resp. non-splitting).\\
{\bf (c)} Pick a separating meridian $X  $ intersecting both
$E$ and $F$ but disjoint from $D.$  
$\phi \left(  D\right)  $ is separating, hence, $Lk\left(  \phi \left(
D\right)  \right)  $ splits, say, $Lk\left(  \phi \left(  D\right)  \right)  =
K^{\prime}\oplus L^{\prime}. $ If $\phi \left(  E\right)  ,\phi \left(
F\right)  $ do not belong to the same summand of the splitting of $Lk\left(
\phi \left(  D\right)  \right)  $ then $\phi \left(  X
\right)  $ would have to intersect $\phi \left(  D\right)  ,$ a contradiction.
\end{proof}

Before proceeding with further properties of an automorphism $\phi$ we need a
result concerning the curve complex of a sphere with holes (see \cite{Kor}):
\begin{multline}
\mathrm{If\ }n\geq5\mathrm{\ then\ all\ elements\ of\ }Aut\left(
\mathcal{C}\left(  S_{0,n}\right)  \right)  \mathrm{\ are\ geometric,}\\
\mathrm{that\ is,\ they\ are\ induced\ by\ a\ homeomorphism\ of\ }%
S_{0,n}.\label{Korkmaz}%
\end{multline}

\noindent A \textit{cut system} $\mathbb{C}$ for the handlebody $H_{g}$ is a
collection $\left \{  C_{1},\ldots,C_{g}\right \}  $ of pairwise disjoint
non-separating meridians such that $H_{g}\setminus \cup_{i=1}^{g}C_{i}$ is a
$3-$ball with $2g$ spots. We will denote this spotted ball by $H_{g}%
\setminus \mathbb{C}.$

A \textit{separating cut system} $\mathbb{Z}$ for the handlebody $H_{g}$ is a
collection $\left \{  Z_{1},\ldots,Z_{g}\right \}  $ of pairwise disjoint
$\left(  1,g-1\right)  $ meridians so that the intersection $\cap_{i=1}%
^{g}H_{g-1}\left(  Z_{i}\right)  $ is a $3-$ball with $g$ spots. We will
denote this spotted ball by $H_{g}\setminus \mathbb{Z}.$ Note that given
$\mathbb{C}$ we can find $\mathbb{Z}$ so that $C_{i}$ is the unique
non-separating meridian in $H_{1}\left(  Z_{i}\right)  .$ By Lemma
\ref{preserves}, $\phi \left(  Z_{i}\right)  ,i=1,\ldots,g$ is also a
separating cut system, that is, $\cap_{i=1}^{g}H_{g-1}\left(  \phi \left(
Z_{i}\right)  \right)  $ is a $3-$ball with $g$ spots.
Thus, to prove Theorem \ref{fi_ext}, it suffices to show that every 
automorphism of $\mathcal{SM}\left(  H_{g}\right)$ fixing  $\mathbb{Z}$
extends uniquely to an automorphism of $\mathcal{M}\left(  H_{g}\right) , $
provided that $g\geq 6 .$

Every simple closed curve in the boundary of the spotted $3-$ball
$H_{g}\setminus \mathbb{Z}$ is a separating meridian and vice-versa. Thus, if
$g\geq5$ an automorphism $\phi$ of $\mathcal{SM}\left(  H_{g}\right)  $
fixing  $\mathbb{Z}$
induces an element in $Aut\left(  \mathcal{C}\left(  S_{0,g}\right)  \right)
$ which is geometric by (\ref{Korkmaz}). Thus we have

\begin{theorema}
Let $\mathbb{Z}=\left \{  Z_{1},\ldots,Z_{g}\right \}  $ be separating cut
system for $H_{g},$ $g\geq5,$ and $\phi$ an automorphism of $\mathcal{SM}%
\left(  H_{g}\right)  $ fixing  $\mathbb{Z}.$ Then $\phi$ acts geometrically on the subcomplex
$\left \{  D\in \mathcal{SM}\left(  H_{g}\right)  :D\cap Z_{i}=\emptyset,\forall
i=1,\ldots,g\right \}  .$
\end{theorema}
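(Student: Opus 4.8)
The plan is to realize Theorem A as essentially a restatement of the Korkmaz result (\ref{Korkmaz}), with the main work being the careful setup of the correspondence between the relevant subcomplex of $\mathcal{SM}(H_g)$ and the curve complex $\mathcal{C}(S_{0,g})$. First I would fix the separating cut system $\mathbb{Z}=\{Z_1,\ldots,Z_g\}$ and pass to the spotted $3$-ball $H_g\setminus\mathbb{Z}$, whose boundary is a sphere with $g$ holes (one hole for each $Z_i$), i.e.\ a surface of type $S_{0,g}$. The key elementary observation, already noted in the text, is that a simple closed curve in $\partial(H_g\setminus\mathbb{Z})$ bounds a disk in the $3$-ball precisely when it is essential and non-peripheral, so that isotopy classes of essential simple closed curves in $S_{0,g}$ correspond bijectively to isotopy classes of separating meridians $D$ of $H_g$ with $D\cap Z_i=\emptyset$ for all $i$. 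I would spell out this bijection explicitly and check that it respects disjointness, so that it is in fact a simplicial isomorphism between the subcomplex $\{D\in\mathcal{SM}(H_g):D\cap Z_i=\emptyset,\ \forall i\}$ and $\mathcal{C}(S_{0,g})$.

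Next I would verify that $\phi$ restricts to an automorphism of this subcomplex. By Lemma~\ref{preserves}(b), $\phi$ sends $(1,g-1)$-meridians to $(1,g-1)$-meridians, and by the paragraph preceding the statement, $\phi(\mathbb{Z})=\{\phi(Z_1),\ldots,\phi(Z_g)\}$ is again a separating cut system. Since $\phi$ is simplicial and bijective on vertices, it maps separating meridians disjoint from all the $Z_i$ to separating meridians disjoint from all the $\phi(Z_i)$, hence it carries the subcomplex associated with $\mathbb{Z}$ to the subcomplex associated with $\phi(\mathbb{Z})$. Transporting through the two identifications with $\mathcal{C}(S_{0,g})$, this gives an automorphism of $\mathcal{C}(S_{0,g})$; here I would use that both $H_g\setminus\mathbb{Z}$ and $H_g\setminus\phi(\mathbb{Z})$ are spotted $3$-balls with $g$ spots, so both boundary surfaces are of type $S_{0,g}$ and may be identified with a single model surface.

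With the induced automorphism of $\mathcal{C}(S_{0,g})$ in hand, I would invoke (\ref{Korkmaz}): since $g\geq4$ means $n=g$ may be as small as $4$, I would note that the genuinely geometric conclusion of \cite{Kor} is stated for $n>4$, so care is needed at the boundary case. For $g\geq5$ the automorphism is induced by a homeomorphism $h$ of $S_{0,g}$; I would then promote $h$ to a homeomorphism of the spotted $3$-ball (coning, or using that a homeomorphism of the boundary sphere-with-holes extends over the $3$-ball), and thence to a homeomorphism of $H_g$ carrying $\mathbb{Z}$ to $\phi(\mathbb{Z})$ and inducing $\phi$ on the whole subcomplex. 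This last extension step, together with reconciling the stated hypothesis $g\geq4$ with the $n>4$ requirement in (\ref{Korkmaz}), is the point I expect to require the most care; the simplicial isomorphism with $\mathcal{C}(S_{0,g})$ itself is routine once the disk-versus-curve dictionary is set up.
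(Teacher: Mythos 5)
Your proposal takes essentially the same route as the paper's proof: both pass to the spotted $3$-ball $H_{g}\setminus\mathbb{Z}$, identify the subcomplex of separating meridians disjoint from $\mathbb{Z}$ with the curve complex $\mathcal{C}\left(  S_{0,g}\right)$ of its boundary sphere with $g$ holes, and then invoke the Korkmaz result (\ref{Korkmaz}) to obtain a homeomorphism inducing $\phi$ on that subcomplex. The extra points you raise---identifying the subcomplexes attached to $\mathbb{Z}$ and to $\phi(\mathbb{Z})$, extending the boundary homeomorphism over the handlebody, and the tension between the hypothesis $g\geq4$ and the requirement $n>4$ in (\ref{Korkmaz})---are steps the paper leaves implicit, the last being a genuine boundary-case issue at $g=4$ in the paper's own formulation, though harmless in practice since Theorem A is only ever applied with $g\geq6$.
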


From now on,  $\phi$ will always denote an automorphism of 
$\mathcal{SM}\left(  H_{g}\right)  $  fixing  $\mathbb{Z}$ where, after composing by a homeomorphism of $H_{g},$ we may assume
that $\phi$ is the identity on every separating meridian not intersecting
$\cup_{i=1}^{g}Z_{i}.$ The next Lemma contains certain intersection properties between separating meridians 
which are preserved by an automorphism of $\mathcal{SM}\left(  H_{g}\right).$

\begin{lemma}
\label{presrvesNUMintersection} Let $E,F,D$ be three pair-wise disjoint
separating meridians and $E\cup_{\tau}F$ the separating meridian obtained by
joining $E$ and $F$ along an embedded arc $\tau$ which has one endpoint in
$\partial E$ and the other in $\partial F.$ \newline \textbf{(a)} $\phi \left(
E\cup_{\tau}F\right)  $ is a separating meridian obtained by joining
$\phi \left(  E\right)  $ with $\phi \left(  F\right)  $ along a unique (up to
isotopy) simple arc $\tau^{\prime}.$ \newline \textbf{(b)} Assume $g\geq5$ and
$\tau$ intersects $\partial D$ in $1$ point. Then the arc $\tau^{\prime}$
posited in part (a) above intersects $\phi \left(  D\right)  $ in $1$ point.
\newline \textbf{(c)} Assume $g\geq5$ and $\tau$ intersects $\partial D$ in $2$
points. Moreover, assume $D$ is a $\left(  g_{D},g-g_{D}\right)  $-meridian
with $g-g_{D}\geq3$ and $E,F\subset H_{g-g_{D}}\left(  D\right)  .$ Then the
arc $\tau^{\prime}$ posited in part (a) above intersects $\phi \left(
D\right)  $ in $2$ points. \newline \textbf{(d)} Assume $g\geq 6$ and $D$ is a
$\left(  g_{D},g-g_{D}\right)  $-meridian with $g-g_{D}=2$ and $E,F\subset
H_{g-g_{D}}\left(  D\right)  .$ Let $D^{\prime}$ be a $\left(  4,g-4\right)
$-meridian with $D\subset H_{4}\left(  D^{\prime}\right)  $ such that
$D,D^{\prime}$ bound a genus $2$ handlebody $H_{2}\left(  D,D^{\prime}\right)
=H_{g_{D}}\left(  D\right)  \cap H_{4}\left(  D^{\prime}\right)  $ with 2
spots and, in addition, $D^{\prime}\cap \tau=\varnothing.$ If $\tau$ intersects
$\partial D$ in $2$ points then the arc $\tau^{\prime}$ posited in part (a)
above intersects $\phi \left(  D\right)  $ in $2$ points.
\end{lemma}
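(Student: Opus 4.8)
The plan is to show that the (necessarily even) intersection number $|\tau' \cap \phi(D)|$ equals exactly $2$ by realizing $\phi$, near $D$, as a genuine homeomorphism of the type $(2,2)$ surface cut out by the pair $D, D'$, and then invoking the fact that homeomorphisms preserve geometric intersection numbers. First I would record the easy bounds. Since $E, F \subset H_{g-g_{D}}(D)$ lie in the same summand of $Lk(D)$, Lemma \ref{preserves}(c) places $\phi(E), \phi(F)$ in the same summand of $Lk(\phi(D))$; hence the arc $\tau'$ joining $\partial\phi(E)$ to $\partial\phi(F)$ crosses $\phi(D)$ an even number of times. Moreover $E \cup_{\tau} F$ meets $D$ (because $\tau$ crosses $\partial D$), so these two separating meridians are non-adjacent in $\mathcal{SM}(H_{g})$; as $\phi$ is an automorphism, $\phi(E \cup_{\tau} F)$ and $\phi(D)$ are non-adjacent as well, forcing $|\tau' \cap \phi(D)| \geq 2$. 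It remains to exclude the possibility $|\tau' \cap \phi(D)| \geq 4$.

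For the upper bound I would exploit the extra hypotheses of (d). Write $\Sigma$ for the subsurface $\partial H_{g} \cap H_{2}(D,D')$; since $H_{2}(D,D')$ is a genus $2$ handlebody carrying the two spots $D$ and $D'$, the surface $\Sigma$ has type $(2,2)$, with boundary circles $\partial D$ and $\partial D'$. Because $E, F \subset H_{2}(D)$ and $\tau \cap D' = \varnothing$, the meridian $E \cup_{\tau} F$ is disjoint from $D'$ and meets $\Sigma$ exactly in the sub-arc $\tau_{0}$ of $\tau$ running between its two intersection points with $\partial D$; thus $\tau_{0}$ is a single properly embedded arc in $\Sigma$ with both endpoints on $\partial D$. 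After composing $\phi$ with a homeomorphism of $H_{g}$ supplied by Theorem A (choosing a separating cut system compatible with $D$ and $D'$), I may assume $\phi(D)=D$ and $\phi(D')=D'$, so that $\phi$ carries $\Sigma$ to itself.

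The key step is to check that $\phi$ induces a simplicial automorphism $\phi_{\ast}$ of the arc complex $\mathcal{A}(\Sigma)$. Each essential arc $\alpha$ in $\Sigma$ yields a separating meridian of $H_{g}$ by the union construction ($D\cup_{\alpha} D'$, or a self-banding of $D$ resp. $D'$ according to the endpoints of $\alpha$), and the hypothesis $g\geq6$ guarantees that the components on the far sides of $D$ and $D'$ have genus at least $g_{D}=g-2\geq4$ and $g-4\geq2$ respectively; this ensures that all these meridians are genuine vertices of $\mathcal{SM}(H_{g})$ and that disjoint arcs correspond to disjoint meridians. Part (a) of the present lemma shows that $\phi$ sends such a meridian to another of the same form based at $D, D'$, and the arc $\alpha$ is recovered unambiguously from its meridian; this produces the required $\phi_{\ast}\in \operatorname{Aut}(\mathcal{A}(\Sigma))$. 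By the theorem of Irmak--McCarthy \cite{IrMC}, $\operatorname{Aut}(\mathcal{A}(\Sigma))$ is geometric, so $\phi_{\ast}$ is realized by a homeomorphism $h$ of $\Sigma$.

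Finally, the $\Sigma$-trace of $\phi(E\cup_{\tau} F)$ is exactly $h(\tau_{0})$, again a single properly embedded arc with both endpoints on $\partial D$. Since the crossings of $\tau'$ with $\partial\phi(D)=\partial D$ occur precisely at the endpoints of this trace, while the remaining part of $\tau'$ stays in the genus $2$ summand $H_{2}(\phi(D))$ containing $\phi(E)$ and $\phi(F)$, the arc $\tau'$ meets $\phi(D)$ in exactly $2$ points, as claimed. I expect the main obstacle to be the compatibility asserted in this last step, namely that the $\Sigma$-trace of an image meridian coincides with $h$ applied to the trace of the original; establishing this requires verifying that $\phi_{\ast}$ is defined consistently on \emph{all} arcs of $\Sigma$, including those such as $\tau_{0}$ with both endpoints on a single boundary circle, and that the union construction is faithful enough to pin down $h$ uniquely. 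This is precisely the point at which the genus bound $g\geq6$ is indispensable.
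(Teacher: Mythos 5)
Your lower bound is sound: evenness of $\left\vert \tau^{\prime}\cap\phi\left(  D\right)  \right\vert $ via Lemma \ref{preserves}(c), together with the fact that $\phi\left(  E\cup_{\tau}F\right)  $ and $\phi\left(  D\right)  $ cannot be disjoint, gives $\left\vert \tau^{\prime}\cap\phi\left(  D\right)  \right\vert \geq2$, just as in the paper. The upper bound, however, contains a genuine gap, and it is exactly the step you flag as ``the main obstacle'': the construction of the induced automorphism $\phi_{\ast}$ of $\mathcal{A}\left(  \Sigma \right)  .$ In this paper that construction (Lemma \ref{extensionF}) is carried out \emph{after} Lemma \ref{presrvesNUMintersection} and depends on parts (c) \emph{and} (d): to define the image of an arc with both endpoints on a single boundary circle (precisely the type of arc your $\tau_{0}$ is), the paper must pass to auxiliary meridians $E_{1},E_{2}$ lying in the handlebody beyond that circle and control the intersection numbers of $\phi\left(  E_{1}\cup_{\sigma_{12}}E_{2}\right)  $ with that circle's meridian; this is the content of Lemmas B and C, whose proofs invoke Lemma \ref{presrvesNUMintersection}(c,d), and part (d) is indispensable exactly when the handlebody beyond the circle has genus $2.$ In your setting this is unavoidable: the component beyond $\partial D$ is $H_{2}\left(  D\right)  ,$ of genus exactly $2$ (and beyond $\partial D^{\prime}$ the genus is $g-4,$ which equals $2$ when $g=6$). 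Your substitute, namely that part (a) shows $\phi$ sends such meridians to meridians ``of the same form based at $D,D^{\prime}$,'' does not work: part (a) concerns joins $E\cup_{\tau}F$ of two \emph{distinct disjoint} separating meridians, not self-bandings of $D$ along an arc with both endpoints on $\partial D,$ and nothing proved so far excludes that $\phi$ of such a self-banded meridian is some other separating meridian disjoint from $D\cup D^{\prime}.$ Worse, if you try to define $\phi_{\ast}\left(  \tau_{0}\right)  $ using the natural auxiliary meridians in $H_{2}\left(  D\right)  $ --- namely $E$ and $F$ themselves --- the meridian you must control is exactly $E\cup_{\tau}F,$ and the assertion that its image meets $\Sigma$ in a single arc is precisely statement (d). The route is circular.

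A second, related gap sits at the end: even granting $\phi_{\ast}$ and its geometric realization $h,$ your claim that the $\Sigma$-trace of $\phi\left(  E\cup_{\tau}F\right)  $ equals $h\left(  \tau_{0}\right)  $ presupposes that this trace is a single arc, which is equivalent to the conclusion $\left\vert \tau^{\prime}\cap\phi\left(  D\right)  \right\vert =2$ being proved; $E\cup_{\tau}F$ is not one of the meridians used to define $\phi_{\ast},$ so no compatibility is automatic. The paper's actual proof of (d) avoids the arc complex entirely and is not recoverable from your outline: using $g\geq6$ it chooses disjoint separating meridians $B,B^{\prime}\subset H_{g-4}\left(  D^{\prime}\right)  $ with $D^{\prime}=B\cup_{\mu}B^{\prime},$ forms the $\left(  g-3,3\right)  $-meridian $\Delta=\left(  E\cup_{\tau}F\right)  \cup_{\sigma}B,$ and applies parts (b) and (c) to conclude that $\tau^{\prime}$ is the unique arc (rel its endpoints) disjoint from $\phi\left(  \Delta \right)  $ and that $\rho^{\prime}$ --- where $\rho$ is the arc with $D=E\cup_{\rho}F$ --- is also disjoint from $\tau^{\prime};$ since $\phi\left(  D\right)  =\phi\left(  E\right)  \cup_{\rho^{\prime}}\phi\left(  F\right)  ,$ this forces $\left\vert \tau^{\prime}\cap\phi\left(  D\right)  \right\vert =2.$ To salvage your approach you would need an argument, independent of (d), defining $\phi_{\ast}$ on arcs with both endpoints on $\partial D$ --- which is essentially what the paper's direct proof accomplishes.
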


\begin{proof}
\textbf{(a)} Let $E$ be a $\left(  g_{E},g-g_{E}\right)  $-meridian and $F$ a $\left(
g_{F},g-g_{F}\right)  $-meridian. Without loss of generality, we may assume
that $H_{g_{E}}\left(  E\right)  \cap H_{g_{F}}\left(  F\right)  =\emptyset.$
Then $X=E\cup_{\tau}F$ is a $\left(  g_{E}+g_{F},g-g_{E}-g_{F}\right)  $
meridian and $X,E,F$ are disjoint meridians which bound a $3-$ball in
$H_{g_{E}+g_{F}}\left(  X\right)  .$ By Lemma \ref{preserves}, all these properties hold for their
images $\phi \left(  E\right)  ,\phi \left(  F\right)  $ and $\phi \left(
X\right)  ,$ hence, there is a unique (up to homotopy) arc $\tau^{\prime}$
with endpoints in $\phi \left(  E\right)  ,\phi \left(  F\right)  $ not
intersecting $\phi \left(  X\right)  .$ As $\phi \left(  E\right)  ,\phi \left(
F\right)  $ and $\phi \left(  X\right)  $ bound a $3-$ball containing
$\tau^{\prime}$ we clearly have $\phi \left(  X\right)  =\phi \left(  E\right)
\cup_{\tau^{\prime}}\phi \left(  F\right)  .$

\textbf{(b)} Observe that if $D$ is a $\left(  1,g-1\right)  -$meridian the
intersection $\partial D\cap \tau$ cannot be one point, hence, $D$ is
necessarily a $\left(  g_{D},g-g_{D}\right)  $-meridian with $1<g_{D}<g-1.$
The meridian $X=E\cup_{\tau}F$ intersects $D$ in one arc which splits both
$X,D$ into two subdisks. By surgery along this arc we obtain four separating
meridians $W_{i},$ $i=1,\ldots,4$ which bound a $3-$ball containing $X$ and
$D.$ Clearly, we may find a separating cut system 
$\mathbb{Z} = \left \{  Z_{1},\ldots,Z_{g}\right \}  $
such that for each $i=1,2,3,4$ $W_{i} \in H_{g}\setminus \mathbb{Z}.$ In particular,
$X$ and $D$ belong to the subcomplex 
$\left \{ C \in \mathcal{SM}\left(  H_{g}\right)  :C\cap Z_{i}=\emptyset,\forall
i=1,\ldots,g\right \}  ,$ hence, by Theorem A, $\phi$ acts geometrically on 
$X$ and $D.$ Therefore, $\phi (D)$ and $\phi \left(  X\right)  =\phi \left(  E\right) \cup_{\tau^{\prime}}\phi \left(  F\right)  $ intersect at a single arc.
This implies that $\tau^{\prime}$  intersects $\phi \left(  D\right)  $ in $1$ point.

\textbf{(c)} We may assume that $E$ is a $\left(  g_{E},g-g_{E}\right)  $-meridian with
$H_{g-g_{E}}\left(  E\right)  \supset D,F$ and similarly for $F,$ that is,
$H_{g-g_{F}}\left(  F\right)  \supset D,E.$ Observe that for an arbitrary arc
$\sigma$ joining $\partial E$ with $\partial F$ we have, by Lemma
\ref{preserves}(c)
\begin{multline}
\phi \left(  E\right)  ,\phi \left(  F\right)
\mathrm{\ belong\ to\ the\ same\ summand\ of\ }Lk\left(  \phi \left(  D\right)
\right) \\
\Leftrightarrow \left \vert \sigma \cap D\right \vert \mathrm{\ is\ even}
\label{sigma1}%
\end{multline}
and, equivalently,
\begin{multline}
\phi \left(  E\right)  ,\phi \left(  F\right)
\mathrm{\ do\ not\ belong\ to\ the\ same\ summand\ of\ }Lk\left(  \phi \left(
D\right)  \right) \\
\Leftrightarrow \left \vert \sigma \cap D\right \vert \mathrm{\ is\ odd}
\label{sigma2}%
\end{multline}
We first show the result in the case $D,E,F$ do not bound a $3-$ball which is
equivalent to $g_{E}+g_{D}+g_{F}<g.$ \begin{figure}[ptb]
\begin{center}
\includegraphics
[scale=1.6]
{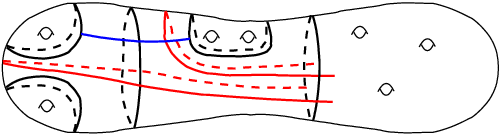}
\end{center}
\par
\begin{picture}(22,12)
\put(51,13){$\phi(F)$}
\put(51,130){$\phi(E_1)$}
\put(97,15){$\phi \left( Y \left(\tau_1 \right) \right)$}
\put(233,20){$\phi \left( Y \left(\tau \right)  \right)$}
\put(203,116){$\phi(E_2)$}
\put(90,103){$\color{blue} \rho^{\prime}$}
\put(267,72){$\color{red} \partial _1 \phi (D)$}
\put(268,51){$\color{red} \partial \phi (D)$}
\end{picture}
\caption{The component $\partial_{1}\phi(D)$ which intersects $\phi \left(  Y
\left( \tau \right)  \right) $ but not $\phi \left(  Y \left( \tau_{1} \right)
\right) $ }%
\label{quardangle}%
\end{figure}The arc $\tau$ splits into $3$ subarcs $\tau_{E},\tau_{D},\tau
_{F}$ such that $\tau_{E}\cup \tau_{D}\cup \tau_{F}=\tau,$ $\tau_{E}$ (resp.
$\tau_{F}$) has endpoints on $E$ (resp. $F$) and $D$ and $\tau_{D}\subset
H_{g_{D}}\left(  D\right)  .$ Set $Y_{E}=E\cup_{\tau_{E}}D$ and $Y_{F}%
=D\cup_{\tau_{F}}F.$ We claim that $Y_{E}$ is not isotopic to $F.$ For, if
$Y_{E},F$ are isotopic then $Y_{E}$ would be a $\left(  g_{E}+g_{D}%
,g_{F}\right)  $-separating meridian which implies that $g_{E}+g_{D}+g_{F}=g$,
a contradiction. Thus $Y_{E}\neq F$ and, similarly, $Y_{F}\neq E.$

Clearly $D,E$ belong to distinct components of $H_{g}\setminus Y_{F}$ and
since $\tau_{E}\cap \tau_{F}=\emptyset$ we have that
\begin{itemize}
\item[(i)] $\left \vert Y_{F}\cap Y_{E}\right \vert =1.$
\item[(ii)] surgery along $Y_{E}\cap Y_{F}$ produces $E,F,D$ and a separating
meridian of type $\left(  g-g_{E}-g_{D}-g_{F},g_{E}+g_{D}+g_{F}\right)  .$
\item[(iii)] in particular, $D$ along with a subdisk of $Y_{E}$ and a subdisk
of $Y_{F}$ bound a $3-$ball with 2 spots.
\item[(iv)] $\left \vert \tau \cap Y_{E}\right \vert =1=\left \vert \tau \cap
Y_{F}\right \vert .$
\end{itemize}
\noindent Let $\tau^{\prime}$ be the arc given by part (a), that is,
$\phi \left(  E\cup_{\tau}F\right)  =\phi \left(  E\right)  \cup_{\tau^{\prime}%
}\phi \left(  F\right)  .$ Then, by case (b), property (\ref{sigma2}) above and
Lemma \ref{preserves}, all four above properties (denoted by (i)$^{\prime
},\ldots$,(iv)$^{\prime}$) hold for $\tau^{\prime}$ and the images
$\phi \left(  D\right)  $, $\phi \left(  E\right)  $, $\phi \left(  Y_{E}\right)
$, $\phi \left(  F\right)  $ and $\phi \left(  Y_{F}\right)  .$ Clearly,
$\left \vert \tau^{\prime}\cap \phi \left(  D\right)  \right \vert \geq2k$ for a
positive integer $k.$ If $k\neq1$, then there would exist $k-1$ subarcs of
$\tau^{\prime}$ with endpoints on $\phi \left(  D\right)  $ contained in
$H_{g-g_{D}} \left(  \phi \left(  D\right)  \right)  .$ By (iv)$^{\prime},$
these subarcs must be disjoint from $\phi \left(  Y_{E}\right)  $ and
$\phi \left(  Y_{F}\right)  .$ Hence, all these subarcs would have to be
contained in the $3$-ball given by (iii)$^{\prime}$. Since the boundary of
this 3-ball is an annulus, we may perform an elementary isotopy to eliminate
them, showing that $k=1.$

We now examine the case $D,E,F$ bound a $3-$ball or, equivalently,
$g_{E}+g_{D}+g_{F}=g.$ We work under the assumption $g-g_{D}\geq3,$ hence, at
least one of $g_{E},g_{F}$ is $\geq2.$ Without loss of generality we assume
that $g_{E}\geq2.$ Let $E_{1}$ be a $\left(  1,g-1\right)  $-meridian in
$H_{g_{E}}\left(  E\right)  $ and $E_{2}$ a $\left(  g_{E}-1,g-g_{E}+1\right)
$-meridian in $H_{g_{E}}\left(  E\right)  $ so that $E=E_{1}\cup_{\rho}E_{2}$
for some simple arc $\rho$ with endpoints on $\partial E_{1},\partial E_{2}$
and $\rho \cap E=\emptyset.$ Extend $\tau$ to a simple arc $\tau_{1}$ with
endpoints on $\partial E_{1},\partial F.$ Clearly $\left \vert \tau_{1}\cap
D\right \vert =2$ and $E_{1},D,F$ do not bound a $3-$ball. Thus, by the
previous case, $\left \vert \tau_{1}^{\prime}\cap \phi \left(  D\right)
\right \vert =2.$ Set $Y\left(  \tau \right)  =E\cup_{\tau} F $ and $Y\left(
\tau_{1}\right)  =E_{1}\cup_{\tau_{1}}F.$ 
\begin{figure}[ptb]
\begin{center}
\includegraphics
[scale=1.4]
{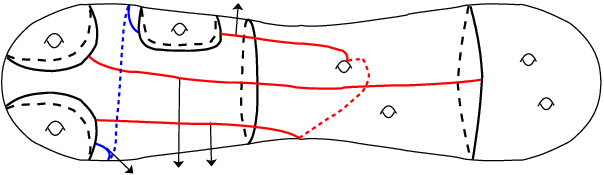} 
\end{center}
\par
\begin{picture}(22,12)
\put(48,20){$\phi(F)$}
\put(51,143){$\phi(B)$}
\put(90,143){$\phi(E)$}
\put(90,15){$\color{blue} \tau^{\prime}$}
\put(163,27){$\phi \left( \Delta \right)$}
\put(301,20){$\phi \left( B^{\prime} \right)$}
\put(143,18){$\color{red} \rho_2^{\prime} $}
\put(157,143){$\color{red} \rho_1^{\prime} $}
\put(118,16){$\color{red} \mu_1^{\prime}$}
\put(272,88){$\color{red} \mu^{\prime}$}
\put(225,113){$\color{red} \rho^{\prime} $}
\end{picture}
\caption{The mutually disjoint arcs $\mu_{1}^{\prime} , \rho_{1}^{\prime} ,
\rho_{2}^{\prime}$ all with one endpoint on $\phi \left(  \Delta \right) $ and
the other on $\phi(B), \phi(E) , \phi(F)$ respectively. }%
\label{quardanglee}%
\end{figure}
Assume, on the contrary,
\[
\left \vert \partial \phi \left(  D\right)  \cap \phi \left(  Y\left(  \tau \right)
\right)  \right \vert >2.
\]
Let $H_{g_{E_{2}}} \equiv H_{g_{E_{2}}} \left(  \phi \left(
Y\left(  \tau \right)  \right)  , \phi \left(  Y\left(  \tau_{1}\right)
\right)  \right) $ be the handlebody bounded by the meridians $\phi \left(
Y\left(  \tau \right)  \right)  $ and $\phi \left(  Y\left(  \tau_{1}\right)
\right)  .$ Then the above inequality implies that $\phi(D)$ intersects
$H_{g_{E_{2}}}$ in more than one components. As $\left \vert \tau_{1}^{\prime
}\cap \phi \left(  D\right)  \right \vert =2 ,$ it follows that at least one
component of $\phi(D) \cap H_{g_{E_{2}}} ,$ say $\partial_{1}\phi \left(
D\right)  ,$ does not intersect $\phi \left(  Y\left(  \tau_{1}\right)
\right)  .$ This implies that $\partial_{1}\phi \left(  D\right)  $ is
contained in the $3$-ball bounded by $\phi \left(  Y\left(  \tau \right)
\right)  $, $\phi \left(  E_{2}\right)  $, $\phi \left(  Y\left(  \tau
_{1}\right)  \right)  $ (see Figure 1). It follows that $\partial_{1}%
\phi \left(  D\right)  $ intersects $\rho^{\prime}$ and, hence, $\partial
\phi \left(  D\right)  $ intersects $\phi \left(  E\right)  =\phi \left(
E_{1}\right)  \cup_{\rho^{\prime}}\phi \left(  E_{2}\right)  $ a contradiction
because $E\cap D=\emptyset.$ This completes the proof of (c).

\textbf{(d)} Since $g\geq6$ we have that the genus of $H_{g-4}\left(  D^{\prime
}\right)  $ is $\geq2,$ hence, we may choose disjoint separating meridians
$B,B^{\prime}$ in $H_{g-4}\left(  D^{\prime}\right)  $ of type $\left(
1,g-1\right)  $ and $\left(  g_{B^{\prime}},g-g_{B^{\prime}}\right)  ,$
respectively, with $1+g_{B^{\prime}}=g-4.$ Moreover, there exists an arc $\mu$
disjoint from $D^{\prime}$ such that $B\cup_{\mu}B^{\prime}=D^{\prime}.$

There is a unique (up to homotopy with endpoints on $\partial E,\partial F$)
arc $\rho$ disjoint from $\tau$ and $D.$ Clearly, $E\cup_{\rho}F=D.$
Pick an arc $\sigma$ with endpoints on $\tau$ and $\partial B$
such that its interior is disjoint from $\tau, D, B$ and $B^{\prime}. $
Clearly, up to isotopy,  $\sigma$   intersects $\partial D^{\prime}$ at 
a single point and it is disjoint from $\mu .$
Set $\Delta$ to be the meridian
\[
\Delta:=\left(  E\cup_{\tau}F\right)  \cup_{\sigma}B.
\]
Clearly, $\Delta$ is a $\left(  g-3,3\right)  $-meridian and $\left \vert
\rho \cap \partial \Delta \right \vert =2.$ By part (c)
\begin{equation}
\phi \left(  D\right)  =\phi \left(  E\cup_{\rho}F\right)  =\phi \left(
E\right)  \cup_{\rho^{\prime}}\phi \left(  F\right)  ,\mathrm{\ with\ }%
\left \vert \rho^{\prime}\cap \phi \left(  \Delta \right)  \right \vert
=2.\label{mer1}%
\end{equation}
Similarly, by part (b),
\begin{equation}
\phi \left(  D^{\prime}\right)  =\phi \left(  B\cup_{\mu}B^{\prime}\right)
=\phi \left(  B\right)  \cup_{\mu^{\prime}}\phi \left(  B^{\prime}\right)
,\mathrm{\ with\ }\left \vert \mu^{\prime}\cap \phi \left(  \Delta \right)
\right \vert =1.\label{mer2}%
\end{equation}
Moreover, $\mu^{\prime}\cap \tau^{\prime}=\varnothing.$

The meridians $\phi \left(  \Delta \right)  ,\phi \left(  E\right)  ,\phi \left(
F\right)  $ and $\phi \left(  B\right)  $ bound a $3$-ball and by (\ref{mer2})
$\mu^{\prime}$ induces an arc $\mu_{1}^{\prime}$ with endpoints on
$\phi \left(  \Delta \right)  ,\phi \left(  B\right)  .$
Similarly, by (\ref{mer1}), $\rho^{\prime}$ induces arcs
$\rho_{1}^{\prime},\rho_{2}^{\prime}$ with endpoints on $\phi \left(
\Delta \right)  ,\phi \left(  E\right)  $ and $\phi \left(  \Delta \right)
,\phi \left(  F\right)  $ respectively, which, up to homotopy, are disjoint
from $\mu_{1}^{\prime}$ (see Figure \ref{quardanglee}).
Every arc with endpoints on $\phi \left(  E\right)  $ and $\phi \left(  F\right)$
which intersects either $\rho_{1}^{\prime}$ or $\rho_{2}^{\prime}$ it must intersect
$\mu_{1}^{\prime}. $   As  $\mu_{1}^{\prime}\cap \tau^{\prime}=\varnothing,$ it follows that
\[
\tau^{\prime}\cap \left(  \rho_{1}^{\prime}\cup \rho_{2}^{\prime}\right)
=\tau^{\prime}\cap \rho^{\prime}=\varnothing
\]
Hence, $\tau^{\prime}$ intersects $\phi \left(  E\right)  \cup_{\rho^{\prime}%
}\phi \left(  F\right)  =\phi \left(  D\right)  $ at $2$ points. This completes
the proof of the Lemma.
\end{proof}

\section{The Arc Complex automorphism\label{arc_auto}}

Assume from now on that $A,B$ are two disjoint separating
meridians which bound a genus $2-$handlebody $H_{2}\left(  A,B\right)  $ with
two spots $A,B.$  We also assume that $H_{k}\left(  A\right)  $ has genus
$k\geq2$ and $H_{k^{\prime}}\left(  B\right)  $ has also genus $k^{\prime}%
\geq2.$ These assumptions on $k,k^{\prime}$ force the assumption $g\geq6$ on the genus of the handlebody.
We will use Lemma
\ref{presrvesNUMintersection} to define an automorphism
on the arc complex of the boundary of $H_{2}\left(  A,B\right)  .$

For arbitrary $A,B$ as above and any automorphism $\phi$ of $\mathcal{SM}%
\left(  H_{g}\right)  ,$ we may assume, using Theorem A, that $\phi$ fixes $A$
and $B$ because there exists a separating cut system disjoint from 
$A\cup B. $ The rest of this section is devoted into showing the following

\begin{proposition}
\label{mainlemmaH2}\textbf{(a)} Let $X,Y\subset H_{2}\left(  A,B\right)  $ be
$\left(  1,g-1\right)  $-separating meridians and $\phi$ an automorphism of
$\mathcal{SM}\left(  H_{g}\right)  ,$ $g\geq6.$ If $\delta \left(  X\right)
=\delta \left(  Y\right)  $ then $\delta \left(  \phi \left(  X\right)  \right)
=\delta \left(  \phi \left(  Y\right)  \right)  .$\newline \textbf{(b)} In
particular, if $A^{\prime},B^{\prime}$ are disjoint separating meridians
bounding a genus $1-$handlebody $H_{1}\left(  A^{\prime},B^{\prime}\right)
,$  the same result as in (a) holds for any $\left(  1,g-1\right)
$-separating meridians $X,Y\subset H_{1}\left(  A^{\prime},B^{\prime}\right)
.$\newline \textbf{(c)} Let $Z$ be a $\left(  2,g-2\right)  $-separating
meridian, $X,Y\subset H_{2}\left(  Z\right)  $ be $\left(  1,g-1\right)
$-separating meridians and $\phi$ an automorphism of $\mathcal{SM}\left(
H_{g}\right)  $ fixing $Z,$ $g\geq6.$ If $\delta \left(  X\right)  =\delta \left(
Y\right)  $ then $\delta \left(  \phi \left(  X\right)  \right)  =\delta \left(
\phi \left(  Y\right)  \right)  .$
\end{proposition}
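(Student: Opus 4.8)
The plan is to prove part (a) — the genuinely new content — by showing that the automorphism $\phi$, after being normalized via Theorem A to fix both $A$ and $B$, induces a simplicial automorphism of the arc complex of the type-$(2,2)$ surface $S=\partial H_{2}(A,B)$, and then appealing to the Irmak--McCarthy theorem \cite{IrMC} that $Aut(\mathcal{A}(S))$ is geometric. Parts (b) and (c) I would deduce from (a). In case (b) the genus-$1$ region $H_{1}(A,B)$ carrying $X,Y$ can be enlarged to a genus-$2$ region $H_{2}(A^{\prime},B^{\prime})$ still containing $X$ and $Y$, with both outer components of genus $\geq2$; this is possible precisely because $g\geq6$, since the available outer genus $g-2\geq4$ can be split as at least $2$ on each side, after which (a) applies verbatim. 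In case (c) the same argument is run for the one-spotted surface $\partial H_{2}(Z)$ of type $(2,1)$, for which Irmak--McCarthy is equally valid, or an auxiliary separating meridian is first introduced to reduce to the two-spotted case. Throughout, composing $\phi$ with a homeomorphism is harmless, since $\delta$ is a topological invariant of a $(1,g-1)$-meridian, so the Theorem A normalization costs nothing.

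For the core step I would set up a dictionary between separating meridians of $H_{g}$ lying in $H_{2}(A,B)$ and isotopy classes of essential arcs in $S$ with endpoints on $\partial A\cup\partial B$: an arc $\tau$ joining $\partial A$ to $\partial B$ produces the meridian $A\cup_{\tau}B$, and arcs with both endpoints on a single spot produce the corresponding surgered meridians. By Lemma \ref{presrvesNUMintersection}(a) the join operation is $\phi$-equivariant, $\phi(A\cup_{\tau}B)=A\cup_{\tau^{\prime}}B$, which defines the induced assignment $\tau\mapsto\tau^{\prime}$ on arcs. To promote this to a simplicial automorphism of $\mathcal{A}(S)$ I must check that it is a bijection carrying disjoint arcs to disjoint arcs, and here I would use parts (b), (c) and (d) of Lemma \ref{presrvesNUMintersection}: these guarantee that the geometric intersection numbers $|\tau\cap\partial D|\in\{1,2\}$ with the relevant meridians $D$ are preserved, which is exactly the combinatorial data needed to detect whether two arcs can be realized disjointly. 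It is precisely part (d), and hence the hypothesis $g\geq6$, that supplies the missing intersection-number control in the degenerate case $g-g_{D}=2$.

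Once the induced map is known to be an automorphism of $\mathcal{A}(S)$, Irmak--McCarthy produces a homeomorphism $h$ of $S$ realizing it; since $A$ and $B$ are distinguished and fixed, $h$ preserves the two boundary curves, and because it sends the meridian curves $\partial(A\cup_{\tau}B)$ to meridian curves it preserves the meridian structure of $H_{2}(A,B)$ and therefore extends to a homeomorphism $\hat{h}$ of the handlebody $H_{2}(A,B)$ realizing the action of $\phi$ on every separating meridian contained in $H_{2}(A,B)$. This is the decisive gain: although $\phi$ a priori acts only on separating meridians, the genuine homeomorphism $\hat{h}$ also acts on the non-separating meridian $\delta(X)$, and by naturality $\hat{h}(\delta(X))=\delta(\hat{h}(X))=\delta(\phi(X))$. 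Hence $\delta(X)=\delta(Y)$ forces $\delta(\phi(X))=\hat{h}(\delta(X))=\hat{h}(\delta(Y))=\delta(\phi(Y))$, proving (a). I expect the main obstacle to be the core step of the second paragraph — verifying that the arc assignment is well defined on all isotopy classes and strictly simplicial, with disjointness preserved in both directions — since this is where the intersection-number bookkeeping of Lemma \ref{presrvesNUMintersection}, and with it the full force of the genus hypothesis $g\geq6$, must be marshalled; by contrast, the extension of $h$ over the handlebody and the naturality of $\delta$ are routine.
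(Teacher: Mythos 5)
Your overall route is the same as the paper's: normalize $\phi$ by Theorem A so that it fixes $A$ and $B$, set up the dictionary between separating meridians in $H_{2}\left(A,B\right)$ and arcs in $\Sigma_{2,2}$, use Lemma \ref{presrvesNUMintersection} (with part (d) carrying the hypothesis $g\geq6$) to obtain an induced automorphism $\overline{\phi}$ of the arc complex, and invoke Irmak--McCarthy to realize $\overline{\phi}$ by a homeomorphism $h$ of $\Sigma_{2,2}$; this is precisely Lemma \ref{extensionF}. The genuine gap is the step you declare routine: the extension of $h$ over the handlebody. All your dictionary gives you is that $h$ carries certain \emph{separating} meridian curves (those of the form $\partial\left(A\cup_{\tau}B\right)$, and boundaries of surgered meridians $E\left(\sigma\right)$) to meridian curves. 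Extending a boundary homeomorphism over a handlebody requires that \emph{every} meridian, in particular every non-separating one, be sent to a meridian, and this is not implied by the separating data. Concretely, $h$ carries the once-holed torus $T\subset\Sigma_{2,2}$ bounded by $\partial X$ onto the once-holed torus $T^{\prime}$ bounded by $\partial\phi\left(X\right)$, hence carries $\partial\delta\left(X\right)$ to \emph{some} essential non-separating curve in $T^{\prime}$; but $T^{\prime}$ contains infinitely many isotopy classes of such curves and exactly one of them bounds a disk in the solid torus $H_{1}\left(\phi\left(X\right)\right)$. Nothing you have established rules out that $h\left(\partial\delta\left(X\right)\right)$ is one of the non-disk-bounding classes, in which case $\hat{h}$ does not exist and the naturality identity $\hat{h}\left(\delta\left(X\right)\right)=\delta\left(\hat{h}\left(X\right)\right)$ is vacuous. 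Worse, the assertion that $h$ respects non-separating meridians is essentially the content of part (a) itself, so as written the argument is circular at its decisive point.

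This is exactly where the paper does its non-routine work. After Lemma \ref{extensionF} it proves an additional lemma asserting $\overline{\phi}\left(\partial X\right)=\partial\phi\left(X\right)$ for every separating meridian $X\subset H_{2}\left(A,B\right)$, and then, in the proof of part (a), it shows that $\overline{\phi}\left(\partial D\right)$, where $D=\delta\left(X\right)$, bounds a disk, via a $\pi_{1}$/commutator argument: for every simple closed curve $\alpha$ meeting $\partial D$ once, $D\cup_{\alpha}D$ is a meridian, so the commutator (\ref{dehnF}) of $\overline{\phi}\left(\partial D\right)$ and $\overline{\phi}\left(\alpha\right)$ must be nullhomotopic in the handlebody; writing $\overline{\phi}\left(\partial D\right)=x^{n}$ in the free group $\pi_{1}\left(H_{2}\left(A,B\right)\right)$ and exhibiting an explicit dual curve $\beta$ with $\left[\beta\right]=x^{m}y^{\pm1}$ forces $n=0$, i.e.\ (after normalization) $\overline{\phi}\left(\partial D\right)=\partial D$. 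Some such argument must replace your appeal to routine extension. A secondary, smaller defect: your reduction of (b), enlarging $H_{1}\left(A,B\right)$ to a two-spotted genus-$2$ region with both outer components of genus $\geq2$, fails in the edge case $k=1$ (and symmetrically $k=g-2$), where one complementary component is a genus-$1$ handlebody and no enlargement keeping two spots can make that side have genus $\geq2$; the paper handles this case by routing it through part (c), whose auxiliary-meridian reduction you do mention as an alternative.
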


If $H_{k}\left(  A_{1},\ldots A_{m}\right)  $ is a genus $k$ handlebody with
$m$ spots $A_{1},\ldots A_{m}$ we write $\partial \left(  H_{k}\left(
A_{1},\ldots A_{m}\right)  \right)  $ to denote $\partial H_{k}$ with the
interior $\mathrm{Int}A_{1}\cup \cdots \cup \mathrm{Int}A_{k}$ of the spots
removed. In particular, $\partial H_{2}\left(  A,B\right)  $ is a genus $2$
surface $\Sigma_{2,2}$ with two boundary components $\partial A$ and $\partial
B.$ Denote by $\mathcal{A}$ the arc complex of the boundary surface
$\Sigma_{2,2}$ of $H_{2}\left(  A,B\right)  .$

\begin{proposition}
\label{extensionF}$\phi$ induces an automorphism $\overline{\phi}%
:\mathcal{A}\rightarrow \mathcal{A}$ which is geometric on $\Sigma_{2,2}.$
\end{proposition}

Before we proceed with the proof we need to state the following three Lemmata
A, B and C.

\begin{lemmaa}
Let $\Sigma_{2,2}=\partial H_{2}\left(  A,B\right)  $ be the genus $2$ surface
with $2$ boundary components $\partial A$ and $\partial B.$ \newline%
\textbf{(a)} Let $\sigma,\sigma^{\prime}$ be non-separating simple arcs with
endpoints on $B$ which are not homotopic $\mathrm{rel\,}\partial B.$Then there
exists a simple arc $\tau$ with endpoints on $\partial A$\ and $\partial B$
such that $\tau \cap \sigma=\emptyset$ and $\tau \cap \sigma^{\prime}\neq
\emptyset.$\newline \textbf{(b)} Let $\sigma,\sigma^{\prime}$ be separating
simple arcs with endpoints on $B$ which are not homotopic $\mathrm{rel\,}%
\partial B.$ There exists a simple arc $\tau$ with endpoints on $\partial
A$\ and $\partial B$ such that $\tau \cap \sigma=\emptyset$ and $\tau \cap
\sigma^{\prime}\neq \emptyset.$
\end{lemmaa}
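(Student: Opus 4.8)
The plan is to prove both parts by a single device: isotope the pair into minimal position, cut $\Sigma_{2,2}$ along the arc $\sigma$ that $\tau$ is required to miss, identify the resulting (cut) surface, and then route a spanning arc from $\partial A$ to $\partial B$ inside it that is forced to meet $\sigma'$. First I would put $\sigma$ and $\sigma'$ in minimal position, so that they are transverse with no bigons, neither between the two arcs nor between an arc and $\partial B$; this is what guarantees that any intersection I exhibit is essential and cannot be removed by an ambient isotopy. Since any $\tau$ with $\tau\cap\sigma=\emptyset$ is the same thing as an arc living in the cut surface $\Sigma_\sigma:=\Sigma_{2,2}\setminus\sigma$, the whole problem becomes one of producing, inside $\Sigma_\sigma$, a $\partial A$-to-$\partial B$ arc crossing (the image of) $\sigma'$.

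For part (a) I would begin with the Euler-characteristic computation: cutting $\Sigma_{2,2}$ ($\chi=-4$) along a non-separating arc $\sigma$ based at $\partial B$ gives a connected surface with $\chi=-3$, and an integrality check on the genus forces $\partial B$ to split into two boundary arcs $\beta_1,\beta_2$, so that $\Sigma_\sigma\cong\Sigma_{1,3}$ with $\partial A$ surviving and each $\beta_i$ carrying one copy of $\sigma$. The decisive feature is \emph{connectedness}: $\partial A$ and both halves of $\partial B$ lie in one piece, so there is no inaccessible side. Now $\sigma'$, being essential, non-peripheral and not homotopic to $\sigma$, descends to a non-empty system of arcs in $\Sigma_\sigma$ at least one component $\sigma'_0$ of which is essential; otherwise a standard minimal-position argument would let one isotope $\sigma'$ off $\sigma$ and conclude $\sigma'\simeq\sigma$ in $\Sigma_{2,2}$, contrary to hypothesis. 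It then suffices to construct in $\Sigma_{1,3}$ a simple arc $\tau$ from $\partial A$ to the $\partial B$-portion of the boundary that meets $\sigma'_0$ essentially; this is a direct construction in a fixed small surface. Since $\tau\subset\Sigma_\sigma$ we automatically get $\tau\cap\sigma=\emptyset$, while $\tau\cap\sigma'\neq\emptyset$ because the crossing with $\sigma'_0$ survives.

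Part (b) follows the same template, but cutting along a \emph{separating} $\sigma$ disconnects $\Sigma_{2,2}$ into $P_1\sqcup P_2$ with $\partial A\subset P_1$, and now $\tau$ is confined to $P_1$. The argument goes through provided the essential piece of $\sigma'$ we wish to cross lies in $P_1$. I would first record, by an Euler-characteristic/once-holed-torus analysis of the possible splitting types, that for the genuine splitting type a separating arc disjoint from $\sigma$ and lying entirely in the component $P_2$ not containing $\partial A$ is necessarily isotopic to $\sigma$; hence a non-homotopic $\sigma'$ either meets $\sigma$ or has an essential subarc in $P_1$, and in either case a spanning arc of $P_1$ from $\partial A$ to $\partial B$ can be made to cross it. Here one must also keep track of which arc plays the role of $\sigma$, cutting along the one whose complementary component carrying $\partial A$ also detects the other.

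The hard part will be exactly this component-bookkeeping in part (b): one must rule out that $\sigma'$ hides entirely on the $P_2$ side, unreachable from $\partial A$ without crossing $\sigma$, and, when $\sigma\cap\sigma'\neq\emptyset$, verify by an outermost-arc analysis of $\sigma'\cap\sigma$ that the crossing produced by the spanning arc is genuinely essential and persists back in $\Sigma_{2,2}$. The non-separating case (a) sidesteps this difficulty completely, since there the cut surface stays connected.
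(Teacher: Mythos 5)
Your part (a) is correct and is essentially the paper's own argument: put the arcs in minimal position, cut along $\sigma$, identify the cut surface as $\Sigma_{1,3}$, and run a spanning arc from $\partial A$ across an essential descended piece of $\sigma'$; your extra care about why at least one descended component is essential is a reasonable tightening of what the paper leaves implicit.

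Part (b), however, has a genuine gap, located exactly at the point you yourself flagged as ``component bookkeeping.'' An essential separating arc $\sigma$ with endpoints on $\partial B$ has two splitting types: either $P_1$ has genus one (so $P_2$ is a once-holed torus), or $P_1$ is an annulus (so $P_2$ has genus two and one boundary circle). Your key claim, that a separating arc disjoint from $\sigma$ and contained in $P_2$ must be isotopic to $\sigma$, is true only in the first type (every essential arc in a once-holed torus is non-separating, so such an arc is boundary-parallel in $P_2$). In the second type it fails: inside the genus-two piece $P_2$ there is an essential arc $\sigma'$ with endpoints on $\partial B\cap P_2$ that splits $P_2$ into two once-holed tori; it separates $\Sigma_{2,2}$, and it is not isotopic to $\sigma$, since its complementary pieces (a once-holed torus and a genus-one surface with two boundary circles) differ from those of $\sigma$ (an annulus and a genus-two piece). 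For this ordered pair your role swap does not help, because the conclusion of the lemma is \emph{asymmetric}: $\tau$ must miss $\sigma$ and hit $\sigma'$, whereas cutting along $\sigma'$ only produces an arc missing $\sigma'$ and hitting $\sigma$. Worse, no bookkeeping can close this case: any $\tau$ from $\partial A$ to $\partial B$ disjoint from $\sigma$ is trapped in the annulus $P_1$, while $\sigma'\subset P_2$, so $\tau\cap\sigma'=\emptyset$ always. In fairness, you have put your finger on a real weak point rather than invented one: the paper's own proof dismisses the disjoint case of (b) as ``straightforward'' and only treats $\sigma\cap\sigma'\neq\emptyset$ via the component $\Sigma^{+}$ containing $\partial A$, so it does not address this configuration either. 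But as a proof of the statement for all pairs of non-homotopic separating arcs, your proposal does not go through, and the proposed swap is not a valid repair.
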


\begin{proof}
(a) Assume $\sigma,\sigma^{\prime}$ have minimal intersection and cut
$\Sigma_{2,2}$ along $\sigma$ to obtain a surface $\Sigma_{1,3}$ with 3
boundary components, namely, $\partial A,$ $\sigma \cup \partial^{+}B$ and
$\sigma \cup \partial^{-}B$ where $\partial^{+}B\cup \partial^{-}B=\partial B.$
$\sigma^{\prime}$ induces arcs $\sigma_{1}^{\prime},\ldots,\sigma_{r}^{\prime
}$ on $\Sigma_{1,3}$ with endpoints on $\sigma \cup \partial
^{+}B$ and/or $\sigma \cup \partial^{-}B.$ We may find an arc $\tau$ in 
 $\Sigma_{1,3}$ with one endpoint on $\partial A$ and the other on
$\partial^{+}B\cup \partial^{-}B$ such that $\tau \cap \sigma_{1}^{\prime}%
\neq \emptyset.$ This arc $\tau$ can be viewed as an arc in 
$\Sigma_{2,2}$ and has the desired property.

The proof in case (b) is straightforward if $\sigma,\sigma^{\prime}$ are
disjoint. If $\sigma \cap \sigma^{\prime}\neq \emptyset,$ let $\Sigma^{+}$ be the
component of $\partial H_{2}\left(  A,B\right)  \setminus \sigma$ containing
$\partial A.$ The boundary $\partial \Sigma^{+}$ is the disjoint union
$\partial A\,$\rotatebox[origin=c]{90}{$\models$}$\, \left(  \sigma \cup
B^{+}\right)  $ where $B^{+}$ is a subarc of $\partial B.$ Then, we may find
an arc $\tau$ in $\Sigma^{+}$ with endpoints in $A\ $and $B^{+}$ disjoint from
$\sigma$ which intersects $\sigma^{\prime}.$
\end{proof}

\begin{notation}
For a simple arc $\tau$ with endpoints on $\partial A$\ and $\partial B$ we
denote by $Z\left(  \tau \right)  :=A\cup_{\tau}B$ the $\left(  2,g-2\right)  $
separating meridian inside $H_{2}\left(  A,B\right)  $ obtained by joining
$\partial A,\partial B$ along the arc $\tau.$\newline
\end{notation}

\begin{lemmab}
Let $\tau$ be simple arc with endpoints on $\partial A$ and $\partial B,$ and
$\sigma$ a simple arc with endpoints on $\partial B.$ Let $E_{1},E_{2}$ be two
disjoint separating meridians inside the component $H_{k^{\prime}}\left(
B\right)  $ of $B$ (not containing $A$) such that $E_2$ and $B$ belong
to the same component of $H_g \setminus E_1 .$ Let $\sigma_{1},\sigma_{2}$ be two
disjoint arcs joining the endpoints of $\sigma$ with $\partial E_{1}$ and
$\partial E_{2}$ respectively, such that 
the interiors of
$\sigma_{1},\sigma_{2}$ are disjoint from $\partial E_1 \cup \partial E_2 .$
Then the following holds%
\[
\sigma \cap \tau=\emptyset \Leftrightarrow \left \vert \phi \left(  E\left(
\sigma \right)  \right)  \cap \phi \left(  Z\left(  \tau \right)  \right)
\right \vert =2.
\]
where $\left \vert \, \, \cdot \, \, \right \vert $ denotes the number of
connected components and $E\left(  \sigma \right)  $ the meridian obtained by
$E_{1}\cup_{\sigma_{1}\cup \sigma \cup \sigma_{2}}E_{2}.$
\end{lemmab}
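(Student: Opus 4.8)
The plan is to reduce the biconditional to a count of how the tubing arc of $E(\sigma)$ meets the meridian $Z(\tau)$, and then transport this count across $\phi$ using Lemma \ref{presrvesNUMintersection}(c). Write $\widehat{\sigma}=\sigma_{1}\cup\sigma\cup\sigma_{2}$ for the arc along which $E_{1},E_{2}$ are joined, so that $E(\sigma)=E_{1}\cup_{\widehat{\sigma}}E_{2}$. By Theorem A we may assume $\phi$ fixes $A$ and $B$; then Lemma \ref{presrvesNUMintersection}(a) gives $\phi(Z(\tau))=A\cup_{\tau^{\prime}}B=Z(\tau^{\prime})$ for some arc $\tau^{\prime}$ from $\partial A$ to $\partial B$, and $\phi(E(\sigma))=\phi(E_{1})\cup_{\widehat{\sigma}^{\prime}}\phi(E_{2})$. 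Moreover, since $\phi(A)=A$ lies in the summand of $Lk(B)$ on the side containing $A$, Lemma \ref{preserves}(c) forces $\phi$ to preserve each summand individually, so $\phi(E_{1}),\phi(E_{2})\subset H_{k^{\prime}}(B)$.

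First I would establish the purely geometric count, with no automorphism involved. Because $E_{1},E_{2},\sigma_{1},\sigma_{2}$ lie on the far side of $B$ while $\partial Z(\tau)$ runs parallel to $\partial A\cup\tau\cup\partial B$ on the $A$-side, the only contribution to $E(\sigma)\cap Z(\tau)$ comes from $\sigma$. A regular-neighborhood (band) argument gives $|E(\sigma)\cap Z(\tau)|=|\widehat{\sigma}\cap\partial Z(\tau)|=|\sigma\cap\partial Z(\tau)|$, and in minimal position $|\sigma\cap\partial Z(\tau)|=2+2|\sigma\cap\tau|$: each endpoint of $\sigma$ on $\partial B$ forces one crossing of the $\partial B$-parallel part of $\partial Z(\tau)$, and each point of $\sigma\cap\tau$ forces two further, non-removable crossings of the $\tau$-parallel part. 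Hence $|E(\sigma)\cap Z(\tau)|=2\Leftrightarrow\sigma\cap\tau=\emptyset$, and $|E(\sigma)\cap Z(\tau)|\geq4$ otherwise; the same identity holds for the pair $\sigma^{\prime},\tau^{\prime}$.

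For the forward implication, $\sigma\cap\tau=\emptyset$ gives $|\widehat{\sigma}\cap\partial Z(\tau)|=2$. Since $Z(\tau)$ is a $(2,g-2)$-meridian with $g-2\geq4\geq3$ and $E_{1},E_{2}\subset H_{g-2}(Z(\tau))$, Lemma \ref{presrvesNUMintersection}(c) applies and yields $|\widehat{\sigma}^{\prime}\cap\partial\phi(Z(\tau))|=2$; each transverse crossing of $\widehat{\sigma}^{\prime}$ with $\partial\phi(Z(\tau))$ produces exactly one arc of $\phi(E(\sigma))\cap\phi(Z(\tau))$, so $|\phi(E(\sigma))\cap\phi(Z(\tau))|=2$. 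For the converse, assume $|\phi(E(\sigma))\cap\phi(Z(\tau))|=2$. Then $\widehat{\sigma}^{\prime}$ meets $\partial Z(\tau^{\prime})$ in exactly two points, so it makes a single excursion to the genus-$2$ side of $Z(\tau^{\prime})$ and we may write $\phi(E(\sigma))=E(\sigma^{\prime})$ for an arc $\sigma^{\prime}$ with endpoints on $\partial B$. The geometric count applied to $\sigma^{\prime},\tau^{\prime}$ gives $\sigma^{\prime}\cap\tau^{\prime}=\emptyset$. Since $\phi^{-1}$ is again an automorphism fixing $A,B$ with $\phi^{-1}(E(\sigma^{\prime}))=E(\sigma)$ and $\phi^{-1}(Z(\tau^{\prime}))=Z(\tau)$, the forward implication (valid for an arbitrary such automorphism) applied to $\phi^{-1}$ gives $|E(\sigma)\cap Z(\tau)|=2$, whence $\sigma\cap\tau=\emptyset$ by the geometric count.

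I expect the main obstacle to lie in the converse: one must verify that $|\phi(E(\sigma))\cap\phi(Z(\tau))|=2$ genuinely forces $\phi(E(\sigma))$ into the normal form $E(\sigma^{\prime})$, i.e. that $\widehat{\sigma}^{\prime}$ crosses to the $A$-side exactly once rather than wrapping in a homotopically nontrivial way, and that all intersection numbers are computed in minimal position so the band count $2+2|\sigma\cap\tau|$ is exact. Confirming that the hypotheses of Lemma \ref{presrvesNUMintersection}(c)—in particular the genus bound $g-g_{D}=g-2\geq3$, which is where $g\geq6$ is used—hold for both $\phi$ and $\phi^{-1}$ is routine but should be checked explicitly.
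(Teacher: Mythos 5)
Your strategy is the paper's own: the proof in the paper consists precisely of your two ingredients, namely the geometric count $\sigma\cap\tau=\emptyset\Leftrightarrow\left\vert E\left(\sigma\right)\cap Z\left(\tau\right)\right\vert=2$ (which the paper dismisses as ``clearly'') followed by an application of Lemma \ref{presrvesNUMintersection} to the meridians $Z\left(\tau\right),E_{1},E_{2}$ joined along $\widehat{\sigma}=\sigma_{1}\cup\sigma\cup\sigma_{2}$. Your forward implication is correct and complete: with $D=Z\left(\tau\right)$ one has $g_{D}=2$, $E_{1},E_{2}\subset H_{k^{\prime}}\left(B\right)\subset H_{g-2}\left(Z\left(\tau\right)\right)$ and $g-2\geq4\geq3$, so part (c) applies. (Incidentally, part (d) can never apply with $D=Z\left(\tau\right)$, since it requires the side containing $E_{1},E_{2}$ to have genus $2$; the paper's citation ``(c,d)'' is explained below.)

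The gap is exactly where you flagged it. In the converse, the inference ``$\widehat{\sigma}^{\prime}$ meets $\partial Z\left(\tau^{\prime}\right)$ in exactly two points, so $\ldots$ we may write $\phi\left(E\left(\sigma\right)\right)=E\left(\sigma^{\prime}\right)$'' does not follow: the normal form $E\left(\sigma^{\prime}\right)$ requires that $\widehat{\sigma}^{\prime}$ cross $\partial B$ exactly twice (with its middle piece avoiding $\partial A$), and controlling $\left\vert\widehat{\sigma}^{\prime}\cap\partial Z\left(\tau^{\prime}\right)\right\vert$ says nothing about $\left\vert\widehat{\sigma}^{\prime}\cap\partial B\right\vert$: each end-piece of $\widehat{\sigma}^{\prime}$, running from $\partial\phi\left(E_{i}\right)$ to $\partial Z\left(\tau^{\prime}\right)$ inside the genus $g-2$ side, could a priori cross $\partial B$ three or more times. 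Establishing $\left\vert\widehat{\sigma}^{\prime}\cap\partial B\right\vert=2$ is itself an instance of Lemma \ref{presrvesNUMintersection} with $D=B$ and $E,F=E_{1},E_{2}\subset H_{k^{\prime}}\left(B\right)$; there the relevant genus is $g-g_{D}=k^{\prime}$, which may equal $2$, so part (d) with an auxiliary meridian $D^{\prime}$ is needed --- this is evidently why the paper cites (c,d), and it is not a routine verification. However, you can close the converse without any normal form: apply Lemma \ref{presrvesNUMintersection}(c) directly to the automorphism $\phi^{-1}$ and the pairwise disjoint meridians $Z\left(\tau^{\prime}\right)=\phi\left(Z\left(\tau\right)\right)$, $\phi\left(E_{1}\right)$, $\phi\left(E_{2}\right)$ joined along $\widehat{\sigma}^{\prime}$, whatever that arc is; the hypotheses hold because $\phi\left(E_{1}\right),\phi\left(E_{2}\right)\subset H_{k^{\prime}}\left(B\right)\subset H_{g-2}\left(Z\left(\tau^{\prime}\right)\right)$, and the conclusion concerns the joining arc for $\phi^{-1}\left(\phi\left(E_{1}\right)\right)\cup\phi^{-1}\left(\phi\left(E_{2}\right)\right)=E_{1}\cup E_{2}$, which by the uniqueness statement inside the proof of Lemma \ref{presrvesNUMintersection}(a) is $\widehat{\sigma}$ up to homotopy. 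This yields $\left\vert\widehat{\sigma}\cap\partial Z\left(\tau\right)\right\vert=2$, hence $\sigma\cap\tau=\emptyset$ by your geometric count, and the lemma follows with only part (c) applied to $\phi$ and to $\phi^{-1}$.
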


\begin{proof} Observe that the assumption on $E_2$ and $B$ (belonging
to the same component of $H_g \setminus E_1 $) implies that the interiors of
$\sigma_{1},\sigma_{2}$ are disjoint from $\partial E_1 \cup \partial E_2$ and, hence, $E_{\sigma}$ 
can be defined by $ E_{1}\cup_{\sigma_{1}\cup \sigma \cup \sigma_{2}}E_{2} .$
\newline
Clearly we have
\[
\sigma \cap \tau=\emptyset \Leftrightarrow \left \vert E\left(  \sigma \right)  \cap
Z\left(  \tau \right)  \right \vert =2
\]
and the proof follows immediately from Lemma \ref{presrvesNUMintersection}%
(c,d) applied to the meridians $Z\left(  \tau \right)  ,$ $E_{1}$, $E_{2}$ and
the arc $\sigma$ which intersects $Z\left(  \tau \right)  $ at 2 points.
\end{proof}

By the same manner as in Lemma \ref{presrvesNUMintersection} we obtain 
the following: 

\begin{lemmac}
Let $\sigma,\sigma^{\prime}$ be two simple arcs $\sigma,\sigma^{\prime}$ with
endpoints on $\partial B.$ Let $E_{1},E_{2}$ be two disjoint separating
meridians inside the component $H_{k^{\prime}}\left(  B\right)  $ of $B$ (not
containing $A$) and $\sigma_{1},\sigma_{2}$ (resp. $\sigma_{1}^{\prime}%
,\sigma_{2}^{\prime}$) two disjoint arcs joining the endpoints of $\sigma$
(resp. $\sigma^{\prime}$) with $\partial E_{1}$ and $\partial E_{2}$
respectively. Assume the arcs $\sigma_{1},\sigma_{2},\sigma_{1}^{\prime
},\sigma_{2}^{\prime}$ satisfy $\left(  \sigma_{1}\cup \sigma_{2}\right)
\cap \left(  \sigma_{1}^{\prime}\cup \sigma_{2}^{\prime}\right)  =\emptyset.$
Then the following holds%
\[
\sigma \cap \sigma^{\prime}=\emptyset \Leftrightarrow \left \vert \phi \left(
E\left(  \sigma \right)  \right)  \cap \phi \left(  E\left(  \sigma^{\prime
}\right)  \right)  \right \vert =2.
\]

\end{lemmac}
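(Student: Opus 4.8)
The plan is to follow the proof of Lemma B verbatim, with the meridian $Z(\tau)$ there replaced by the meridian $E(\sigma')$ here. Thus the argument has two parts: a purely topological equivalence that makes no reference to $\phi$, followed by a transfer of that equivalence through $\phi$ by means of Lemma \ref{presrvesNUMintersection}.

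First I would record the classical ($\phi$-free) equivalence
\[
\sigma \cap \sigma' = \emptyset \Leftrightarrow \left| E(\sigma) \cap E(\sigma') \right| = 2.
\]
To see this, regard $E(\sigma) = E_1 \cup_{\sigma_1 \cup \sigma \cup \sigma_2} E_2$ and $E(\sigma') = E_1 \cup_{\sigma_1' \cup \sigma' \cup \sigma_2'} E_2$ as two band sums of the \emph{same} disjoint disks $E_1, E_2$. Because the two summands are shared and the four connecting arcs satisfy $(\sigma_1 \cup \sigma_2) \cap (\sigma_1' \cup \sigma_2') = \emptyset$, in minimal position the two meridians meet in exactly two arcs forced by the shared pair $E_1, E_2$, together with additional arcs produced by the crossings of $\sigma$ and $\sigma'$. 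Hence $\left| E(\sigma) \cap E(\sigma') \right| = 2$ precisely when $\sigma \cap \sigma' = \emptyset$.

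Next I would transfer this equivalence through $\phi$. Applying Lemma \ref{presrvesNUMintersection}(a) to the band sum $E(\sigma') = E_1 \cup E_2$ shows that $E_1, E_2, E(\sigma')$ are pairwise disjoint and bound a $3$-ball; in particular $E_1, E_2$ are disjoint from $D := E(\sigma')$, which is the configuration demanded by Lemma \ref{presrvesNUMintersection}. Viewing $E(\sigma)$ as the band sum of $E_1, E_2$ along the connecting arc $\sigma_1 \cup \sigma \cup \sigma_2$, whose number of intersection points with $\partial D$ equals $\left| E(\sigma) \cap E(\sigma') \right|$, I would invoke Lemma \ref{presrvesNUMintersection}(c) when the component of $E(\sigma')$ containing $E_1, E_2$ has genus $\geq 3$, and Lemma \ref{presrvesNUMintersection}(d), whose hypotheses use $g \geq 6$ together with an auxiliary $\left( 4, g-4 \right)$-meridian, when that genus equals $2$; since $E_1, E_2$ are separating meridians this genus is $\geq 2$, so one of the two parts always applies, exactly as in Lemma B. Either way the conclusion of Lemma \ref{presrvesNUMintersection} gives $\left| \phi(E(\sigma)) \cap \phi(E(\sigma')) \right| = 2$ whenever $\sigma \cap \sigma' = \emptyset$.

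Finally, the reverse implication follows by applying the same reasoning to the automorphism $\phi^{-1}$: if $\left| \phi(E(\sigma)) \cap \phi(E(\sigma')) \right| = 2$ then Lemma \ref{presrvesNUMintersection} yields $\left| E(\sigma) \cap E(\sigma') \right| = 2$, and the classical equivalence forces $\sigma \cap \sigma' = \emptyset$. The step I expect to be the main obstacle is the verification of the classical equivalence, namely establishing that the minimal intersection count is exactly $2$ — rather than $0$ — when $\sigma \cap \sigma' = \emptyset$, and that every crossing of $\sigma$ and $\sigma'$ strictly increases it. This demands a careful analysis of the two band sums in minimal position, exploiting that they share $E_1, E_2$ and that the four connecting arcs are disjoint; once this count is pinned down, the transfer through $\phi$ is mechanical.
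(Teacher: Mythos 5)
Your proposal is correct and is essentially the paper's own argument: the paper proves Lemma C by the single remark that it follows ``in the same manner'' as Lemma B, i.e.\ by the obvious $\phi$-free equivalence $\sigma\cap\sigma'=\emptyset\Leftrightarrow\left\vert E\left(\sigma\right)\cap E\left(\sigma'\right)\right\vert=2$ followed by Lemma \ref{presrvesNUMintersection}(c,d) applied with $D=E\left(\sigma'\right)$, $E=E_{1}$, $F=E_{2}$ and connecting arc $\sigma_{1}\cup\sigma\cup\sigma_{2}$, exactly as you do (your case split on whether the component containing $E_{1},E_{2}$ has genus $\geq3$ or $=2$, and the use of $\phi^{-1}$ for the converse, are the details the paper leaves implicit). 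The only quibble is attributional: the fact that $E_{1},E_{2},E\left(\sigma'\right)$ are pairwise disjoint and bound a $3$-ball is established inside the proof of Lemma \ref{presrvesNUMintersection}(a) rather than being its stated conclusion, but this does not affect the argument.
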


\begin{proof} $[$of Proposition \ref{extensionF}$]$
\newline \textbf{CASE 1: }We will first define
$\phi \left(  \tau \right)  $ for $\tau$ being a simple arc properly embedded in
$\Sigma_{2,2}=\partial \left(  H_{2}\left(  A,B\right)  \right)  $ with one
endpoint in $\partial A$ and the other in $\partial B.$ For any such arc
$\tau,$
\[
Z\left(  \tau \right)  :=A\cup_{\tau}B
\]
is a $\left(  2,g-2\right)  $-separating meridian inside $H_{2}\left(
A,B\right)  $ obtained by joining $\partial A,\partial B$ along the arc
$\tau.$ Clearly, the opposite is also true: if $X$ is a $\left(  2,g-2\right)
$-separating meridian inside $H_{2}\left(  A,B\right)  $ then $X\ $splits
$H_{2}\left(  A,B\right)  $ into two components. The boundary of the genus $0$
component is a pair of pants whose boundary consists of $\partial A,\partial
B$ and $\partial X.$ There is unique homotopy class of arcs from $\partial A$
to $\partial B$ (disjoint from $\partial X$) and for any arc $\tau$ in this
class we have $\partial A\cup_{\tau}\partial B=\partial X$ and, thus,
$X=A\cup_{\tau}B.$ Since $\phi$ is an automorphism of $\mathcal{SM}\left(
H_{g}\right)  $ we have that $\phi \left(  Z\left(  \tau \right)  \right)
\subset$ $H_{2}\left(  A,B\right)  $ and by Lemma \ref{preserves}(a) it is a
$\left(  2,g-2\right)  $ meridian. By the previous argument there is a unique,
up to homotopy, arc $\overline{\tau}$ from $\partial A$ to $\partial B$
disjoint from $\partial Z\left(  \tau \right)  $ so that $A\cup_{\overline
{\tau}}B=\phi \left(  Z\left(  \tau \right)  \right)  .$ Define $\overline{\phi
}\left(  \tau \right)  :=\overline{\tau}.$

Since the correspondence $\tau \rightarrow Z\left(  \tau \right)  $ is 1-1 the
same holds for  $\overline{\phi}$  viewed as a map on the set of homotopy
classes of arcs from $\partial A$ to $\partial B.$ In other words, if $\tau
_{1},\tau_{2}$ are arcs each having one endpoint in $\partial A$ and the other
in $\partial B,$ then%
\begin{equation}
\tau_{1}\neq \tau_{2}\Leftrightarrow Z\left(  \tau_{1}\right)  \neq Z\left(
\tau_{2}\right)  \Leftrightarrow \overline{\phi}\left(  \tau_{1}\right)
\neq \overline{\phi}\left(  \tau_{2}\right)  \label{arcs1pros1}%
\end{equation}
where $\neq$ means non-homotopic. Similarly, since every $\left(
2,g-2\right)  $-separating meridians inside $H_{2}\left(  A,B\right)  $ has a
pre-image under $\phi,$
\begin{equation}
\overline{\phi}\mathrm{\ is\ onto}.\mathrm{\ }%
\label{arcsonto}%
\end{equation}
viewed as a map on the set of homotopy classes of arcs from
$\partial A$  to $\partial B.$
Moreover,
\[
\tau_{1}\cap \tau_{2}=\emptyset \Leftrightarrow \left \vert Z\left(  \tau
_{1}\right)  \cap Z\left(  \tau_{2}\right)  \right \vert =2.
\]
As $A,B$ and $Z\left(  \tau_{1}\right)  $ are pairwise disjoint and $\tau_{2}$
intersects $Z\left(  \tau_{1}\right)  $ at 2 points, by Lemma
\ref{presrvesNUMintersection}(c) we have that $\phi \left(  Z\left(  \tau
_{1}\right)  \right)  $ and $\phi \left(  Z\left(  \tau_{2}\right)  \right)  $
intersect at 2 points and, thus, $\overline{\phi}\left(  \tau_{1}\right)
\cap \overline{\phi}\left(  \tau_{2}\right)  =\emptyset.$ Working similarly for
the converse we obtain
\[
\tau_{1}\cap \tau_{2}=\emptyset \Leftrightarrow \overline{\phi}\left(  \tau
_{1}\right)  \cap \overline{\phi}\left(  \tau_{2}\right)  =\emptyset.
\]
\underline{Observation 1}\textbf{:} it is straightforward to extend the above
property for any finite collection of pairwise disjoint simple arcs $\tau
_{1},\ldots,\tau_{m}$ from $\partial A$ to $\partial B.$ \\[3mm]%
\underline{Observation 2:} For any simple arc $\rho$ with both endpoints in
$\partial B$ define $n_{\rho}$ to be the maximal number of pairwise disjoint
and non-homotopic simple arcs from $\partial A$ to $\partial B$ with each
being disjoint from $\rho$ (for example, if $\rho$ non-separating, then
$n_{\rho}=4).$ If $\rho,\rho^{\prime}$ are two arcs with endpoints in
$\partial B$ with $\rho$ non-separating and $\rho^{\prime}$ separating, then
clearly $n_{\rho}>n_{\rho^{\prime}}.$\\[2mm]\textbf{CASE 2:} In this case we
will define $\overline{\phi}\left(  \sigma \right)  $ when $\sigma$ is a simple
arc properly embedded in $\Sigma_{2,2}=\partial \left(  H_{2}\left(
A,B\right)  \right)  $ with both endpoints in $\partial B$ and which does not
separate $\Sigma_{2,2}.$ For this, we will use the whole collection of
(isotopy classes of) simple arcs $\tau$ from $\partial A$ to $\partial B$ with
the property $\tau \cap \sigma=\emptyset.$\\[2mm]We claim that there exists a
unique non separating arc $\overline{\sigma}$ with endpoints in $\partial B$
satisfying the following property%
\begin{equation}
\mathrm{for\ every\ arc\ }\tau \mathrm{\ from\ }\partial A\mathrm{\ to\ }%
\partial B\mathrm{\ with\ }\tau \cap \sigma=\emptyset \Rightarrow \overline{\phi
}\left(  \tau \right)  \cap \overline{\sigma}=\emptyset.\label{simpleproof}%
\end{equation}
For the existence of such $\overline{\sigma},$ pick any two disjoint
separating meridians $E_{1},E_{2}$ inside the component $H_{k^{\prime}}\left(
B\right)  $ of $B$ (not containing $A$). Extend $\sigma$ to an arc
$\sigma_{12}=\sigma_{1}\cup \sigma \cup \sigma_{2}$ where $\sigma_{1},\sigma_{2}$
are two disjoint arcs joining the endpoints of $\sigma$ with $\partial E_{1}$
and $\partial E_{2}$ respectively and let $E\left(  \sigma \right)  =E_{1}%
\cup_{\sigma_{12}}E_{2}.$ By Lemma \ref{presrvesNUMintersection}(c,d), the
image $\phi \left(  E\left(  \sigma \right)  \right)  $ must be of the form
$E\left(  \overline{\sigma}\right)  $ for some arc $\overline{\sigma}$ which
intersects $\partial B$ at two points. To check property (\ref{simpleproof}),
let $\tau$ be an arc from $\partial A$ to $\partial B$ with the property
$\tau \cap \sigma=\emptyset.$ Then%
\[%
\begin{array}
[c]{ccl}%
\tau \cap \sigma=\emptyset & \Leftrightarrow & \left \vert \phi \left(  E\left(
\sigma \right)  \right)  \cap \phi \left(  Z\left(  \tau \right)  \right)
\right \vert =2\\
& \Leftrightarrow & \left \vert E\left(  \overline{\sigma}\right)  \cap
Z\left(  \overline{\tau}\right)  \right \vert =2\\
& \Leftrightarrow & \overline{\sigma}\cap \overline{\phi}\left(  \tau \right)
=\emptyset
\end{array}
\]
where the first equivalence is by Lemma B, the second is just an
interpretation of $\phi \left(  E\left(  \sigma \right)  \right)  $ and
$\phi \left(  Z\left(  \tau \right)  \right)  $ and the third is
straightforward.

To see that such an arc $\overline{\sigma}$ is unique, assume $\overline
{\sigma}^{\prime}$ is an other such. Then by Lemma A(a) there exists an arc
$\alpha$ with endpoints on $A\ $and $B$ such that $\alpha \cap \overline{\sigma
}=\emptyset$ and $\alpha \cap \overline{\sigma}^{\prime}\neq \emptyset.$ Since
$\overline{\phi}$ is onto (see property \ref{arcsonto}), there exists an arc
$\tau$ with $\overline{\phi}\left(  \tau \right)  =\alpha.$ Then,
\[
\overline{\phi}\left(  \tau \right)  \cap \overline{\sigma}^{\prime}=\alpha
\cap \overline{\sigma}^{\prime}\neq \emptyset
\]
which means that $\overline{\sigma}^{\prime}$ does not satisfy property (\ref{simpleproof}),
hence, $\overline{\sigma}$ is unique with respect to property
(\ref{simpleproof}). Note also that by Observations 1 and 2, $\overline
{\sigma}$ must be non-separating.

We may now define $\overline{\phi}\left(  \sigma \right)  :=\overline{\sigma}$
where $\overline{\sigma}$ is the above described non-separating arc uniquely
determined by $\sigma.$  
\\
[3mm]\textbf{CASE 3:} In this last case we will
define $\overline{\phi}\left(  \sigma \right)  $ when $\sigma$ is an arc
properly embedded in $\Sigma_{2,2}=\partial \left(  H_{2}\left(  A,B\right)
\right)  $ with both endpoints in $\partial B$ and which separates
$\Sigma_{2,2}.$ \\[2mm]As in Case 2, we will show that there exists a unique
\emph{separating} arc $\overline{\sigma}$ with endpoints on $\partial B$
satisfying the property%
\begin{equation}
\mathrm{for\ every\ arc\ }\tau \mathrm{\ from\ }\partial A\mathrm{\ to\ }%
\partial B\mathrm{\ with\ }\tau \cap \sigma=\emptyset \Rightarrow \overline{\phi
}\left(  \tau \right)  \cap \overline{\sigma}=\emptyset.\label{simpleproofff}%
\end{equation}
Existence and uniqueness of such an arc $\overline{\sigma}$ follows
exactly as in Case 2 by using Lemma A(b) instead of A(a). By Observations 1
and 2, $\overline{\sigma}$ must be separating. We may now define
$\overline{\phi}\left(  \sigma \right)  :=\overline{\sigma}$ where
$\overline{\sigma}$ is the above described separating arc uniquely determined
by $\sigma.$

In an identical way as in Cases 2 and 3, the image of an arc with endpoints in
$\partial A$ is defined. In order to show that $\overline{\phi}$ is a well
defined automorphism of the arc complex $\mathcal{A}$ of $\Sigma_{2,2},$ it
remains to check that disjoint arcs $\sigma,\sigma^{\prime}$ are mapped to
disjoint arcs $\overline{\sigma},\overline{\sigma}^{\prime}$. This is
straightforward if $\partial \sigma \subset \partial A$ and $\partial
\sigma^{\prime}\subset \partial B.$ If $\partial \sigma \subset \partial A$ and
$\sigma^{\prime}$ has one endpoint in $\partial A$ and one in $\partial B,$
the desired property follows from Lemma B. The last case $\partial
\sigma \subset \partial B$ and $\partial \sigma^{\prime}\subset \partial B$
follows from Lemma C since $\sigma,\sigma^{\prime}$ can always be extended to
arcs $\sigma_{12}=\sigma_{1}\cup \sigma \cup \sigma_{2}$ and $\sigma_{12}%
^{\prime}=\sigma_{1}^{\prime}\cup \sigma \cup \sigma_{2}^{\prime}$ so that
$\sigma_{12}\cap \sigma_{12}^{\prime}=\emptyset.$

To check that $\overline{\phi}$ is injective, recall property (\ref{arcs1pros1}) and
observe that  $\overline{\phi},$ by its definition, respects separating (resp.
non-separating) arcs $\sigma$ with $\partial \sigma \subset \partial A$ (or
$\partial \sigma \subset \partial B$). For, if $\sigma,\sigma^{\prime}$ are
separating (resp. non-separating) arcs with  $\overline{\sigma}=\overline
{\sigma}^{\prime},$ by Lemma A(b) (resp. A(a)), there exists an arc $\tau$
from $\partial A$ to $\partial B$ such that
\[
\tau \cap \sigma=\emptyset \mathrm{\ and\ }\tau \cap \sigma^{\prime}\neq \emptyset.
\]
By Lemma B,
\[
\left \vert \phi \left(  E\left(  \sigma \right)  \right)  \cap \phi \left(
Z\left(  \tau \right)  \right)  \right \vert =2\mathrm{\ and\ }\left \vert
\phi \left(  E\left(  \sigma^{\prime}\right)  \right)  \cap \phi \left(  Z\left(
\tau \right)  \right)  \right \vert >2
\]
that is,
\[
\overline{\tau}\cap \overline{\sigma}=\emptyset \mathrm{\ and\ }\overline{\tau
}\cap \overline{\sigma}^{\prime}\neq \emptyset
\]
which contradicts the assumption $\overline{\sigma}=\overline{\sigma}^{\prime
}.$

It is shown in \cite{IrMC} that every (injective) automorphism of the arc
complex of a surface $S_{g,b}$ is geometric provided that $(g,b)\neq
(0,1),(0,2),(0,3), (1,1).$ Thus, the above defined $\overline{\phi
}:\mathcal{A}\rightarrow \mathcal{A}$ is geometric. This completes the proof of
Proposition \ref{extensionF}.
\end{proof}

Clearly, $\overline{\phi}:\mathcal{A}\rightarrow \mathcal{A}$ induces an
automorphism on the curve complex $\mathcal{C}\left(  \Sigma_{2,2}\right)  $
denoted again by $\overline{\phi}.$ We next show that $\overline{\phi}$ agrees
with $\phi$ on the meridian curves in $\Sigma_{2,2}.$

\begin{lemma}\label{ffpaula}
If $X$ is a separating meridian in $H_{2}\left(  A,B\right)  $ then
$\partial \left(  \phi \left(  X\right)  \right)  =\overline{\phi}\left(
\partial X\right)  .$
\end{lemma}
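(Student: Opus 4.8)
The plan is to show that the two curves $\partial\phi(X)$ and $\overline{\phi}(\partial X)$ are disjoint from exactly the same collection of arcs in $\Sigma_{2,2}$, and then to recover the curve from that data. Write $c=\partial X$ and $c^{\ast}=\partial\phi(X)$. Since $\phi$ fixes $A$ and $B$ and preserves the link structure (Lemma \ref{preserves}), $\phi(X)$ again lies in $H_{2}\left( A,B\right)$ and is a separating meridian of the same type; and since $\overline{\phi}$ is induced by a homeomorphism $h$ of $\Sigma_{2,2}$ fixing $\partial A$ and $\partial B$ individually, both $c^{\ast}$ and $\overline{\phi}(c)=h(c)$ are separating curves of the same topological type in $\Sigma_{2,2}$. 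Proving they coincide gives exactly $\partial\phi(X)=\overline{\phi}(\partial X)$.

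The engine is the identity $\phi\left( Z\left( \tau\right) \right) =Z\left( \overline{\phi}\left( \tau\right) \right) $ produced in Case 1 of Lemma \ref{extensionF}. First I would record the elementary principle that two meridians lying in $H_{2}\left( A,B\right)$ can be realized disjointly iff their boundary curves are disjoint, together with the observation that a curve avoiding the spots $\partial A,\partial B$ is disjoint from $\partial Z\left( \tau\right) $ iff it is disjoint from $\tau$. These give
\[
c\cap\tau=\emptyset\Leftrightarrow X\cap Z\left( \tau\right) =\emptyset\Leftrightarrow\phi\left( X\right) \cap Z\left( \overline{\phi}\left( \tau\right) \right) =\emptyset\Leftrightarrow c^{\ast}\cap\overline{\phi}\left( \tau\right) =\emptyset
\]
for every arc $\tau$ from $\partial A$ to $\partial B$. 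Because $h$ preserves disjointness we also have $c\cap\tau=\emptyset\Leftrightarrow\overline{\phi}\left( c\right) \cap\overline{\phi}\left( \tau\right) =\emptyset$, and since $\overline{\phi}$ is onto on such arcs (property (\ref{arcsonto})) it follows that $c^{\ast}$ and $\overline{\phi}\left( c\right) $ are disjoint from precisely the same arcs running from $\partial A$ to $\partial B$. I would then extend this equivalence to arcs $\sigma$ with both endpoints on $\partial B$ (and symmetrically on $\partial A$): the relevant meridian is now $E\left( \sigma\right) $, and although $E\left( \sigma\right) $ does not lie in $H_{2}\left( A,B\right)$, the only part of $\partial E\left( \sigma\right) $ meeting $\Sigma_{2,2}$ near $\partial B$ is $\sigma$, so $X\cap E\left( \sigma\right) =\emptyset\Leftrightarrow c\cap\sigma=\emptyset$; applying $\phi$ together with $\phi\left( E\left( \sigma\right) \right) =E\left( \overline{\phi}\left( \sigma\right) \right) $ (the output of Cases 2 and 3, via Lemmas B and C) gives $c^{\ast}\cap\overline{\phi}\left( \sigma\right) =\emptyset\Leftrightarrow c\cap\sigma=\emptyset$. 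Thus $c^{\ast}$ and $\overline{\phi}\left( c\right) $ are disjoint from the same arcs of every type.

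Finally I would conclude by the recovery statement that a separating simple closed curve in $\Sigma_{2,2}$ is determined up to isotopy by the set of arcs disjoint from it; granting this, $c^{\ast}=\overline{\phi}\left( c\right) $, as required. I expect this last step to be the main obstacle. Arcs anchored on the boundary cannot enter a complementary region disjoint from $\partial A\cup\partial B$ --- precisely the situation for a $\left( 1,g-1\right) $-meridian, whose genus-$1$ side is a once-holed torus containing no spot --- so a naive recovery using all closed curves fails. One must exploit that $c^{\ast}$ and $\overline{\phi}\left( c\right) $ have the same topological type and that the only separating curve inside a once-holed torus is its boundary: if the two curves are disjoint, comparing the once-holed tori (or, in the other types, the pairs of pants and genus-$1$ three-holed pieces) they cut off forces them isotopic, while if they intersect, Lemma A applied on the side meeting $\partial A,\partial B$ produces an arc disjoint from one but not the other, contradicting the equality of their disjoint-arc sets. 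Carrying out this case analysis over the types of separating meridian in $H_{2}\left( A,B\right)$ is the technical heart of the argument.
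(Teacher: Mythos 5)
Your reduction in the first two steps is sound: using $\phi\left(Z\left(\tau\right)\right)=Z\left(\overline{\phi}\left(\tau\right)\right)$ from Case 1 of Lemma \ref{extensionF}, $\phi\left(E\left(\sigma\right)\right)=E\left(\overline{\phi}\left(\sigma\right)\right)$ from Cases 2 and 3, and the fact that two meridians in a handlebody are disjoint exactly when their boundary curves are, you correctly conclude that $c^{\ast}=\partial\phi\left(X\right)$ and $\overline{\phi}\left(\partial X\right)$ are disjoint from precisely the same isotopy classes of arcs. But the proof then rests entirely on the recovery claim --- that a separating curve of meridian type in $\Sigma_{2,2}$ is determined up to isotopy by the set of arcs disjoint from it --- and this claim is never proved; you yourself flag it as ``the main obstacle'' and ``the technical heart.'' That is a genuine gap, and the sketch offered does not close it. Lemma A cannot simply be ``applied'' here: it compares two non-homotopic \emph{arcs} with endpoints on $\partial B$, not a simple closed curve against arcs, so the intersecting case needs a new statement of the form ``for non-isotopic separating curves $c_{1},c_{2}$ there is an arc based at the spots disjoint from $c_{1}$ and essentially intersecting $c_{2}$.'' Proving that requires showing that arcs with endpoints on $\partial A\cup\partial B$ fill the complement of the once-holed torus cut off by $c_{1}$ (up to the boundary-parallel annulus), and that intersections produced inside that complement remain essential in all of $\Sigma_{2,2}$; neither point is addressed. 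Moreover, in the disjoint case, ``comparing the once-holed tori forces them isotopic'' is not correct as stated: two disjoint, non-isotopic $\left(1,g-1\right)$-meridian boundaries cutting off disjoint tori certainly exist, and ruling out that configuration again needs a distinguishing arc (one entering the second torus and winding around its genus), i.e., the same unproved detection statement.

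For comparison, the paper avoids the recovery problem altogether. For a $\left(2,g-2\right)$-meridian the statement is immediate from the definition of $\overline{\phi}$ in Case 1. For a $\left(1,g-1\right)$-meridian $X$ the paper picks a disjoint $\left(1,g-1\right)$-meridian $Z$, arcs $\tau_{1},\tau_{2}$ from $\partial A$ to $\partial B$ disjoint from $\partial X\cup\partial Z$, and an arc $\tau_{3}$ disjoint from $\partial X,\tau_{1},\tau_{2}$ but meeting $\partial Z$; then $\overline{\tau_{1}}\cup\overline{\tau_{2}}$ cuts $\Sigma_{2,2}$ into two pieces of type $\left(1,1\right)$, both $\overline{\phi}\left(\partial X\right)$ and $\partial\phi\left(X\right)$ are trapped in the same piece $\partial^{+}$ (the latter because a separating curve in $\partial^{-}$ would have to cross the essential arc $\overline{\tau_{3}}$), and uniqueness of the separating curve in a once-holed torus finishes the proof in one stroke. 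Your strategy could plausibly be completed, but the missing recovery lemma is comparable in difficulty to the lemma being proved, whereas the paper's trapping argument uses only facts already established.
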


\begin{proof}
If $X$ is a $\left(  2,g-2\right)  $ meridian then, as explained in the proof
of Proposition \ref{extensionF}, Case 1, $X$ is of the form $Z\left(  \tau \right)
=A\cup_{\tau}B$ for some (unique) arc $\tau$ with endpoints on $A\ $and $B$
and the result follows from the definition of $\overline{\phi}.$ If $X$ is a
$\left(  1,g-1\right)  $ meridian, we may choose a $\left(  1,g-1\right)  $
meridian $Z$ disjoint from $X,$ mutually disjoint non-parallel arcs $\tau_{1},\tau_{2}$ with endpoints on $\partial A$ and $\partial B$ not intersecting $\partial Z,\partial X$ and an arc $\tau_{3}$ with endpoints on $\partial A $ and $\partial B$ disjoint from $\partial X,\tau_{1}%
,\tau_{2}$ but with $\tau_{3}\cap \partial Z\neq \emptyset.$ Then, $\Sigma
_{2,2}\setminus \left(  \overline{\tau_{1}}\cup \overline{\tau_{2}}\right)  $
has two components, say $\Sigma^{+},\Sigma^{-},$ each of type $\left(
1,1\right)  .$ By construction, $\overline{\phi}\left(  \partial X\right)  $
must lie in one of the components, say $\Sigma^{+},$ and $\overline{\phi
}\left(  \partial Z\right)  $ as well as $\overline{\tau_{3}}$ must be in
$\Sigma^{-}.$ As there is only one simple separating curve in a surface of
type $\left(  1,1\right)  ,$ which is in fact a meridian curve (parallel to
the boundary), $\overline{\phi}\left(  \partial X\right)  $ is meridian curve.
On the other hand, $\phi \left(  X\right)  $ is a meridian disjoint from
$\overline{\tau_{1}},\overline{\tau_{2}},\overline{\tau_{3}},$ hence, its
boundary is also contained in $\Sigma^{+}.$ This completes the proof.
\end{proof}

\begin{proof}
$[$of Proposition \ref{mainlemmaH2}$]$\newline
\textit{Notation.} For a pair of simple closed curves $\sigma$ and $\tau$ in $\partial H_{2}\left(  A,B\right)$ with $|\sigma \cap \tau|=1 ,$
denote by $N(\sigma ,\tau)$ a tubular neighborhood of the union $\sigma \cup \tau .$ We view $N(\sigma ,\tau)$ as 
a genus $1$ subsurface of $\Sigma_{2,2}$ with single boundary $\partial N(\sigma ,\tau) .$
\newline
(a) Let $X$ be a $\left(1,g-1\right)  -$ meridian in $H_{2}\left(  A,B\right)  $ and $D=\delta \left(
X\right)  $ the (unique) non-separating meridian in $H_{1}\left(  X\right)  .$
We may assume, by Theorem A, that $\phi$ fixes $X$ and, by the above
Lemma, $\overline{\phi}$ fixes $\partial X$ and we will show that
$\overline{\phi}\left(  \partial D\right) = \partial D .$
Assume, on the contrary, that $\overline{\phi}\left(  \partial D\right) \neq \partial D .$

Pick a simple closed curve $\alpha \in\Sigma_{2,2} $ intersecting $\overline{\phi}\left(  \partial D\right)$ 
once such that the boundary curve $\beta := \partial N\left( \alpha ,\overline{\phi}\left(  \partial D\right) \right) $
is not a meridian in  $H_{2}\left(  A,B\right). $ By Proposition \ref{extensionF},
$\overline{\phi}$ is a geometric automorphism on the arc complex, thus, it is induced by a homeomorphism, say $\overline{f},$ of 
$H_{2}\left(  A,B\right) .$ It follows that the inverse image 
$\left(\overline{f}\right)^{-1}\hskip-1.5mm \left(  N\left( \alpha ,\overline{\phi}\left(  \partial D\right) \right) \right)$
is a tubular neighborhood 
%$N\left(\left(\overline{f}\right)^{-1}\hskip-2mm \left( \alpha\right) ,\partial D\right)$ 
of the curves 
$\left(\overline{f}\right)^{-1} \hskip-2mm\left( \alpha\right)$ and $\partial D$ whose boundary is 
$\left(\overline{\phi}\right)^{-1}\hskip-2mm (\beta).$

Since $D$ is a (non-separating) meridian the curve  $\left( \overline{\phi}\right)^{-1}\left( \beta \right) $
is homotopically trivial, that is, it bounds a separating meridian, say $Z.$ Then 
$\partial \left(  \phi \left(  Z\right)  \right) $ clearly bounds a disk but $ \overline{\phi}\left( \partial Z\right) = \beta  $ does not
by construction. 
 This contradicts Lemma 
\ref{ffpaula}.
\newline(c) This follows directly from part (a) because, given a
$\left(  2,g-2\right)  $-separating meridian $Z$ we may find disjoint
separating meridians $A,B\subset H_{g-2}\left(  Z\right)  $ such that $A,B$
and $Z$ bound a $3$-ball. Then, $X,Y\subset H_{2}\left(  A,B\right)
.$\newline(b) If $k>3,$ this follows directly from part (a) because we may
find a meridian $A$ such that  
$H_{2}\left(  A,B^{\prime}\right)\supset H_{1}\left(  A^{\prime},B^{\prime}\right)  $ 
and (a) applies to $H_{2}\left(  A,B^{\prime}\right).$ If $k=2$ then we may find a
meridian $B$ such that  
$H_{2}\left(  A^{\prime} ,B\right)\supset H_{1}\left(  A^{\prime},B^{\prime}\right)  $ 
and (a) applies to $H_{2}\left(  A^{\prime}, B\right).$ 
If $k=1$ then $B$ is a $\left(
2,g-2\right)  $-meridian and the result follows from part (c).
\end{proof}

\section{Extension to the Complex of Meridians\label{terminal_extension}}

In order to extend an automorphism $\phi:\mathcal{SM}\left(  H_{g}\right)
\rightarrow \mathcal{SM}\left(  H_{g}\right)  $ to an automorphism
$\mathcal{M}\left(  H_{g}\right)  \rightarrow \mathcal{M}\left(  H_{g}\right)
$ we will need the following generalization of Proposition \ref{mainlemmaH2}:

\begin{theorem}
\label{maindelta}Let $X,Y$ be $\left(  1,g-1\right)  $-separating meridians in
$H_{g}$ and $\phi$ an automorphism of $\mathcal{SM}\left(  H_{g}\right)  $.
Then
\[
\delta \left(  X\right)  =\delta \left(  Y\right)  \Rightarrow \delta \left(
\phi \left(  X\right)  \right)  =\delta \left(  \phi \left(  Y\right)  \right)
.
\]

\end{theorem}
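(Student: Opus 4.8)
The plan is to fix the common core $D:=\delta\left(X\right)=\delta\left(Y\right)$ and to prove that the assignment $X\mapsto\delta\left(\phi\left(X\right)\right)$ is constant on the set $\mathcal{D}:=\left\{X:\delta\left(X\right)=D\right\}$ of all $\left(1,g-1\right)$-separating meridians whose genus $1$ core is $D$. Proposition \ref{mainlemmaH2}(c) already shows that this assignment agrees on any pair $X,Y\in\mathcal{D}$ contained in a common genus $2$ handlebody $H_{2}\left(Z\right)$ cut off by a $\left(2,g-2\right)$-meridian $Z$ (and, via parts (a),(b), in the remaining genus $2$ configurations). Thus it suffices to join any $X,Y\in\mathcal{D}$ by a finite chain $X=X_{0},X_{1},\ldots,X_{n}=Y$ with each $X_{i}\in\mathcal{D}$ and each consecutive pair $X_{i},X_{i+1}$ lying in a common genus $2$ region; then $\delta\left(\phi\left(X_{i}\right)\right)=\delta\left(\phi\left(X_{i+1}\right)\right)$ for every $i$ and the theorem follows by transitivity.

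The key geometric translation is that a meridian $X\in\mathcal{D}$ is encoded by a spanning arc: cutting $H_{g}$ along $D$ produces a genus $g-1$ handlebody with two spots $D^{+},D^{-}$, and $H_{1}\left(X\right)$ is a regular neighborhood of $D$ together with an unknotted arc $a_{X}$ joining $D^{+}$ to $D^{-}$. If two such arcs $a_{X},a_{Y}$ are disjoint, then a regular neighborhood of $D\cup a_{X}\cup a_{Y}$ reglues to a genus $2$ handlebody $G$ whose boundary $\partial G$ is a $\left(2,g-2\right)$-meridian $Z$ with $X,Y\subset H_{2}\left(Z\right)$, so that Proposition \ref{mainlemmaH2}(c) applies directly. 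Hence it is enough to connect $a_{X}$ to $a_{Y}$ through a sequence of spanning arcs in which consecutive arcs are disjoint (equivalently, consecutive meridians of $\mathcal{D}$ share a genus $2$ region).

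First I would treat the case where $X$ and $Y$ are disjoint from a single common non-separating cut system $\mathbb{C}=\left\{D=C_{1},C_{2},\ldots,C_{g}\right\}$ — the ``complement of a cut system'' case. Here both spanning arcs lie in the complementary spotted ball, and grouping the handle $D$ with one neighbouring handle yields a $\left(2,g-2\right)$-meridian $Z$, disjoint from $C_{3},\ldots,C_{g}$, with $X,Y\subset H_{2}\left(Z\right)$; Proposition \ref{mainlemmaH2}(c) then gives the conclusion. For arbitrary $X,Y$ I would pass between a cut system adapted to $X$ and one adapted to $Y$ using the connectedness of the complex of cut systems (see \cite{Wa}). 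Extending $\left\{D\right\}$ to cut systems $\mathbb{C}_{X},\mathbb{C}_{Y}$ with $C_{2},\ldots,C_{g}\subset H_{g-1}\left(X\right)$ (respectively for $Y$), I would connect $\mathbb{C}_{X}$ to $\mathbb{C}_{Y}$ by a path of elementary moves that fixes the meridian $D$ throughout; such a path exists because the cut systems containing $D$ form the cut system complex of the genus $g-1$ handlebody obtained by cutting $H_{g}$ along $D$, which is again connected (here $g\geq6$ guarantees $g-1\geq5$). Each elementary move alters a single non-separating meridian and can be realized so that the genus $1$ meridians of $\mathcal{D}$ before and after the move share a common genus $2$ region, producing the required chain.

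The main obstacle is precisely this last translation: converting an abstract elementary move of cut systems into a concrete pair $X_{i},X_{i+1}\in\mathcal{D}$ that lie in a common $H_{2}\left(Z\right)$, while never disturbing the core $D$. Controlling the spanning arcs so that consecutive ones can be made disjoint (so that Proposition \ref{mainlemmaH2} is applicable), and verifying that the ``complement of a cut system'' case genuinely supplies the $\left(2,g-2\right)$-meridian required, is where the geometric care lies; it is also the step that depends essentially on the hypothesis $g\geq6$ through Proposition \ref{mainlemmaH2}.
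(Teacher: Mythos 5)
Your outline reproduces the paper's strategy in broad strokes (reduce everything to Proposition \ref{mainlemmaH2}, settle first the complement-of-a-cut-system case, then globalize via Wajnryb's connectivity applied to cut systems containing $D$, which indeed form the cut system complex of the spotted genus $g-1$ handlebody), but the geometric claim that is supposed to power every step of your chain is false. If $a_{X},a_{Y}$ are disjoint spanning arcs, the frontier of a regular neighborhood $G$ of $D\cup a_{X}\cup a_{Y}$ is \emph{not} a $\left(  2,g-2\right)  $-meridian: $\partial G$ is a closed genus $2$ surface with $\chi=-2$, and $G\cap\partial H_{g}$ is a regular neighborhood of $\partial D\cup a_{X}\cup a_{Y}$ (a circle with two chords), also with $\chi=-2$, so the frontier has Euler characteristic $0$; it is an annulus (or a pair of annuli), never a disk. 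The failure is not merely in your argument but in the intended statement: the desired $Z$ need not exist at all. Cut $H_{g}$ along $D$ to obtain a genus $g-1$ handlebody $H^{\prime}$ with two spots, take a disk-busting simple closed curve on $\partial H^{\prime}$ (one meeting every essential disk of $H^{\prime}$), isotope it to run through both spots, and let $a_{X},a_{Y}$ be its two complementary arcs. These are legitimate disjoint spanning arcs, yet any $\left(  2,g-2\right)  $-meridian $Z$ with $X,Y\subset H_{2}\left(  Z\right)  $ would have to be disjoint from $D\cup a_{X}\cup a_{Y}$ and would therefore produce an essential disk of $H^{\prime}$ missing that curve (the cases where $\partial Z$ becomes inessential in $\partial H^{\prime}$ are easily excluded for $g\geq6$), a contradiction. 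A further sign that the picture is off: two non-isotopic $\left(  1,g-1\right)  $-meridians with the same core can never be realized disjointly --- disjointness forces one genus $1$ side to contain the other, and cutting along the common core shows they are parallel --- so the two meridians never sit side by side inside the claimed region in the way your construction suggests.

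The same overreach sinks your treatment of the cut-system case, which you dispatch in one sentence but which is the technical heart of the paper (Lemma \ref{lemmadelta}). There $X=C_{1}\cup_{\sigma}C_{1}$ and $Y=C_{1}\cup_{\sigma^{\prime}}C_{1}$, where $\sigma,\sigma^{\prime}$ wander among all $2g$ spots of the complementary ball; they need not be disjoint from each other, and even when they are, a meridian obtained by ``grouping $D$ with one neighbouring handle'' contains $X$ and $Y$ only if both arcs happen to lie in the region it cuts off --- which fails whenever the arcs separate the remaining spots nontrivially (e.g.\ some spots of $\mathbb{C}$ on each side). This is exactly why the paper does not exhibit a single common genus $2$ region but instead builds a \emph{chain} $X,X\left(  g\right)  ,\ldots,X\left(  2\right)  ,\ldots$ using the auxiliary meridians $E^{g-1}$, repeated stripe eliminations, and a final induction on $\left\vert X(2)\cap Z_{1}\right\vert $ that feeds into Proposition \ref{mainlemmaH2}(b), not (c). Finally, the passage from an elementary move of cut systems to a usable step of the chain --- which you yourself flag as ``the main obstacle'' and leave unresolved --- is the content of the paper's Lemma \ref{lemmadeltaadjacent}, proved by a two-case construction of explicit intermediate meridians ($C_{X,Y}$, respectively $X_{1},Y_{1}$) according to whether the two spots of $\Delta$ land on the same component. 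So, while your skeleton is the right one, both load-bearing steps are respectively wrong and missing.
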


\noindent Our first task is to generalize Proposition \ref{mainlemmaH2} in the
case where the $\left(  1,g-1\right)  $-separating meridians do not 
intersect a cut system in $H_{g}.$ We will use the following

\begin{terminology}\label{terminology}
(a) Let $Z$ be a genus $\left(  1,g-1\right)  -$meridian and $E$ any separating
meridian disjoint from $\delta \left(  Z\right)  $. A component of
$H_{1}\left(  Z\right)  \cap E$ is called a stripe if it can be isotoped to
the boundary of $H_{1}\left(  Z\right)  .$ We say that $E$ intersects $Z$ in
stripes if every component of $H_{1}\left(  Z\right)  \cap E$ is a
stripe. 

(b) Clearly, any two stripes are parallel, that is, their boundary arcs are isotopic $rel(\partial Z) .$ We will be saying that two (parallel) stripes
$S_1 , S_2 \subset H_{1}\left(  Z\right)\cap E$ are \emph{not nested} if the 
corresponding stripes on the boundary of $H_1 (Z)$ are disjoint and \emph{nested} otherwise.

(c) If $E$ intersects $Z$ in a single stripe we define a
stripe elimination $\overline{E}$ of $E$ to be the meridian obtained from $E$
as shown in Figure 3. There are two non-isotopic ways to perform a stripe
elimination. However, for our purposes, this ambiguity will be irrelevant.
Clearly, $\overline{E}$ is disjoint from $Z.$ Note also that stripe elimination does alter the topological type of a separating meridian, that is, 
\begin{equation}
 E \mathrm{\ is\ a\ } (k,g-k)\mathrm{-meridian} \Leftrightarrow
  \overline{E} \mathrm{\ is\ a\ } (k,g-k)\mathrm{-meridian}
\end{equation}

(d) If $E$ intersects $Z$ in several not nested stripes we may perform a stripe elimination on the outermost stripe and this can be done repeatedly for any 
number of stripes. 

(e) If $E$ intersects $Z$ in stripes and several stripes are nested inside 
an outermost stripe $S\subset H_{1}\left(  Z\right)$ we may perform a stripe elimination on $S$ by eliminating simultaneously all stripes nested in $S.$  

(f) Let $E_1 $ be a  separating meridian intersecting $Z$ in stripes such that the stripes in $H_{1}\left(  Z\right)\cap E_1$ are not nested and let $E_2$ 
be an other separating meridian disjoint from $E_2$ possibly with nested stripes. 
If none of the stripes of $E_1$ is nested inside a stripe of $E_2$ a stripe 
elimination $\overline{E}_2$ of $E_2$ can be performed on an outermost 
stripe of $E_2 $ (see part (e) above) and the resulting meridian satisfies 
$ E_1 \cap \overline{E}_2 =\emptyset .$
\end{terminology}

\begin{figure}[ptb]
\begin{center}
\includegraphics[scale=1.3]
{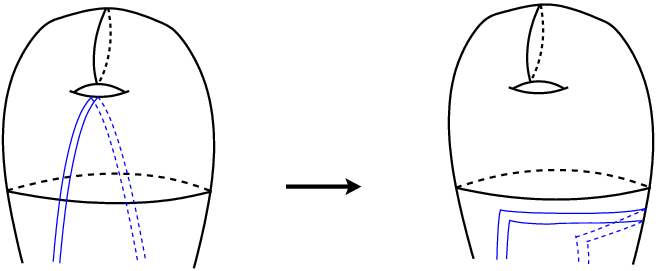}
\end{center}
\par
\begin{picture}(22,12)
\put(42,36){$E$}
\put(323,36){$\overline{E}$}
\put(330,198){$\delta \left(  Z\right)$}
\put(62,195){$\delta \left(  Z\right)$}
\put(139,72){$Z$}
\put(412,74){$Z$}
\end{picture}
\caption{Stripe elimination $\overline{E}$ of $E$ }%
\end{figure}

\begin{lemma}
\label{lemmadelta}Let $\mathbb{C}$ $=\left \{  C_{1},\ldots,C_{g}\right \}  $ be
a cut system in $H_{g},$ $X,Y$ be $\left(  1,g-1\right)  $-separating
meridians both disjoint from $\mathbb{C}$ and $\phi$ an automorphism of
$\mathcal{SM}\left(  H_{g}\right)  $. Then
\begin{equation}
\delta \left(  X\right)  =\delta \left(  Y\right)  \Rightarrow \delta \left(
\phi \left(  X\right)  \right)  =\delta \left(  \phi \left(  Y\right)  \right)  .
\tag{$\ast$}\label{star}%
\end{equation}

\end{lemma}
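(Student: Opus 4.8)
The goal is to upgrade Proposition \ref{mainlemmaH2}, which handles $(1,g-1)$-meridians $X,Y$ living inside a fixed genus $2$ handlebody $H_2(A,B)$, to the situation where $X,Y$ merely lie in the complement of a cut system $\mathbb{C}=\{C_1,\ldots,C_g\}$. The plan is to reduce the complement-of-a-cut-system case to the genus $2$ handlebody case of the Proposition. The central difficulty is that $X$ and $Y$, although disjoint from $\mathbb{C}$, need not jointly lie inside any single genus $2$ subhandlebody $H_2(A,B)$ of the type demanded by Proposition \ref{mainlemmaH2}; the shared non-separating meridian $\delta(X)=\delta(Y)=:C$ is, after a change of cut system, one of the $C_i$, but $X$ and $Y$ can wrap around the rest of the handlebody quite differently.

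First I would normalize the picture. Since $\delta(X)=\delta(Y)=:C$ is a non-separating meridian disjoint from $X,Y$, and $X$ (resp. $Y$) is the unique $(1,g-1)$-meridian with $C\subset H_1(X)$, I would arrange $C$ to be a member of the cut system, say $C=C_1$, so that cutting along $\mathbb{C}$ gives the spotted ball $H_g\setminus\mathbb{C}$ with the two spots coming from $C_1$ visible on its boundary. Both $\partial X$ and $\partial Y$ then descend to separating curves on the boundary sphere-with-$2g$-holes that separate the two $C_1$-spots from the remaining $2g-2$ spots. Using Theorem A, after composing $\phi$ with a homeomorphism I may assume $\phi$ is the identity on every separating meridian disjoint from $\cup C_i$; in particular $\phi$ fixes a convenient reference meridian.

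The heart of the argument is to produce disjoint separating meridians $A,B$ with $X,Y\subset H_2(A,B)$, reducing to Proposition \ref{mainlemmaH2}(a), by working relative to the stripe terminology just introduced. I would choose $A$ to be a separating meridian cutting off a genus $2$ piece containing $C$ together with the ``$X$-handle,'' and analogously build the second wall $B$; the $(1,g-1)$-meridian $X$ satisfies $C\subset H_1(X)$ and $\partial X$ sits in the annular neighborhood of $\partial C$ on $\partial H_2(A,B)$. The obstacle is that $Y$ may intersect the walls $A,B$. This is exactly what the stripe machinery is designed to handle: any separating meridian $E$ disjoint from $\delta(Z)=C$ meets a surrounding $(1,g-1)$-meridian in stripes, and stripe-elimination replaces $E$ by a disjoint meridian $\overline{E}$ without disturbing the relevant $\delta$-data. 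So I would express $Y$ in terms of a meridian already inside $H_2(A,B)$ via a sequence of stripe-eliminations along $C$, apply Proposition \ref{mainlemmaH2}(a) to conclude $\delta(\phi(X))=\delta(\phi(Y_0))$ for the stripe-eliminated model $Y_0$, and then check that stripe-elimination commutes with $\phi$ in the sense that it does not change the value of $\delta\circ\phi$.

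The step I expect to be the main obstacle is precisely this last commutation claim: showing that if $E'$ is obtained from $E$ by a single stripe-elimination along $C$, then $\phi(E)$ and $\phi(E')$ carry the same $\delta$-information, despite the two non-isotopic choices of stripe-elimination. I would handle this by exhibiting the stripe as a genus $2$ configuration $H_2(A,B)$ to which Proposition \ref{mainlemmaH2} applies directly, so that the two meridians involved in the elimination share a common non-separating meridian and hence their images do too. Once the single-stripe case is established, the statement for several stripes follows by induction on the number of stripes, ``performing a stripe elimination on the outer most stripe'' as noted in the terminology, and the implication $(\ast)$ follows by transitivity of the relation $\delta(\phi(\,\cdot\,))=\delta(\phi(\,\cdot\,))$ along the chain of eliminations connecting $Y$ to a meridian lying in the same $H_2(A,B)$ as $X$.
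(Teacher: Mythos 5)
Your outline assembles the same ingredients as the paper's proof: normalizing so that $\delta(X)=\delta(Y)$ is a member $C_1$ of the cut system (note this follows from an argument, not from a ``change of cut system'': one must rule out that $\delta(X)$ lies in the spotted ball $H_g\setminus\mathbb{C}$, where it would be separating and hence could not meet a simple closed curve once), stripe eliminations, Proposition \ref{mainlemmaH2} applied to consecutive members of a chain, and transitivity. Your resolution of the ``commutation'' step --- placing the two meridians involved in a single stripe elimination inside a common genus $2$ configuration and invoking Proposition \ref{mainlemmaH2} --- is also how the paper argues. But the reduction you wrap around these ingredients has a genuine gap.

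Stripe elimination is defined relative to the solid tori $H_1(Z_i)$: it reduces intersections of $Y$ with the $Z_i$, not with walls $A,B$ that were chosen in advance to fit $X$. So nothing forces the stripe-eliminated meridian $Y_0$ to land inside your fixed $H_2(A,B)$, and the endgame cannot be repaired by choosing the walls more cleverly: writing $X=C_1\cup_{\sigma}C_1$ and $Y_0=C_1\cup_{\sigma'}C_1$, the curves $\sigma,\sigma'$ may traverse the spotted ball $H_g\setminus\mathbb{C}$ in incompatible ways, so that $\partial X\cup\partial Y_0$ fills a subsurface of $\partial H_g$ of genus larger than $2$; then no genus $2$ spotted handlebody contains both. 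In other words, two $(1,g-1)$-meridians with the same $\delta$, both disjoint from $\mathbb{C}$, need not share a genus $2$ home --- this is precisely why the lemma does not reduce in one step to Proposition \ref{mainlemmaH2}(a). The paper avoids a direct comparison of $X$ with $Y$: it routes each of them to the fixed reference meridian $Z_1$ (the separating cut-system element with $\delta(Z_1)=C_1$), and along that route the genus $2$ walls are not fixed but rebuilt at every step --- the $(2,g-2)$-meridian $E^{g-1}$ is constructed from arcs joining the spots $C_2^{-},C_2^{+},\ldots,C_{g-1}^{-},C_{g-1}^{+}$ chosen disjoint from $\partial X$ --- and when $E^{g-1}$ itself crosses $Z_g$ it must first be stripe-eliminated too (the interleaved ``outermost stripe'' argument). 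This per-step wall construction is exactly what your phrase ``exhibiting the stripe as a genus $2$ configuration'' takes for granted. Finally, even after $X$ has been made disjoint from $Z_2,\ldots,Z_g$, the comparison with $Z_1$ is not an application of part (a) at all, but a separate induction on $\left\vert X(2)\cap Z_1\right\vert$, rerouting along a stripe parallel to a subarc of $\partial Z_1$ and invoking Proposition \ref{mainlemmaH2}(b) at each stage. These two constructions, which your proposal leaves unaddressed, are the technical core of the lemma.
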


\begin{proof}
Pick a separating cut system $\mathbb{Z=}\left \{  Z_{1},\ldots,Z_{g}\right \}
$ so that $C_{i}$ is the unique non-separating meridian in $H_{1}\left(
Z_{i}\right)  ,$ that is, $\delta \left(  Z_{i}\right)  =C_{i}.$ 
As the collection  
$\left \{  \phi \left( Z_{1}\right) ,\ldots,\phi \left( Z_{g}\right)\right \}$ 
is again a separating cut system 
we may assume that $\phi$ fixes $\mathbb{Z}$.
\\[4mm]\noindent 
Claim 1: Any
$\left(  1,g-1\right)  $-separating meridian $X$ disjoint from $\mathbb{C}$
has the form $C_{i}\cup_{\sigma}C_{i}$ for some $C_{i}\in \mathbb{C}$ and
$\sigma$ a simple closed curve
in $X$ intersecting $C_{i}$ at a single point. 
\\[3mm]
Proof of Claim 1: Set
$\Delta=\delta(X)$ and let $\sigma$ be the simple closed curve so that
$X=\Delta \cup_{\sigma}\Delta.$
Suppose $\Delta \cap \mathbb{Z}\neq \emptyset$. Then, since $\Delta$ and $X$ are
disjoint from $\mathbb{C},$ we have that $\Delta$ and $\sigma$ lie in the
spotted $3-$ball $H_{g}\setminus \mathbb{C}.$ However, $\Delta$ would be a
separating meridian in $H_{g}\setminus \mathbb{C}$ making $\lvert \partial
\Delta \cap \sigma \rvert=1$ impossible. Thus $\Delta \cap \mathbb{Z}=\emptyset$
which implies that either $\Delta=C_{i}$ for some $C_{i}\in \mathbb{C}$ or,
$\Delta$ lies in the spotted $3-$ball $H_{g}\setminus \mathbb{C}.$ As above,
the latter case is impossible and this completes the proof of Claim
1.
\\[3mm]
Therefore, it suffices to show the result for the meridians $Z_{1}$
and $X$ where $X$ is a $\left(  1,g-1\right)  $-separating meridian in
$H_{g}\setminus \mathbb{C}$ formed by $C_{1}\cup_{\sigma}C_{1}$ where $\sigma$
is a simple closed curve intersecting $C_{1}$ at a single point and
$\sigma \cap C_{i}=\emptyset$ for all $i=2,\ldots,g.$ 
Clearly, such a meridian $X$ intersects $H_1 (Z_i), i=2,\ldots,g$ in parallel stripes which are not nested.
\\[4mm] \noindent 
Claim 2: there exists a $\left(  1,g-1\right)  $-separating meridian $X\left(
g\right)  $ in $H_{g}\setminus \mathbb{C}$ disjoint from $Z_{g}$ such that
(\ref{star}) holds for $X\ $and $X\left(  g\right)  .$ 
\\[3mm]
Proof of Claim 2: Cut along $C_{2},\ldots,C_{g}$ to obtain a genus $1$ handlebody, denoted by
$H\left(  Z_{1}\right)  ,$ with $2\left(  g-1\right)  $ spots denoted by
$C_{2}^{-},C_{2}^{+},C_{3}^{-},C_{3}^{+},\ldots,C_{g}^{-},C_{g}^{+}.$ The
boundary curve $\partial X$ is a separating curve on the boundary of $H\left(
Z_{1}\right)  $ and all $2\left(  g-1\right)  $ spots belong to the same
component. Therefor, we may connect the following sequence of spots%
\[
C_{2}^{-},C_{2}^{+},C_{3}^{-},C_{3}^{+},\ldots,C_{g-1}^{-},C_{g-1}^{+}%
\]
by simple arcs $\tau_{i,i},$ $i=2,\ldots,g-1$ with endpoints on $C_{i}%
^{-},C_{i}^{+}$ and arcs $\tau_{i,i+1},i=2,\ldots,g-2$ with endpoints on
$C_{i}^{+},C_{i+1}^{-}$ so that all these arcs are pairwise disjoint and the
interior of each is disjoint from $\partial X,$ disjoint from $C_{i}^{+}%
,C_{i}^{-}$ for all $i=2,\ldots,g-1$ and disjoint from $C_{g}^{-},C_{g}^{+} .$
We may now successively construct the meridians
\[
E^{2}=C_{2}^{-}\cup_{\tau_{2,2}}C_{2}^{+},E^{3}=E^{2}\cup_{\tau_{2,3}}%
C_{3}^{-}\cup_{\tau_{3,3}}C_{3}^{+},\ldots
\]
to obtain the separating meridian
\[
E^{g-1}=E^{g-2}\cup_{\tau_{g-2,g-1}}C_{g-1}^{-}\cup_{\tau_{g-1,g-1}}%
C_{g-1}^{+}%
\]
which is of type $\left(  2,g-2\right)  .$ By construction, $H_{g-2}\left(
E^{g-1}\right)  $ contains $C_{2},\ldots,C_{g-1}$ whereas $H_{2}\left(
E^{g-1}\right)  $ contains $X$ and $C_{g}.$ Moreover, both $X$
and $E^{g-1}$ intersect $Z_{g}$ in stripes and, as mentioned above, the stripes
in $H_1(Z) \cap X$ are not nested. Again by construction, none of the stripes in $H_1(Z) \cap X$ is nested inside a stripe in $H_1(Z) \cap E^{g-1} .$
\\[2mm]
Case I: $E^{g-1}$ does not intersect $Z_{g}.$ In this case we may perform repeatedly stripe eliminations on $X$ to obtain a
$\left(  1,g-1\right)  -$meridian $\overline{X}$ which does not intersect
$Z_{g}.$ Clearly, $X$ and $X\left(  g\right)  :=\overline{X}$ are contained in
$H_{2}\left(  E^{g-1}\right)  $ so, by Proposition \ref{mainlemmaH2}(c),
(\ref{star}) holds for $X\ $and $X\left(  g\right)  $ as claimed.
\\[2mm]
Case II: $E^{g-1}$ intersects $Z_{g}. $ In this case it suffices to find meridians $\overline
{E}^{g-1},\overline{X}$ so that
\begin{equation}
\left \vert H_{1}\left(  Z_{g}\right)  \cap \overline{X}\right \vert <\left \vert
H_{1}\left(  Z_{g}\right)  \cap X\right \vert ,X\cup \overline{X}\subset
H_{2}\left(  \overline{E}^{g-1}\right)  \mathrm{\ and\ }\delta \left(
X\right)  =\delta \left(  \overline{X}\right)  \label{delta}%
\end{equation}
where $\lvert \cdot \rvert$ denotes number of components. Then, applying this step a finite number of times we reach the desired
meridian $X\left(  g\right)  .$

All components of $H_{1}\left(  Z_{g}\right)  \cap \left(  E^{g-1}\cup
X\right)  $ are (parallel) stripes. If the outermost stripe amongst all
stripes in $H_{1}\left(  Z_{g}\right)  \cap \left(  E^{g-1}\cup X\right)  $ is
a component of $H_{1}\left(  Z_{g}\right)  \cap X$ we may perform a stripe
elimination on $X$ using the outermost stripe which. as mentioned above is
not nested. The resulting meridian
$\overline{X}$ satisfies (\ref{delta}) because
\[
\left \vert H_{1}\left(  Z_{g}\right)  \cap \overline{X}\right \vert
+1\leq \left \vert H_{1}\left(  Z_{g}\right)  \cap X\right \vert .
\]
If the outermost stripe amongst all stripes in $H_{1}\left(  Z_{g}\right)
\cap \left(  E^{g-1}\cup X\right)  $ is a component of $H_{1}\left(
Z_{g}\right)  \cap E^{g-1}$ we may perform (repeatedly, if necessary) stripe
eliminations on $E^{g-1}$ (see Terminology \ref{terminology}(f)) to obtain a meridian $\overline{E}^{g-1}$ such that
the outermost stripe amongst all stripes in $H_{1}\left(  Z_{g}\right)
\cap \left(  \overline{E}^{g-1}\cup X\right)  $ is a component of $H_{1}\left(
Z_{g}\right)  \cap X$ and, hence, the previous case applies. This completes
the proof of Claim 2. 
\\[4mm]
In an identical way we may show that there
exists a $\left(  1,g-1\right)  $-separating meridian $X\left(  g-j\right)  $
in $H_{g}\setminus \mathbb{C}$ disjoint from $Z_{g},\ldots,Z_{g-j}$ for
$j=1,\ldots,g-2.$ such that (\ref{star}) holds for $X\left(  g-j+1\right)
\ $and $X\left(  g-j\right)  .$ At the last step, i.e., $j=g-2,$ we obtain a
meridian $X(2)$ disjoint from $Z_{g},\ldots,Z_{2}$ with
\[
\delta \left(  X\right)  =\delta \left(  X\left(  g\right)  \right)
=\cdots=\delta \left(  X\left(  g-j\right)  \right)  =\cdots=\delta \left(
X\left(  2\right)  \right)  .
\]
If $\left \vert X(2)\cap Z_{1}\right \vert =2$ then $X(2),Z_{1}$ belong to a
genus $1$ handlebody with two boundary spots so we may apply Proposition
\ref{mainlemmaH2}(b) for the meridians $X\left(  2\right)  $ and $Z_{1}$ to
conclude the proof of the Lemma.

Next assume $\left \vert X(2)\cap Z_{1}\right \vert =2k.$ By induction, it
suffices to find a meridian $\overline{X(2)}$ with $\left \vert \overline
{X(2)}\cap Z_{1}\right \vert \leq2k-2$ and $\delta \left(  \overline
{X(2)}\right)  =\delta \left(  X(2)\right)  .$
\newline 
Denote the components of
$X(2)\cap Z_{1}$ by $A_{1},A_{2},\ldots,A_{2k-1},A_{2k}.$ As $X(2)\cap
H_{g-1}(Z_{1})$ consists of $k$ stripes we may denote them by $s_{A_{1}A_{2}}%
,s_{A_{3}A_{4}},\dots,s_{A_{2k-1}A_{2k}}.$ Each stripe $s_{A_{2i-1}A_{2i}}$
splits $\partial Z_{1}$ into two subarcs. We need to know that for some stripe
$s_{A_{2j-1}A_{2j}},$ $j\in \left \{  1,\ldots k\right \}  $ one of the two
subarcs of $\partial Z_{1}$ determined by $s_{A_{2j-1}A_{2j}}$ does not
intersect any of the components $A_{2i-1},A_{2i},$ $i\in \left \{  1,\ldots
,k\right \}  \setminus \left \{  j\right \}  .$ This follows from the fact that
each stripe $s_{A_{2i-1}A_{2i}}$ separates the boundary of the spotted
$3-$ball $H_{g}\setminus \mathbb{Z}$ into $2$ components along with the fact
that the stripes $s_{A_{1}A_{2}},\dots,s_{A_{2k-1}A_{2k}}$ are pairwise
disjoint. Denote by $\tau \left(  Z_{1}\right)  $ the subarc of $\partial
Z_{1}$ not intersecting any of the components $A_{2i-1},A_{2i},$ $i\in \left \{
1,\ldots,k\right \}  \setminus \left \{  j\right \}  .$ Let $s_{0}$ be a stripe
connecting $A_{2j-1}$ with $A_{2j}$ so that both boundaries of $s_{0}$ are
isotopic to $\tau \left(  Z_{1}\right)  .$ We may construct a meridian
$\overline{X(2)}$ by replacing $s_{A_{2j-1}A_{2j}}$ by $s_{0}.$
The components $A_{2j-1},A_{2j}$ can be eliminated by an isotopy so, clearly,
$\overline{X(2)}$ satisfies $\left \vert \overline{X(2)}\cap Z_{1}\right \vert
\leq2k-2.$ Moreover, $\left \vert \overline{X(2)}\cap X(2)\right \vert =2,$
thus, $\overline{X(2)},X(2)$ belong to a genus $1$ handlebody with two spots
and by Proposition \ref{mainlemmaH2}(b), we conclude the proof of the Lemma.
\end{proof}

We will use a result of B. Wajnryb (see \cite{Wa}) which states that the
complex of cut systems is connected. Let $H_{g}$ be a handlebody of genus $g$
with a finite number of spots (that is, disjoint, distinguished disks) on its
boundary. The complex of cut systems is a $2-$dimensional complex with
vertices being cut systems of $H_{g}$ and two cut systems are connected by an
edge if they have $g-1$ meridians in common and the other two are disjoint.
The \textit{cut system complex} $\mathcal{CS}\left(  H_{g}\right)  $ of
$H_{g}$ is defined to be the $2-$dimensional flag complex determined by the
above mentioned vertices and edges, that is, if $\left \{  \mathbb{C}%
_{0},\mathbb{C}_{1},\mathbb{C}_{2}\right \}  $ is a set of vertices with the
property the edge $\left(  \mathbb{C}_{i},\mathbb{C}_{j}\right)  $ exists for
all $0\leq i,j\leq2,$ then $\left \{  \mathbb{C}_{0},\mathbb{C}_{1}%
,\mathbb{C}_{2}\right \}  $ is a $2-$simplex. The following is shown in
\cite{Wa}:

\begin{theoremb}
The cut system complex $\mathcal{CS}\left(  H_{g}\right)  $ is connected and
simply connected.
\end{theoremb}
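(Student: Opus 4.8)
The plan is to reprove the statement by adapting the Cerf-theoretic argument of Hatcher and Thurston for closed surfaces to the handlebody setting. The organizing dictionary is that a cut system $\mathbb{C}=\{C_1,\dots,C_g\}$ for $H_g$ is precisely the collection of cocore (meridian) disks of the $1$-handles in a handle decomposition of $H_g$ with a single $0$-handle and $g$ $1$-handles, equivalently the critical data of a Morse function $f\colon H_g\to[0,1]$ with $f^{-1}(1)=\partial H_g$, one index-$0$ and $g$ index-$1$ critical points, and no higher-index points (a handlebody carries no $2$- or $3$-handles). An elementary move between adjacent cut systems corresponds to a handle slide of one $1$-handle over another, a codimension-one event occurring when a gradient trajectory connects two index-$1$ critical points, while the defining $2$-cells of $\mathcal{CS}(H_g)$ are meant to record exactly the codimension-two degenerations of such families. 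Under this dictionary, connectivity and simple connectivity of $\mathcal{CS}(H_g)$ become, respectively, connectivity and simple connectivity of the space $\mathcal{F}$ of these generic Morse functions taken up to isotopy.

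For connectivity I would realize two given cut systems by functions $f_0,f_1\in\mathcal{F}$ and join them by a generic path $f_t$ inside the contractible space of all smooth functions $H_g\to[0,1]$ of the above boundary type. Along the path the only codimension-one events are handle slides and births or deaths of canceling $0/1$ pairs; each handle slide changes the associated cut system by a single elementary move, and the births/deaths can be normalized so that reading off the cocore systems along $f_t$ yields an explicit edge-path from $\mathbb{C}_0$ to $\mathbb{C}_1$. Alternatively one can argue purely combinatorially by placing the two systems in minimal position, so that they meet only in arcs, and reducing the total number of intersection arcs by surgering along an outermost subdisk; here the point requiring care is that each surgery can be arranged to preserve the cut-system condition (complement a ball) and to realize a single edge, but either route delivers connectivity.

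Simple connectivity is the crux, and it is where I expect the genuine difficulty. A loop $\gamma$ in $\mathcal{CS}(H_g)$ is first promoted to a loop $s\mapsto f_s$, $s\in S^1$, in $\mathcal{F}$; since the ambient space of smooth functions is contractible, this loop bounds a generic two-parameter family $f_{s,u}$. I would then invoke Cerf's classification of the bifurcation locus of a generic $2$-parameter family: because only indices $0$ and $1$ occur, the codimension-two strata reduce to the swallowtail, the beak-to-beak, and the transverse intersection of two independent codimension-one crossings. The heart of the matter is to verify that each of these local models, translated through the dictionary above, is exactly one of the prescribed $2$-cells of $\mathcal{CS}(H_g)$ — a commuting square of two moves with disjoint support, or a small triangle of three pairwise-adjacent cut systems — so that contractibility of $f_{s,u}$ fills $\gamma$ by the defining cells. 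The main obstacle, and the technically heaviest step, is controlling the handle-slide degenerations peculiar to the handlebody: one must normalize $f_{s,u}$ near the $0/1$ interactions and track how the cocore disk of a sliding $1$-handle sweeps across the boundary surface, in order to certify that no relations beyond the listed squares and triangles are forced. Once this matching of codimension-two events with the defining $2$-cells is established, both assertions of Theorem~B follow simultaneously.
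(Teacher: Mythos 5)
First, a point of reference: the paper itself offers no proof of Theorem B --- it is quoted from Wajnryb \cite{Wa}, whose proof is combinatorial (surgery and intersection--reduction arguments on disk systems) rather than Cerf-theoretic, and in fact the paper only ever \emph{uses} the connectivity half (Lemma \ref{lemmadeltaadjacent} handles a single edge of $\mathcal{CS}(H_g)$, and connectivity strings such edges together). So you are proposing a genuinely different route, namely the Hatcher--Thurston program transplanted to handlebodies. As written, however, it has gaps that are not cosmetic. The step you yourself label the ``main obstacle'' --- verifying that every codimension-two degeneration of a generic two-parameter family translates into a cell of the complex --- is not a technical afterthought: it is the entire content of simple connectivity, and deferring it means nothing has yet been proved. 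Moreover, the bifurcation analysis rests on an unjustified (and in general false) simplification: a generic path or disk of functions joining minimal Morse functions on $H_g$ does \emph{not} stay among functions with only index-$0$ and index-$1$ critical points. Generic families also undergo births and deaths of cancelling $1/2$- and $2/3$-pairs; the remark that ``a handlebody carries no $2$- or $3$-handles'' concerns the minimal handle structure, not what occurs in families. While extra handles are present, the cocores of the $1$-handles do not even form a cut system (their complement is disconnected), so ``reading off an edge-path'' already requires the normalization, reordering and cancellation arguments that constitute the bulk of Cerf/Hatcher--Thurston-type proofs; correspondingly, your list of codimension-two models (swallowtail, beak-to-beak, independent crossings) omits all strata involving the higher-index points and the slides-at-birth-death events.

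Second, there is a mismatch between what your method would prove and the statement as the paper formulates it. By definition, $\mathcal{CS}(H_g)$ is a \emph{flag} complex, and one checks easily that its only $2$-simplices are triangles of the form $\{C,R\},\{C',R\},\{C'',R\}$ with $R$ a common set of $g-1$ meridians and $C,C',C''$ pairwise disjoint. A ``commuting square of two moves with disjoint support,'' $\{C_1,C_2,R\}\rightarrow\{C_1',C_2,R\}\rightarrow\{C_1',C_2',R\}\rightarrow\{C_1,C_2',R\}$, is \emph{not} a $2$-cell of this complex: its diagonal vertex pairs share only $g-2$ meridians, so the square has no diagonal edges and bounds no pair of triangles. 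Filling such a $4$-cycle inside the flag complex is possible but requires a genuine construction --- e.g.\ auxiliary cut systems built from a meridian encircling the two moved disks --- and this is exactly the kind of argument your proposal never supplies. Consequently, even a completed Cerf analysis would establish simple connectivity of a Hatcher--Thurston/Wajnryb-type complex whose $2$-cells include squares (and whatever other polygons the codimension-two strata force), leaving you still owing the comparison between that complex and the flag complex $\mathcal{CS}(H_g)$ of the statement. Your combinatorial fallback for connectivity (outermost-disk surgery, checking that each surgery preserves the cut-system condition and realizes an edge) is essentially the standard argument and could be completed; it is the simple-connectivity half where the proposal, as it stands, does not yet contain a proof.
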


Since for any given $\left(  1,g-1\right)  $-separating meridian $X$ we may
pick a cut system $\mathbb{C}_{X}$ with $X\subset H_{g}\setminus \mathbb{C}%
_{X},$ the proof of Theorem \ref{maindelta} follows from Lemma
\ref{lemmadelta}, Theorem B and the following:

\begin{lemma}
\label{lemmadeltaadjacent}Let $X,Y$ be $\left(  1,g-1\right)  $-separating
meridians with $\delta \left(  X\right)  =\delta \left(  Y\right)  \equiv \Delta$
and
\[
\mathbb{C}_{X}=\left \{  \Delta,C_{X},C_{3},\ldots,C_{g}\right \}
,\mathbb{C}_{Y}=\left \{  \Delta,C_{Y},C_{3},\ldots,C_{g}\right \}
\]
cut systems connected by an edge in the cut system complex $\mathcal{CS}%
\left(  H_{g}\right)  $ such that $X\subset H_{g}\setminus \mathbb{C}_{X}$ and
$Y\subset H_{g}\setminus \mathbb{C}_{Y}.$ Then for any automorphism $\phi$ of
$\mathcal{SM}\left(  H_{g}\right)  ,$ $\delta \left(  \phi \left(  X\right)
\right)  =\delta \left(  \phi \left(  Y\right)  \right)  .$
\end{lemma}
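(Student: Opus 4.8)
The plan is to bridge $X$ and $Y$ through a single auxiliary meridian to which Lemma~\ref{lemmadelta} can be applied twice. Concretely, I would look for a $\left(1,g-1\right)$-separating meridian $W$ with $\delta\left(W\right)=\Delta$ that is disjoint from \emph{both} cut systems $\mathbb{C}_{X}$ and $\mathbb{C}_{Y}$. Granting such a $W$, the conclusion is immediate: since $X$ and $W$ are both disjoint from $\mathbb{C}_{X}$ and satisfy $\delta\left(X\right)=\delta\left(W\right)=\Delta$, Lemma~\ref{lemmadelta} applied to the cut system $\mathbb{C}_{X}$ gives $\delta\left(\phi\left(X\right)\right)=\delta\left(\phi\left(W\right)\right)$; likewise $W$ and $Y$ are both disjoint from $\mathbb{C}_{Y}$ with $\delta\left(W\right)=\delta\left(Y\right)=\Delta$, so Lemma~\ref{lemmadelta} applied to $\mathbb{C}_{Y}$ gives $\delta\left(\phi\left(W\right)\right)=\delta\left(\phi\left(Y\right)\right)$. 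Chaining the two equalities yields $\delta\left(\phi\left(X\right)\right)=\delta\left(\phi\left(Y\right)\right)$, as desired. Thus the entire lemma reduces to the \emph{existence} of $W$.

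I expect this existence to be the only genuinely delicate point, and it hinges on understanding how $C_{X}$ and $C_{Y}$ sit relative to $\Delta$. Since $\delta\left(W\right)=\Delta$, any candidate $W$ has the form $\Delta\cup_{\sigma}\Delta$ for a simple closed curve $\sigma$ meeting $\Delta$ transversely once (as in the proof of Lemma~\ref{lemmadelta}), and requiring $W$ disjoint from $\mathbb{C}_{X}\cup\mathbb{C}_{Y}$ amounts to finding such a $\sigma$ that is disjoint from $C_{X},C_{Y},C_{3},\ldots,C_{g}$. A priori a curve meeting $\Delta$ once could be forced to meet $C_{X}$ or $C_{Y}$, so the key structural fact I would establish first is that $C_{X}$ and $C_{Y}$ are \emph{parallel}. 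Indeed, both $\mathbb{C}_{X}$ and $\mathbb{C}_{Y}$ extend the common subcollection $\left\{\Delta,C_{3},\ldots,C_{g}\right\}$ to a cut system, so in the solid torus $V=H_{g}\setminus\left\{\Delta,C_{3},\ldots,C_{g}\right\}$ both $C_{X}$ and $C_{Y}$ are meridian disks; being disjoint meridian disks of a solid torus they are isotopic, and the cobounding annulus can be taken disjoint from $\Delta$. Equivalently, in the genus-$2$ piece $H_{g}\setminus\left\{C_{3},\ldots,C_{g}\right\}$ the meridian $\Delta$ belongs to one handle while $C_{X}$ and $C_{Y}$ are parallel meridians of the complementary handle. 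Note also that $C_{Y}$ cannot instead be parallel to $\Delta$, for then $\left\{\Delta,C_{Y},C_{3},\ldots,C_{g}\right\}$ would fail to cut $H_{g}$ to a ball, contradicting that $\mathbb{C}_{Y}$ is a cut system.

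With this in hand the construction of $W$ becomes routine: I would take $\sigma$ to be a longitude of the handle carrying $\Delta$, meeting $\Delta$ once and lying in a neighborhood of that handle, hence disjoint from the annulus cobounded by $C_{X},C_{Y}$ and from $C_{3},\ldots,C_{g}$. Then $W:=\Delta\cup_{\sigma}\Delta=\partial N\left(\Delta\cup\sigma\right)$ is a $\left(1,g-1\right)$-meridian with $\delta\left(W\right)=\Delta$, and a sufficiently thin regular neighborhood guarantees that $W$ is disjoint from $\Delta,C_{X},C_{Y},C_{3},\ldots,C_{g}$, i.e. from both $\mathbb{C}_{X}$ and $\mathbb{C}_{Y}$. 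This completes the construction of $W$, and the two applications of Lemma~\ref{lemmadelta} described above then finish the proof. Everything apart from the parallelism of $C_{X}$ and $C_{Y}$ is bookkeeping with regular neighborhoods together with two invocations of the already-established Lemma~\ref{lemmadelta}.
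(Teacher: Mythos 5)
Your reduction is fine as far as it goes, and it is in fact the first half of the paper's own proof: when a single meridian $W=\Delta\cup_{\sigma}\Delta$ with $\delta(W)=\Delta$ exists disjoint from both $\mathbb{C}_{X}$ and $\mathbb{C}_{Y}$, two applications of Lemma \ref{lemmadelta} chain exactly as you describe (the paper calls this meridian $C_{X,Y}$). The gap is your existence claim for $W$: it is false in general. Cut $H_{g}$ along all of $\Delta,C_{X},C_{Y},C_{3},\ldots,C_{g}$. You obtain two $3$-balls $B^{+},B^{-}$, and each of them contains exactly one copy of $C_{X}$ and one copy of $C_{Y}$ (because $\mathbb{C}_{Y}$ is a cut system, $C_{X}$ is a properly embedded disk in the ball $H_{g}\setminus\mathbb{C}_{Y}$ and so separates it, and symmetrically for $C_{Y}$); but the two copies $\Delta^{+},\Delta^{-}$ of $\Delta$ may lie either in the same ball or in different balls, and \emph{both} configurations occur. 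In the second configuration, any simple closed curve $\sigma$ meeting $\Delta$ transversely in one point and disjoint from $C_{X},C_{Y},C_{3},\ldots,C_{g}$ would, after cutting, become a single connected arc with one endpoint on $\Delta^{+}\subset B^{+}$ and the other on $\Delta^{-}\subset B^{-}$, which is impossible; hence no admissible $W$ exists at all. The precise point where your argument breaks is the sentence ``the cobounding annulus can be taken disjoint from $\Delta$'': the complement of $\partial C_{X}\cup\partial C_{Y}$ in the boundary of the solid torus $V=H_{g}\setminus\{\Delta,C_{3},\ldots,C_{g}\}$ consists of two annuli, and in the bad configuration each of them carries one of the two spots $\Delta^{+},\Delta^{-}$, so neither parallelism region avoids $\Delta$. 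Your picture of $\Delta$ ``belonging to one handle'' with a longitude disjoint from $C_{X}\cup C_{Y}$ silently assumes the good configuration.

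This dichotomy is exactly why the paper's proof has two cases, and in the case you missed it uses a genuinely different device. There one constructs two \emph{disjoint} simple closed curves $\sigma_{X},\sigma_{Y}$, each crossing $\Delta$ once, with $\sigma_{X}$ avoiding $C_{X},C_{3},\ldots,C_{g}$ (it necessarily crosses $C_{Y}$) and $\sigma_{Y}$ avoiding $C_{Y},C_{3},\ldots,C_{g}$ (it necessarily crosses $C_{X}$); these are assembled from matching arcs on $\partial B^{+}$ and $\partial B^{-}$. Setting $X_{1}=\Delta\cup_{\sigma_{X}}\Delta$ and $Y_{1}=\Delta\cup_{\sigma_{Y}}\Delta$, Lemma \ref{lemmadelta} gives $\delta(\phi(X))=\delta(\phi(X_{1}))$ and $\delta(\phi(Y))=\delta(\phi(Y_{1}))$, but one still needs $\delta(\phi(X_{1}))=\delta(\phi(Y_{1}))$, and this cannot come from Lemma \ref{lemmadelta} since $X_{1}$ and $Y_{1}$ are not disjoint from a common one of the two cut systems. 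Instead, since $\sigma_{X}\cap\sigma_{Y}=\emptyset$, the meridians $X_{1},Y_{1}$ meet in two points and lie in a common genus $1$ handlebody with two spots, so Proposition \ref{mainlemmaH2}(b) supplies the missing equality. Some input of this kind, beyond Lemma \ref{lemmadelta}, is unavoidable in that case, and your proposal has nothing in its place.
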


\begin{proof}
Cutting $H_{g}$ along $\mathbb{C}_{X}\cup \left \{  C_{Y}\right \}  $ we obtain
two components ($3-$balls) with a total of $2g+2$ spots (each meridian gives
rise to $2$ spots).

If the $2$ spots corresponding to $\Delta$ lie on the same component, we may
find a simple closed curve $\sigma \subset \partial H_{g}$ intersecting
$\partial \Delta$ at a single point and, in addition,
\[
\sigma \cap \left(  C_{X}\cup C_{Y}\cup C_{3}\cup \cdots \cup C_{g}\right)
=\emptyset.
\]
Then the meridian $C_{X,Y}=\Delta \cup_{\sigma}\Delta$ clearly has
$\delta \left(  C_{X,Y}\right)  =\Delta$ and satisfies
\[
C_{X,Y}\subset H_{g}\setminus \mathbb{C}_{X}\mathrm{\  \ and\  \ }C_{X,Y}\subset
H_{g}\setminus \mathbb{C}_{Y}.
\]
By Lemma \ref{lemmadelta}, $\delta \left(  \phi \left(  X\right)  \right)
=\delta \left(  \phi \left(  C_{X,Y}\right)  \right)  =\delta \left(  \phi \left(
Y\right)  \right)  .$

Suppose now that the $2$ spots corresponding to $\Delta$ lie on distinct
components of $H_{g}$ $\setminus \left(  \mathbb{C}_{X}\cup \left \{
C_{Y}\right \}  \right)  .$ Denote them by $B^{+}$ and $B^{-}.$ Then $B^{+}$
has spots $\Delta^{+},C_{X}^{+}$ and $C_{Y}^{+}$ corresponding to
$\Delta,C_{X}$ and $C_{Y}$ respectively and similarly for $B^{-}.$ Pick
disjoint arcs $\sigma_{X}^{+}$ and $\sigma_{Y}^{+}$ in $\partial B^{+}$ with
endpoints on $\Delta^{+},C_{X}^{+}$ and $\Delta^{+},C_{Y}^{+}$ respectively.
Pick disjoint arcs $\sigma_{X}^{-}$ and $\sigma_{Y}^{-}$ in $\partial B^{-}$
so that $\sigma_{X}^{-}$ (resp. $\sigma_{Y}^{-}$) has the same endpoints with
$\sigma_{X}^{+}$ (resp. $\sigma_{Y}^{+}$). Then $\sigma_{X}=\sigma_{X}^{+}%
\cup \sigma_{X}^{-}$ and $\sigma_{Y}=\sigma_{Y}^{+}\cup \sigma_{Y}^{-}$ are
simple closed curves in
\[
H_{g}\setminus \left(  C_{Y}\cup C_{3}\cup \cdots \cup C_{g}\right)
,\mathrm{\  \ and\ }H_{g}\setminus \left(  C_{X}\cup C_{3}\cup \cdots \cup
C_{g}\right)
\]
respectively, each intersecting $\Delta$ once. Then, for the meridians
$X_{1}=\Delta \cup_{\sigma_{X}}\Delta$ and $Y_{1}=\Delta \cup_{\sigma_{Y}}%
\Delta$ we clearly have
\[
\delta \left(  X_{1}\right)  =\delta \left(  Y_{1}\right)  =\Delta
\mathrm{\  \ and\  \ }X_{1}\subset H_{g}\setminus \mathbb{C}_{Y},Y_{1}\subset
H_{g}\setminus \mathbb{C}_{X}.
\]
Hence by Lemma \ref{lemmadelta},
\[
\delta \left(  \phi \left(  X\right)  \right)  =\delta \left(  \phi \left(
X_{1}\right)  \right)  \mathrm{\  \ and\  \ }\delta \left(  \phi \left(  Y\right)
\right)  =\delta \left(  \phi \left(  Y_{1}\right)  \right)  .
\]
Moreover, as $\sigma_{X}\cap \sigma_{Y}=\emptyset,$ $\sigma_{Y}$ intersects
$\partial X_{1}$ at $2$ points, thus, there exists a genus $1$ handlebody with
two spots containing both $X_{1}$ and $Y_{1}.$ By Proposition
\ref{mainlemmaH2}(b), $\delta \left(  \phi \left(  X_{1}\right)  \right)
=\delta \left(  \phi \left(  Y_{1}\right)  \right)  $ and, thus, $\delta \left(
\phi \left(  X\right)  \right)  =\delta \left(  \phi \left(  Y\right)  \right)  $
as desired.
This completes the proof of the Lemma and, in turn, of Theorem \ref{maindelta}.
\end{proof}

We may now extend an arbitrary automorphism $\phi:\mathcal{SM}\left(
H_{g}\right)  \rightarrow \mathcal{SM}\left(  H_{g}\right)  $ to an
automorphism $\phi_{M}:\mathcal{M}\left(  H_{g}\right)  \rightarrow
\mathcal{M}\left(  H_{g}\right)  $ on the whole complex of meridians
$\mathcal{M}\left(  H_{g}\right)  .$

\begin{definition}
Let $D\in \mathcal{M}\left(  H_{g}\right)  $ be a non-separating meridian. Pick
any $\left(  1,g-1\right)  $-separating meridian $Z$ with $\delta \left(
Z\right)  =D.$ Define
\[
\phi_{M}:\mathcal{M}\left(  H_{g}\right)  \rightarrow \mathcal{M}\left(
H_{g}\right)
\]
by $\phi_{M}\left(  D\right)  =\delta \left(  \phi \left(  Z\right)  \right)  .$
By Theorem \ref{maindelta}, $\phi_{M}$ is, as a map, well defined.
\end{definition}

Surjectivity of $\phi$ clearly implies surjectivity of $\phi_{M}.$ The inverse
automorphism $\phi^{-1}$ can be extended as above to an automorphism $\left(
\phi^{-1}\right)  _{M}:\mathcal{M}\left(  H_{g}\right)  \rightarrow
\mathcal{M}\left(  H_{g}\right)  $ which satisfies $\left(  \phi^{-1}\right)
_{M}\circ \phi_{M}=\mathrm{Id}_{\mathcal{M}\left(  H_{g}\right)  }.$ Thus, the
map $\phi_{M}$ is injective and surjective. In the sequel we will suppress the
lower index in $\phi_{M}.$

\begin{proposition}
The map $\phi:\mathcal{M}\left(  H_{g}\right)  \rightarrow \mathcal{M}\left(
H_{g}\right)  $ is the unique complex automorphism of $\mathcal{M}\left(
H_{g}\right)  $ extending the given automorphism of $\mathcal{SM}\left(
H_{g}\right)  .$
\end{proposition}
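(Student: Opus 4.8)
The plan is to show that the map $\phi:\mathcal{M}(H_g)\to\mathcal{M}(H_g)$ constructed above is a simplicial automorphism, i.e. a bijection preserving adjacency in both directions, and then to deduce uniqueness from the fact that any extension must agree with the already-geometric action on separating meridians. Bijectivity is already in hand: surjectivity of $\phi$ on $\mathcal{SM}(H_g)$ gives surjectivity of $\phi_M$, and the relation $(\phi^{-1})_M\circ\phi_M=\mathrm{Id}$ established just before the statement gives injectivity. Thus the essential content to verify is that $\phi_M$ preserves simplices, i.e. that disjointness of meridians is preserved in both directions; since $\phi_M$ is a bijection with inverse of the same form, it suffices to check one direction, namely that $\phi_M$ maps disjoint meridians to disjoint meridians.

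First I would reduce preservation of adjacency to the case of pairs of meridians, because $\mathcal{M}(H_g)$ is a flag complex: a collection of meridians spans a simplex exactly when they are pairwise disjoint, so if $\phi_M$ preserves pairwise disjointness then it carries simplices to simplices. For a pair of meridians I would split into three cases according to type. If both are separating, disjointness is preserved because $\phi_M$ restricts to the given automorphism $\phi$ of $\mathcal{SM}(H_g)$. If one is separating, say $E$, and the other is non-separating, say $D$, I would realize $D$ as $\delta(Z)$ for a suitable $(1,g-1)$-separating meridian $Z$ chosen disjoint from $E$ (possible since $D$ and $E$ are disjoint): then $E$ and $Z$ are disjoint separating meridians, so $\phi(E)$ and $\phi(Z)$ are disjoint, and hence $\phi(E)$ is disjoint from $\delta(\phi(Z))=\phi_M(D)$, which lies inside $H_1(\phi(Z))$.

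The case that will require the most care is two non-separating meridians $D_1,D_2$ that are disjoint. Here the plan is to pick $(1,g-1)$-separating meridians $Z_1,Z_2$ with $\delta(Z_i)=D_i$ chosen so that $Z_1,Z_2$ are themselves disjoint and each $Z_i$ is disjoint from $D_{3-i}$; such a choice is possible because $D_1,D_2$ are disjoint non-separating meridians, so one can thicken each into a genus-$1$ piece avoiding the other. Then $\phi(Z_1),\phi(Z_2)$ are disjoint separating meridians, and one argues that $\phi_M(D_i)=\delta(\phi(Z_i))$ lives in $H_1(\phi(Z_i))$, whence $\phi_M(D_1)$ and $\phi_M(D_2)$ are disjoint. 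The main obstacle will be verifying that the auxiliary separating meridians $Z_1,Z_2$ can always be positioned with the required simultaneous disjointness, and that well-definedness (Theorem~\ref{maindelta}) lets us replace any initial choice by this convenient one without changing $\phi_M(D_i)$; this bookkeeping, together with the symmetric treatment for $\phi_M^{-1}$, is where the argument is most delicate.

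Finally, for uniqueness I would observe that any automorphism $\Psi$ of $\mathcal{M}(H_g)$ extending the given automorphism of $\mathcal{SM}(H_g)$ must send a non-separating meridian $D$ to a meridian disjoint from $\Psi(Z)=\phi(Z)$ for every $(1,g-1)$-separating $Z$ with $\delta(Z)=D$; since $\delta(\phi(Z))$ is the unique non-separating meridian in $H_1(\phi(Z))$ disjoint from $\phi(Z)$, we are forced to have $\Psi(D)=\delta(\phi(Z))=\phi_M(D)$, so $\Psi=\phi_M$. This pins down the extension uniquely and completes the proof.
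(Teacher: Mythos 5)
Your construction of the automorphism itself is correct and is essentially the paper's own argument: pairs of separating meridians are handled by restriction to $\phi$; for a disjoint pair consisting of a separating $E$ and a non-separating $D$ you thicken $D$ to a $(1,g-1)$-meridian $Z$ disjoint from $E$ and use $\phi_{M}(D)=\delta(\phi(Z))\subset H_{1}(\phi(Z))$; for two disjoint non-separating meridians you take disjoint thickenings $Z_{1},Z_{2}$. This is exactly how the paper proves the forward implications of (\ref{twosep}) and (\ref{oneseponenon}). Your treatment of the backward implications is a mild and legitimate streamlining: since Theorem \ref{maindelta} applies equally to $\phi^{-1}$, the extension $(\phi^{-1})_{M}$ is well defined, your forward argument applies to it verbatim, and the identity $(\phi^{-1})_{M}\circ\phi_{M}=\mathrm{Id}$ (established just before the statement) converts forward preservation into backward preservation. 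The paper instead proves the backward implication for the mixed pair directly, reducing to $(1,g-1)$-meridians via Lemma \ref{preserves}(c); both routes work, and yours avoids that case analysis.

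The uniqueness argument, however, contains a genuine gap. From ``$\Psi(D)$ is disjoint from $\phi(Z)$'' you cannot conclude ``$\Psi(D)=\delta(\phi(Z))$'': disjointness only places $\Psi(D)$ in $H_{1}(\phi(Z))$ or in $H_{g-1}(\phi(Z))$, and the second alternative is wide open, since $H_{g-1}(\phi(Z))$ contains infinitely many non-separating meridians. The clause ``the unique non-separating meridian in $H_{1}(\phi(Z))$'' only helps once you know $\Psi(D)$ lies in $H_{1}(\phi(Z))$, which is precisely what is unproved. Quantifying over all $Z$ with $\delta(Z)=D$ is the right instinct, but to exploit it you need two further steps that your write-up does not supply: (i) the family $\{\phi(Z):\delta(Z)=D\}$ equals the full family $\{Z^{\prime}:\delta(Z^{\prime})=\phi_{M}(D)\}$ (this uses surjectivity of $\phi$ together with Theorem \ref{maindelta} applied to $\phi^{-1}$), and (ii) the geometric fact that the only non-separating meridian disjoint from \emph{every} $(1,g-1)$-meridian $Z^{\prime}$ with $\delta(Z^{\prime})=C$ is $C$ itself. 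Fact (ii) is true but needs an argument: given a non-separating $D^{\prime\prime}\neq C$ disjoint from $C$, one must produce a curve $\alpha$ with $\left\vert \alpha\cap C\right\vert =1$ such that $C\cup_{\alpha}C$ cannot be isotoped off $D^{\prime\prime}$ (for instance, if a first choice $\alpha_{0}$ satisfies $i(\alpha_{0},\partial D^{\prime\prime})=0$, replace it by its image under a Dehn twist along a curve disjoint from $\partial C$ that essentially crosses $\partial D^{\prime\prime}$). You should also record that $\Psi$ sends non-separating meridians to non-separating ones (immediate, since $\Psi$ is injective and $\Psi$ restricted to $\mathcal{SM}(H_{g})$ already surjects onto $\mathcal{SM}(H_{g})$), as otherwise the phrase ``non-separating meridian'' does not even apply to $\Psi(D)$. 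For comparison, the paper's uniqueness proof has the workable quantifier order: assuming $\phi^{\prime}(D)\neq\phi(D)$, it produces one $Z$ with $\delta(Z)=\phi(D)$ meeting $\phi^{\prime}(D)$ and pulls back by $(\phi^{\prime})^{-1}=\phi^{-1}$ to contradict $\delta(\phi^{-1}(Z))=D$; the intersection claim there is asserted as ``clear'' rather than proved, but it is exactly fact (ii), chosen after $\phi^{\prime}(D)$ is given, and that is the step your version must also establish.
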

 
\begin{proof}
Let $D,E$ be any two meridians. We must show
\begin{equation}
D\cap E=\emptyset \Leftrightarrow \phi \left(  D\right)  \cap
\phi \left(  E\right)  =\emptyset. \label{twosep}%
\end{equation}
We will only show the "if" direction because, as $\phi$ has an inverse, the converse follows in an identical way. Clearly, if both $D,E$ are separating we have nothing to show. We next examine two cases: 
\\[3mm]
Case I: $D$ is non-separating and $E$ separating. If $E$ is a 
$\left(  1,g-1\right)  $-separating meridian with $\delta (E) =D$ then,
by definition of $\phi :\mathcal{M}\left(  H_{g}\right)  \rightarrow \mathcal{M}\left(H_{g}\right) $ we have $\phi (D) = \delta (\phi (E))$ and, clearly,  
$\phi \left(  D\right)  \cap \phi \left(  E\right)  =\emptyset .$ If either 
$\delta (E) \neq D$ or, $E$ is  a $\left(  k,g-k\right)  $-separating meridian with $2\leq k\leq g-1 $ we may find a $\left(  1,g-1\right)  $-separating 
meridian $X$ with $\delta (X) =D$ and $X\cap E = \emptyset .$ Then 
$\phi \left(  X\right)  \cap \phi \left(  E\right)  =\emptyset $ which implies that $\phi \left(  D\right)  \cap \phi \left(  E\right)  =\emptyset .$ 
\\[3mm]
Case II: $D$ and $E$ are both non-separating. 
We may find two disjoint separating meridians  $A,B$  which bound a genus $2-$handlebody $H_{2}\left(  A,B\right)  $ with
two spots $A,B$ (compare with discussion before Proposition \ref{mainlemmaH2}) such that $D,E \subset H_{2}\left(  A,B\right) . $ Let $X\subset  H_{2}\left(  A,B\right)$ be a $\left(  1,g-1\right)  $-separating meridian with $\delta (X) =D.$ Note that if the union $D\cup E$ separates $H_g$ the intersection $X\cap E$
is necessarily non-empty. In the proof of Proposition \ref{mainlemmaH2}(a), after composing $\phi$ with a geometric automorphism so that $A,B$ and $X$ are fixed, we have shown that $\overline{\phi} (\partial D) =\partial D$ where $\overline{\phi}$ is the induced automorphism of the arc complex. Thus, $\phi$ and $\overline{\phi}$ not only agree on the separating meridians in $H_{2}\left(  A,B\right), $  as shown in Lemma \ref{ffpaula}, but also $\overline{\phi} (\partial D) = \partial \phi(D).$
Since $\overline{\phi}$ is geometric the desired property for 
 $\phi \left(  D\right)  $ and $\phi \left(  E\right)  $ follows.
 
For uniqueness, let $\phi^{\prime}:\mathcal{M}\left(  H_{g}\right)
\rightarrow \mathcal{M}\left(  H_{g}\right)  $ be a complex automorphism such
that $\phi^{\prime}=\phi$ on $\mathcal{SM}\left(  H_{g}\right)  $ and assume
$\phi^{\prime}\left(  D\right)  \neq \phi \left(  D\right)  $ for some
non-separating meridian $D.$ Choose a simple closed curve $\alpha$ with
$\left \vert \alpha \cap \phi \left(  D\right)  \right \vert =1.$ Then, for the
separating meridian $Z=\phi \left(  D\right)  \cup_{\alpha}\phi \left(
D\right)  $ we clearly have $\delta \left(  Z\right)  =\phi \left(  D\right)  $
and $Z\cap \phi^{\prime}\left(  D\right)  \neq \emptyset.$ The latter implies
that the meridians $\left(  \phi^{\prime}\right)  ^{-1}\left(  Z\right)
=\phi^{-1}\left(  Z\right)  $ and $\left(  \phi^{\prime}\right)  ^{-1}\left(
\phi^{\prime}\left(  D\right)  \right)  =D$ intersect. This is a contradiction
because, by definition of $\phi,$ $\delta \left(  \phi^{-1}\left(  Z\right)
\right)  =D.$
\end{proof}

\end{document}